\newcommand{\dist}{\text{dist}} 
\newcommand{\diam}{\text{diam}}
\newcommand{\trace}{\text{trace}}
\newcommand{\R}{{\mathbb R}} 
\newcommand{\Z}{{\mathbb Z}}
\newcommand{\T}{{\mathbb T}}
\newcommand{\e}{\varepsilon}
\newcommand{\eps}{\epsilon}
\newcommand{\p}{\partial}
\renewcommand{\div}{\mbox{div}\,}
\newcommand{\norm}[1]{\left\Vert#1\right\Vert}
\newcommand{\dx}{\, dx}
\DeclareMathOperator*{\osc}{osc}
\newenvironment{myindentpar}[1]%
{\begin{list}{}%
         {\setlength{\leftmargin}{#1}}%
         \item[]%
}
{\end{list}}
\theoremstyle{plain}
\newtheorem{thm}{Theorem}[section]
\newtheorem{cor}[thm]{Corollary}
\newtheorem{lem}[thm]{Lemma}
\newtheorem{prop}[thm]{Proposition}
\newtheorem{rem}[thm]{Remark}
\theoremstyle{definition}
\newtheorem{defn}[thm]{Definition}
\def\l@subsection{\@tocline{2}{0pt}{2.5pc}{5pc}{}}
\numberwithin{equation}{section}
\begin{document}
  
\title[Semigeostrophic equations and linearized Monge-Amp\`ere equation]{ 
H\"older regularity of the 2D dual semigeostrophic equations via analysis of linearized Monge-Amp\`ere equations
}
\date{}
\author{Nam Q. Le}
\address{Department of Mathematics, Indiana University, Bloomington, IN 47405, USA}
\email{nqle@indiana.edu}
\thanks{The research of the author was supported in part by NSF grant DMS-1500400.}
\subjclass[2000]{35B65, 35J08, 35J96, 35J70, 35Q35, 35Q86, 86A10}
\keywords{\tiny Semigeostrophic equations, Linearized Monge-Amp\`ere equation, Polar factorization, H\"older estimate}

\begin{abstract} 
We obtain the H\"older regularity of time derivative of solutions to
the dual semigeostrophic equations in two dimensions when the initial potential density is bounded away from zero and infinity. 
Our main tool is an interior H\"older estimate in two dimensions for an inhomogeneous linearized Monge-Amp\`ere equation with right hand side being the divergence
of a bounded vector field. As a further application of our H\"older estimate,
we prove the H\"older regularity of the polar factorization for time-dependent maps in two dimensions with densities bounded away from zero and infinity. 
Our applications improve previous work by G. Loeper who considered the cases of densities sufficiently close to a positive constant.
\end{abstract}

\maketitle


\setcounter{equation}{0}

\section{Introduction and statement of the main results}
In this paper, we obtain the H\"older regularity of time derivative of solutions to
the dual semigeostrophic equations in two dimensions when the initial potential density is bounded away from zero and infinity; see Theorem \ref{GSthm}. 
Our main tool is an interior H\"older estimate in two dimensions for an inhomogeneous linearized Monge-Amp\`ere equation with right hand side being the divergence
of a bounded vector field
when the Monge-Amp\`ere measure is only assumed to be bounded between two positive 
constants; see Theorem \ref{Holder_int_thm}. As a further application of our H\"older estimate,
we prove the H\"older regularity of the polar factorization for time-dependent maps in two dimensions with densities bounded away from zero and infinity; see Theorems \ref{H_thm_1} and \ref{peri_thm}. 
Our applications improve previous work by Loeper \cite{Loe} who considered the cases of densities sufficiently close to a positive constant.
\subsection{The dual semigeostrophic equations on $\T^2$}
\label{SG_sec1}
The semigeostrophic equations are a simple model used in meteorology to describe
large scale atmospheric flows. As explained for example in Benamou-Brenier \cite[Section 2.2]{BB},
Loeper \cite[Section 1.1]{Loe2}, and Cullen \cite{Cu}, the semigeostrophic
equations can be derived from the three-dimensional incompressible Euler equations, with
Boussinesq and hydrostatic approximations, subject to a strong Coriolis force. Since
for large scale atmospheric flows the Coriolis force dominates the advection term,
the flow is mostly bi-dimensional. 

Here we focus on the dual semigeostrophic equations. Note that, using solutions to the dual equations together with the $W^{2,1}$ regularity
for Aleksandrov solutions to the Monge-Amp\`ere equations obtained by De Philippis and Figalli \cite{DPF}, Ambrosio-Colombo-De Philippis-Figalli \cite{ACDF12} established global in time distributional solutions to the 
original semigeostrophic equations on the two dimensional torus. For more on the Monge-Amp\`ere equations and Aleksandrov solutions, see the books by Figalli \cite{Fi} and Guti\'errez \cite{G01}.

The {\it dual equations} 
of the semigeostrophic 
equations on the two dimensional torus $\T^2=\R^2/\Z^2$ are the following system of nonlinear transport equations
\begin{equation}\label{GS:dual}
\left\{\begin{array}{rll}
\partial_t \rho_t(x) + \div (\rho_t(x) U_t(x)) &=0 & (t,x)\in [0,\infty)\times\T^2,\\
U_t(x) &= (x-\nabla P_t^{\ast}(x) )^\perp & (t,x)\in [0,\infty)\times\T^2,\\
\det D^2 P_t^{\ast}(x) &= \rho_t(x)& (t,x)\in [0,\infty)\times\T^2,\\
P_t^{\ast}(x) \text{ convex }&   & (t,x)\in \T^2,\\
\rho_0(x)&= \rho^0(x)& x\in \T^2
\end{array}\right.
\end{equation}
for $(\rho_t, P_t^{\ast})$
with the boundary condition
\begin{equation}
\label{SGbdr}
 P_t^{\ast}-|x|^2/2 ~\text{is}~ \Z^2- \text{periodic}.
\end{equation}
Here the initial potential density $\rho^0$ is a probability measure on $\T^2$.
Throughout, we use $w^\perp$ to denote the rotation by $\pi/2$ vector $(-w_2, w_1)$ for $w = (w_1, w_2)\in\R^2$ and $f_t(\cdot)$ to denote the function $f(t,\cdot)$.

Existence of global weak solutions for the (\ref{GS:dual})-(\ref{SGbdr}) system has been established via time discretization in Benamou-Brenier \cite{BB} and Cullen-Gangbo \cite{CG}. To be precisely, in 
these cited papers, the proof is given in $\R^3$, but it can be rewritten verbatim on the two-dimensional torus by using the optimal transport maps; see 
\cite[Theorems 2.1 and 3.1]{ACDF12} for further details.
When $\rho^0$ is H\"older continuous and bounded away from zero and infinity on $\T^2$, Loeper \cite{Loe2} showed that there is a unique, short-time, H\"older solution $\rho$ to 
(\ref{GS:dual})-(\ref{SGbdr}); the time interval for this H\"older
solution depends only on the bounds on $\rho^0$.
However, when $\rho^0$ is only a general probability measure, the uniqueness of weak solutions is still an open question. Due to this lack of uniqueness 
and to avoid unnecessary confusions,
we make the 
following definition on weak solutions (as already established in \cite{BB} and \cite{CG}) to 
(\ref{GS:dual})-(\ref{SGbdr}) that we are going to use throughout the paper.
\begin{defn}
By a weak solution  to (\ref{GS:dual})-(\ref{SGbdr}), we mean a pair of functions $(\rho_t, P_t^\ast)$ on $\R^2$ with the following properties:
\begin{myindentpar}{1cm}
(i) $P_t^\ast$ is convex on $\R^2$ with $P_t^{\ast}-|x|^2/2$ being $\Z^2$ periodic; $\rho_t$ is $\Z^2$ periodic;\\
(ii) $P_t^\ast$ is an Aleksandrov solution to the Monge-Amp\`ere equation
$$\det D^2 P_t^\ast=\rho_t  \text{ in }\T^2.$$
(iii) $U_t(x) = (x-\nabla P_t^{\ast}(x) )^\perp$ and $\rho_t$ satisfy the equations $\partial_t \rho_t(x) + \div (\rho_t(x) U_t(x)) =0$ and $\rho_0=\rho^0$ on $\T^2$ in the distributional sense, that is, 
$$\displaystyle\int\int_{\T^2} \left\{\p_t \varphi_t(x) + \nabla \varphi_t(x) \cdot U_t(x)\right\}\rho_t(x) dx dt +\int_{\T^2} \varphi_0(x)\rho^0(x) dx =0$$
for every $\varphi\in C_{0}^{\infty}([0,\infty)\times \R^2)$ $\Z^2$-periodic in the space variable.
\end{myindentpar}
\end{defn}

For completeness, we briefly indicate how to obtain distributional solutions to the original semigeostrophic equations from solutions $(\rho_t, P_t^\ast)$
of the dual equations (\ref{GS:dual})-(\ref{SGbdr}); see \cite{ACDF12} for a rigorous treatment. 
Let us denote by $P_t$ the Legendre transform of $P_t^{\ast}$, that is,
$$P_t(x) =\sup_{y\in \R^2}(x\cdot y-P_t^{\ast}(y)).$$
Let $p^0(x)= P_0(x)-|x|^2/2$ and 
\begin{equation}
 \left\{
 \begin{array}{rl}
  p_t (x) &:=P_t(x)-|x|^2/2,\\
  u_t (x) &:= (\p_t \nabla P_t^{\ast})\circ \nabla P_t(x) + D^2 P_t^{\ast}(\nabla P_t(x))\cdot (\nabla P_t(x)-x)^{\perp}.
 \end{array}
\right.
\end{equation}
Then $(p_t, u_t)$ is a global Eulerian solution to the original semigeostrophic equations:
\begin{equation}
\label{SG_ori}
\left\{\begin{array}{rll}
\partial_t \nabla p_t(x) + (u_t(x)\cdot\nabla) \nabla p_t(x)  -(\nabla p_t(x))^{\perp} + u_t(x) &= 0 & (t,x)\in [0,\infty)\times\T^2,\\
\div u_t(x) &= 0& (t,x)\in [0,\infty)\times\T^2,\\
p_0(x)&= p^0(x)& x\in \T^2.
\end{array}\right.
\end{equation}
In (\ref{SG_ori}), the functions $u_t$ and $p_t$ represent respectively the {\it velocity} and the {\it pressure}. The quantity $u_t^g$ related to the system (\ref{SG_ori}) defined by
$u_t^g(x) = (\nabla p_t(x))^\perp$
is called the {\it semi-geostrophic} wind.

We now return to the regularity of solutions to (\ref{GS:dual})-(\ref{SGbdr}) in the typical case where the initial density $\rho^0$ is bounded between two positive constants $\lambda$ and $\Lambda$.
The space regularity of the solutions is now well understood thanks to regularity results for the Monge-Amp\`ere equations which are mainly due to Caffarelli, De Philippis, Figalli, Savin, and Schmidt \cite{C,C1, C2, C3, DPF, DPFS, Sch}. We will recall these results in Theorems \ref{thm_collect} and \ref{Cthm}. 

Regarding the regularity with respect to time, to the best of our knowledge, the most refined result so far is due to Loeper \cite{Loe} under the condition that $\lambda$ and $\Lambda$ are close. More precisely, Loeper shows that if the initial potential density $\rho^0$ is sufficiently close to a positive constant, say,
$1-\e_0\leq\rho^0\leq 1+ \e_0$ on $\T^2$ for some $\e_0>0$ small,
 then $\p_t P_t,\p_t P_t^{\ast} \in L^{\infty}((0,\infty), C^{\alpha_0}(\T^2))$ where $\alpha_0>0$ depends only on $\e_0$; see \cite[Theorems 2.2, 2.3 and 9.2]{Loe}.

It is thus an interesting problem to study the H\"older continuity of $\p_t P_t^{\ast}$ and $\p_t P_t$ in the system (\ref{GS:dual})-(\ref{SGbdr})
when 
the closeness of the density $\rho^0$
to $1$ is removed.
This is precisely what we prove in the following theorem.
\begin{thm}[H\"older regularity of the two dimensional dual semigeostrophic equations] 
\label{GSthm} Let $\rho^0$ be a probability measure on $\T^2$.
Suppose that 
that $\lambda \leq \rho^0 \leq \Lambda$ in $\T^2$ for positive constants $\lambda$ and $\Lambda$. Let $(\rho_t, P_t^{\ast})$ solve (\ref{GS:dual})-(\ref{SGbdr}).
Let $P_t$ be the Legendre transform of $P_t^{\ast}$. Then, there 
exist $\alpha=\alpha(\lambda, \Lambda) \in (0,1)$ and $C=C(\lambda,\Lambda)>0$ such that  
$$\|\p_t P_t^{\ast}\|_{L^{\infty}((0,\infty), C^{\alpha}(\T^2))} + \|\p_t P_t\|_{L^{\infty}((0,\infty), C^{\alpha}(\T^2))} \leq C.$$
\end{thm}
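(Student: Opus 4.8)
The strategy is to differentiate the Monge--Amp\`ere equation $\det D^2P_t^{\ast}=\rho_t$ in time and to recognize that $w_t:=\p_tP_t^{\ast}$ solves an inhomogeneous linearized Monge--Amp\`ere equation whose right-hand side is the divergence of a bounded vector field and whose Monge--Amp\`ere measure is pinched between two positive constants, so that the interior H\"older estimate of Theorem~\ref{Holder_int_thm} applies. We begin with two structural observations. First, since $U_t=(x-\nabla P_t^{\ast})^{\perp}$ is divergence-free, the continuity equation is a transport equation and the bounds $\lambda\le\rho^0\le\Lambda$ propagate in time: $\lambda\le\rho_t\le\Lambda$ on $\T^2$ for all $t\ge0$. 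Second, as $P_t^{\ast}-|x|^2/2$ is $\Z^2$-periodic, $\nabla P_t^{\ast}-x$ is periodic and uniformly bounded, so $V_t:=\rho_t(x-\nabla P_t^{\ast})^{\perp}=\rho_tU_t$ obeys $\|V_t\|_{L^{\infty}(\T^2)}\le C(\lambda,\Lambda)$ uniformly in $t$; moreover, integrating the continuity equation in time, $(\rho_{t+h}-\rho_t)/h=-\div\big(h^{-1}\int_t^{t+h}V_s\,ds\big)$, so the time increments of $\rho$ are, uniformly, the divergence of bounded fields.

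Next, by the Caffarelli--De Philippis--Figalli--Savin--Schmidt theory recalled in Theorems~\ref{thm_collect} and~\ref{Cthm}, $P_t^{\ast}$ is strictly convex and $C^{1,\beta}$ on $\T^2$, with all bounds depending only on $\lambda,\Lambda$; in particular the sections of $P_t^{\ast}$ are comparable to Euclidean balls at every scale and location, uniformly in $t$. The same holds for the Legendre transform $P_t$, since $\det D^2P_t(x)=1/\rho_t(\nabla P_t(x))\in[1/\Lambda,1/\lambda]$; hence $\nabla P_t\in C^{\alpha'}(\T^2)$ with $\alpha'=\alpha'(\lambda,\Lambda)$. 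To obtain the equation for the time derivative, use the exact two-dimensional identity $\det A-\det B=\mathrm{cof}\big(\tfrac{A+B}{2}\big):(A-B)$ with $A=D^2P_{t+h}^{\ast}$, $B=D^2P_t^{\ast}$: the difference quotient $w_h:=(P_{t+h}^{\ast}-P_t^{\ast})/h$ satisfies, in the Aleksandrov sense,
\[
 \Phi_{t,h}^{ij}\,\p_{ij}w_h\;=\;\frac{\rho_{t+h}-\rho_t}{h}\;=\;-\div V_{t,h},\qquad V_{t,h}:=h^{-1}\!\int_t^{t+h}\!V_s\,ds,
\]
where $\Phi_{t,h}$ is the cofactor matrix of $\tfrac12(D^2P_{t+h}^{\ast}+D^2P_t^{\ast})$. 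Passing to the limit $h\to0$ (justified by the uniform estimates below and, if necessary, by first mollifying $\rho^0$ to obtain smooth solutions and then removing the regularization) yields that $w_t=\p_tP_t^{\ast}$ exists and solves $\Phi_t^{ij}\p_{ij}w_t=-\div V_t$ with $\Phi_t=\mathrm{cof}(D^2P_t^{\ast})$ and $\det\Phi_t=\det D^2P_t^{\ast}=\rho_t\in[\lambda,\Lambda]$ --- precisely the class of equations covered by Theorem~\ref{Holder_int_thm}.

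It remains to supply an a priori bound and to pass from the interior estimate to the torus. Since $\|U_t\|_{L^{\infty}}\le C$, the curve $t\mapsto\rho_t$ is Lipschitz in the Wasserstein metric, and the quantitative stability of the second boundary value problem for the Monge--Amp\`ere equation with densities in $[\lambda,\Lambda]$ (see Loeper~\cite{Loe} and the references therein) gives $\|P_{t+h}^{\ast}-P_t^{\ast}\|_{L^{\infty}(\T^2)}\le C(\lambda,\Lambda)|h|$, hence $\|\p_tP_t^{\ast}\|_{L^{\infty}(\T^2)}\le C(\lambda,\Lambda)$. Covering $\T^2$ by finitely many balls and lifting to $\R^2$ by periodicity, we apply Theorem~\ref{Holder_int_thm} to $w_t$ on each ball; its hypotheses are met by the previous two paragraphs and the section geometry, and all constants depend only on $\lambda,\Lambda$. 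This produces $\alpha=\alpha(\lambda,\Lambda)\in(0,1)$ with $\|\p_tP_t^{\ast}\|_{C^{\alpha}(\T^2)}\le C(\lambda,\Lambda)$, uniformly in $t$. Finally, differentiating $P_t(x)=\sup_y(x\cdot y-P_t^{\ast}(y))$ and using that the supremum is attained at $y=\nabla P_t(x)$ gives $\p_tP_t(x)=-(\p_tP_t^{\ast})(\nabla P_t(x))$; composing the $C^{\alpha}$ bound for $\p_tP_t^{\ast}$ with the $C^{\alpha'}$ bound for $\nabla P_t$ yields a uniform $C^{\alpha\alpha'}(\T^2)$ bound for $\p_tP_t$, and relabeling the exponent finishes the proof.

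The main difficulty lies in the passage to the limit in the third paragraph: the averaged cofactor matrix $\Phi_{t,h}$ is controlled only from below (by Minkowski's inequality, $\det\Phi_{t,h}\ge\lambda$) and not from above, so one cannot apply the linearized estimate to $w_h$ itself and must instead establish enough compactness --- via the uniform spatial $C^{1,\beta}$ bounds, the uniform $L^{\infty}$ bound on $\p_tP_t^{\ast}$, and an approximation by smooth densities --- to pass to the operator $\mathrm{cof}(D^2P_t^{\ast})$, whose Monge--Amp\`ere measure is genuinely pinched between $\lambda$ and $\Lambda$. Once this identification is in place, Theorem~\ref{Holder_int_thm} does the rest, and the closeness of $\rho^0$ to a constant required in \cite{Loe} is no longer needed.
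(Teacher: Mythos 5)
Your overall strategy is the paper's strategy: differentiate $\det D^2P_t^{\ast}=\rho_t$ in $t$, recognize $\partial_t P_t^{\ast}$ as solving a linearized Monge--Amp\`ere equation $\Phi^{ij}_t\partial_{ij}w_t=\div F_t$ with $\|F_t\|_{L^\infty}\le C(\lambda,\Lambda)$, invoke Theorem~\ref{Holder_int_thm}, and then transfer the estimate to $\partial_t P_t$ via $\partial_t P_t=-\partial_t P_t^{\ast}\circ\nabla P_t$. You also correctly observe that one can first work with smooth solutions and remove the smoothing at the end.

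However, there is a concrete gap in the step that supplies the $L^p$ input required by Theorem~\ref{Holder_int_thm}. You claim that, because $t\mapsto\rho_t$ is Lipschitz in the Wasserstein distance, the ``quantitative stability of the second boundary value problem for Monge--Amp\`ere with densities in $[\lambda,\Lambda]$'' yields $\|P_{t+h}^{\ast}-P_t^{\ast}\|_{L^{\infty}(\T^2)}\le C(\lambda,\Lambda)|h|$, hence $\|\partial_tP_t^{\ast}\|_{L^{\infty}}\le C$. No such Lipschitz stability (potential in $L^\infty$ versus density in $W_1$) is known when one only assumes $\lambda\le\rho_t\le\Lambda$; the available stability results in this generality are much weaker (roughly H\"older in the Wasserstein distance, with exponent tied to the regularity theory). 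Loeper's paper does not give the bound you attribute to it; his quantitative estimate (\cite[Theorem~2.1]{Loe}, recalled in Section~\ref{app_sec}) controls $\|\partial_t\nabla P_t\|_{\mathcal M(\Omega)}$, i.e., a bounded-measure bound on the \emph{gradient} of the time derivative, not an $L^\infty$ bound on $\partial_tP_t^{\ast}$ itself. The paper instead proceeds via Theorem~\ref{thm_collect}(iii): for all $t>0$,
\begin{equation*}
\int_{\T^2}\rho_t\,|\partial_t\nabla P_t^{\ast}|^{1+\kappa}\,dx\le C(\lambda,\Lambda),
\end{equation*}
which together with the normalization $\int_{\T^2}P_t^{\ast}\,dx=0$ and the Sobolev embedding $W^{1,1+\kappa}(\T^2)\hookrightarrow L^2(\T^2)$ gives $\|\partial_tP_t^{\ast}\|_{L^2(\T^2)}\le C(\lambda,\Lambda)$. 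This $L^2$ bound is exactly what Theorem~\ref{Holder_int_thm} needs (take $p=2$). So the correct bridge is the $W^{1,1+\kappa}$-type time-derivative estimate coming from the variational/transport structure, not an $L^\infty$ stability estimate; as written, your argument rests on a stability claim that is both unreferenced and, to the best of my knowledge, false in this generality.

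A secondary remark: your difference-quotient paragraph, using the exact 2D identity $\det A-\det B=\mathrm{cof}\bigl(\tfrac{A+B}{2}\bigr):(A-B)$, is correct as algebra, but you then (rightly) notice that the averaged cofactor matrix $\Phi_{t,h}$ has a determinant bounded below but not above, so the linearized estimates do not apply to $w_h$ directly. The resolution you invoke --- first regularize $\rho^0$, work with smooth solutions, and differentiate in $t$ directly --- is precisely the paper's route. So that paragraph is ultimately a detour; it does not change the structure of the argument, but it also does not save the missing $L^p$ bound discussed above.
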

We will prove Theorem \ref{GSthm} in Section \ref{SG_sec}, using Theorem \ref{Holder_int_thm} and following the strategy in \cite{Loe}.

Let us briefly explain how to prove the H\"older continuity of $\p_t P_t^{\ast}$ and $\p_t P_t$ in (\ref{GS:dual})-(\ref{SGbdr}).
To simplify the presentation, we assume all functions involved are smooth but the estimates we wish to establish will 
depend only on $\lambda$ and $\Lambda$.
Since $\div U_t=0$, the $L^{\infty}(\T^2)$ norm of $\rho_t$ is preserved in time; see also \cite[Proposition 5.2]{BB} and \cite[Lemma 9.1]{Loe}.
Thus, for all $t\geq 0$, we have $\lambda \leq \rho_t \leq \Lambda$ in $\T^2$. 
Differentiating both sides of $\det D^2 P_t^{\ast} = \rho_t$ with respect to $t$, and using the first and second equations of (\ref{GS:dual}), 
we find that $\p_t P_t^{\ast}$ solves the linearized Monge-Amp\`ere equation
\begin{equation}
\label{SGLMA}
\nabla \cdot (M_{P_t^{\ast}}(D^2 P_t^{\ast}) \nabla (\partial_t P_t^{\ast})) = \p_t \rho_t= \div (-\rho_t U_t):= \div F_t,
\end{equation}
where $M_{P_t^{\ast}}(D^2 P_t^{\ast})$ represents the matrix of cofactors of the Hessian matrix $D^2 P_t^{\ast}$.

With the bounds $\lambda\leq \rho_t\leq\Lambda$ on $\rho_t$, (\ref{SGLMA}) is a degenerate elliptic equation because we only know that the coefficient
matrix $M=M_{P_t^{\ast}}(D^2 P_t^{\ast})$ in (\ref{SGLMA}) is positive definite (due to the convexity of $P_t^{\ast}$) and satisfies
$$\lambda\leq \det M= \det D^2 P_t^{\ast}\leq \Lambda.$$

Moreover, we can bound $F_t$ in $L^{\infty}(\T^2)$ and $\p_t P_t^{\ast}$ in $L^{2}(\T^2)$, uniformly in $t$; see Theorem \ref{thm_collect}(i, ii, iii).
The H\"older regularity of $\p_t P_t^{\ast}$ hence relies on the  H\"older regularity of solutions to equation of the type (\ref{SGLMA}) 
given the
$L^p$ bounds on the solutions, where $F_t$ is a bounded vector field.

At this point, Loeper's approach and assumption on the initial potential density $\rho^0$ and ours differ.

The key tools used by Loeper \cite{Loe} are global and local maximum principles for solutions of degenerate elliptic equations proved by Murthy and Stampacchia 
\cite{MuSt} and Trudinger \cite{Tr}, and a Harnack 
inequality of Caffarelli and Guti\'errez \cite{CG97} for solutions of the homogeneous linearized Monge-Amp\`ere equation; see Theorem \ref{CGthm}. {\it These results hold in all 
dimensions $n\geq 2$.} The results of Murthy-Stampacchia and Trudinger, that we will 
recall in Theorems \ref{2corlocal} and \ref{2main4},  require
the high integrability of the coefficient matrix of the degenerate elliptic equations. In application to the  dual semigeostrophic equations 
 (\ref{GS:dual})-(\ref{SGbdr}), this  high integrability translates to the high integrability of the matrix $M_{P_t^{\ast}}(D^2 P_t^{\ast})$ in (\ref{SGLMA}), or equivalently, to the high
 integrability of $D^2 P_t^{\ast}$. 
 In view of Caffarelli's $W^{2,p}$ estimates for the 
 Monge-Amp\`ere equation \cite{C} and Wang's counterexamples \cite{W}, the last point forces the closeness of the density $\rho^0$ to 1. This is exactly the assumption on $\rho^0$ in \cite[Theorems 2.2, 2.3 and 9.2]{Loe}.

Our main tool in proving Theorem \ref{GSthm} is
the H\"older estimate
in Theorem \ref{Holder_int_thm} for the inhomogeneous linearized Monge-Amp\`ere equation of the type (\ref{SGLMA}) in two dimensions, {\it without relying on $\lambda$ and $\Lambda$ being close. }This is the topic of the next section.

\subsection{H\"older estimates for inhomogeneous linearized Monge-Amp\`ere equation}
Let $\Omega \subset \R^n$ ($n\geq 2$) be a bounded convex set with nonempty interior and let $\varphi \in C^2(\Omega)$ be a convex function such that
\begin{equation}
\label{MAbound}
\lambda\leq \det D^2\varphi \leq \Lambda~\text{in}~\Omega
\end{equation}
for some positive constants $\lambda$ and $\Lambda$.

Let 
$\Phi= (\Phi^{ij})_{1\leq i, j\leq n}=(\det D^2\varphi)(D^2\varphi)^{-1}$ denote the cofactor matrix of 
the Hessian  matrix $$D^2 \varphi=\left(\varphi_{ij}\right)_{1\leq i, j\leq n}\equiv\left(\frac{\p^2\varphi}{\p x_i\p x_j}\right)_{1\leq i, j\leq n}.$$ 
Note that, in terms of the notation of the previous Section \ref{SG_sec1}, we have $\Phi= M_{\varphi}(D^2\varphi)$.

We are interested in obtaining 
interior H\"older
estimates
for solutions to the inhomogeneous linearized Monge-Amp\`ere equation
\begin{equation}
\sum_{i, j=1}^n\Phi^{ij} u_{ij}=\div F
 \label{main_eq}
\end{equation}
in terms of $L^p$ bounds on the solutions  where $F:\Omega\rightarrow \R^n$ is a bounded vector field. 
Our motivation comes from the regularity of the semigeostrophic equations \cite{ACDF12, BB, CG, F} as mentioned in Section \ref{SG_sec1}. 

Since the matrix $\Phi$ is divergence free; that is, 
$\displaystyle\sum_{j=1}^n \p_j \Phi^{ij}=0~\text{for all } i=1, \cdots n, $
the equation (\ref{main_eq}) can also be written in the divergence form
\begin{equation}
\label{main_eq_div}
\sum_{i, j=1}^n\p_j (\Phi^{ij} u_i)\equiv \nabla\cdot (\Phi\nabla u)=\div F.
\end{equation}

When $F\equiv 0$, interior H\"older estimates for solutions to (\ref{main_eq}), under the condition (\ref{MAbound}) on the Monge-Amp\`ere measure of $\varphi$, were 
established by Caffarelli and Guti\'errez in their fundamental work \cite{CG97}. It is worth mentioning that one of the motivations of the work \cite{CG97} was Lagrangian models
of atmospheric and oceanic flows, including the dual semigeostrophic equations.

When $F\not\equiv 0$,
we are able to obtain in this paper the H\"older
estimates
for solutions to (\ref{main_eq}) in two dimensions; see Theorem \ref{Holder_int_thm}. The important point to note here is that our H\"older exponent depends
only on the bounds $\lambda$ and $\Lambda$ of the Monge-Amp\`ere measure of $\varphi$.

Besides its application to the semigeostrophic equations, Theorem \ref{Holder_int_thm} also applies to 
the H\"older regularity of 
the polar 
factorization for time dependent maps in two dimensions with densities bounded away from zero and infinity, improving previous results by Loeper \cite{Loe}; see 
Section \ref{app_sec}.

To state our estimates for (\ref{main_eq}), we recall the notion of sections of a convex function $\varphi \in C^1(\Omega)$.
Given $x \in \Omega$ and $h >0$, the Monge-Amp\`ere section of $\varphi$ centered at $x$ with 
height $h$ is defined by
$$
S_\varphi(x,h):= \{y \in \Omega: \varphi(y) < \varphi(x) +  \nabla \varphi(x)\cdot (y-x) + h\}.
$$

Our main H\"older estimate is contained in the following theorem.
\begin{thm} [Interior H\"older estimate for the inhomogeneous linearized Monge-Amp\`ere equation in two dimensions]
\label{Holder_int_thm}
Assume $n=2$. Let $\varphi \in C^2(\Omega)$ be a convex function satisfying (\ref{MAbound}).
Let $F:\Omega\rightarrow \R^n$ is a bounded vector field. Given a
section $S_\varphi(x_0, 4h_0) \subset \subset \Omega$.
Let $p \in (1, \infty)$.
There exist a universal constant $\gamma >0$ depending only on $\lambda$ and $\Lambda$ 
and a constant $C >0$, depending only on $p$, $\lambda$, $\Lambda, h_0$ and $\diam(\Omega)$ with the following property.
For every solution $u$ to 
$$\Phi^{ij} u_{ij}=\emph{div} F$$ in $S_{\varphi}(x_0, 4h_0)$, and for all $x\in S_{\varphi}(x_0, h_0)$,  we have the H\"older estimate:
 \begin{equation*}
  |u(x)-u(x_0)|\leq  C(p,\lambda,\Lambda, \emph{diam}(\Omega), h_0)  \left( 
 \|F \|_{L^{\infty}(S_\varphi(x_0, 2h_0))} + \|u\|_{L^{p}(S_\varphi(x_0, 2h_0))} \right)|x-x_0|^{\gamma}.
 \end{equation*}
\end{thm}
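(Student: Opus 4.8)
The plan is to reduce the inhomogeneous problem to the homogeneous Caffarelli--Guti\'errez setting by a suitable subtraction, and to exploit a dimension-two miracle that controls the "corrector" term. First I would normalize: after an affine change of variables we may assume the section $S_\varphi(x_0,2h_0)$ is comparable to a Euclidean ball (John's lemma plus the engulfing property of sections of solutions to (\ref{MAbound})), $x_0 = 0$, $\varphi(0)=0$, $\nabla\varphi(0)=0$, at the expense of constants depending on $\lambda,\Lambda,h_0,\diam(\Omega)$. The key idea is to kill the divergence right-hand side: in two dimensions, write $F = (F_1,F_2)$ and look for a scalar $w$ with $\nabla\cdot(\Phi\nabla w) = \div F$. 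Because $\Phi$ in $2$D has the explicit form $\Phi = \begin{pmatrix}\varphi_{22} & -\varphi_{12}\\ -\varphi_{12} & \varphi_{11}\end{pmatrix}$, one has the exact identity $\Phi\nabla w = \nabla^\perp(\text{something})$ only in special cases; more robustly, I would instead integrate by parts against test functions and use that $\div F$ is, in the appropriate weak sense, of the form handled by the De Giorgi--Nash--Moser-type machinery already available for these degenerate equations.

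Concretely, the main step is an \emph{energy/oscillation decay} argument. Fix a section $S_\varphi(0,t)$ with $t \le h_0$. Split $u = v + g$ on $S_\varphi(0,t)$, where $v$ solves the homogeneous equation $\Phi^{ij}v_{ij} = 0$ in $S_\varphi(0,t)$ with $v = u$ on $\partial S_\varphi(0,t)$, and $g = u - v$ solves $\Phi^{ij}g_{ij} = \div F$ with zero boundary data. For $v$, Theorem \ref{CGthm} (the Caffarelli--Guti\'errez Harnack inequality / interior H\"older estimate) gives $\osc_{S_\varphi(0,\theta t)} v \le (1-\mu)\,\osc_{S_\varphi(0,t)} v$ for a universal $\theta,\mu\in(0,1)$ depending only on $\lambda,\Lambda$. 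For $g$, the point is an $L^\infty$ bound $\|g\|_{L^\infty(S_\varphi(0,t))} \le C\,\|F\|_{L^\infty}\, t^{\beta}$ with $\beta>0$ depending only on $\lambda,\Lambda$: this is where dimension two enters, because the Green's function estimates for $\nabla\cdot(\Phi\nabla\cdot)$ on a normalized section, combined with the $W^{2,1+\epsilon}$ or simply the measure-theoretic properties of sections (volume of $S_\varphi(0,t)$ comparable to $t$, diameter comparable to $t^{1/2}$ up to the affine normalization), yield a gain of a positive power of $t$ when testing against a bounded field $F$. Iterating the two estimates across dyadic heights $t_k = \theta^k h_0$ produces a geometric decay $\osc_{S_\varphi(0,t_k)} u \le C\, r_k^{\gamma}(\|F\|_{L^\infty} + \|u\|_{L^p})$ with $r_k = \diam S_\varphi(0,t_k)$ and some $\gamma = \gamma(\lambda,\Lambda)>0$, and a standard interpolation over scales converts this into the stated pointwise H\"older bound once we translate section-heights back to Euclidean distances using $c|x-x_0|^{2} \le h \le C|x-x_0|$-type comparisons valid for normalized sections.

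The initialization of the iteration uses the $L^p$ norm: at the top scale $t_0 = 2h_0$ one needs $\osc_{S_\varphi(0,h_0)} u \le C(\|F\|_{L^\infty} + \|u\|_{L^p})$, which follows from a local maximum principle for $u - w_F$ (where $w_F$ is any bounded solution of the inhomogeneous equation) together with an $L^p \to L^\infty$ mean-value inequality; since the equation is in divergence form with a divergence-free matrix of bounded determinant, the De Giorgi iteration applies on a normalized section, giving $\sup_{S_\varphi(0,h_0)}|u| \le C(\|u\|_{L^p(S_\varphi(0,2h_0))} + \|F\|_{L^\infty(S_\varphi(0,2h_0))})$. The \textbf{main obstacle} I anticipate is the $L^\infty$ bound with a power gain for the inhomogeneous corrector $g$: in general dimension the Green's function of $\nabla\cdot(\Phi\nabla\cdot)$ for a merely bounded determinant is not good enough to extract a positive H\"older exponent, and this is exactly why the theorem is restricted to $n=2$. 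Handling it will require either the explicit $2\times 2$ structure of $\Phi$ (so that $\Phi\nabla\cdot$ relates to a Beltrami-type operator whose solutions enjoy a borderline Morrey/Campanato estimate) or a delicate use of the $W^{2,1}$ estimate of De Philippis--Figalli combined with Sobolev embedding in the plane; I would pursue the former, as it keeps all constants depending only on $\lambda$ and $\Lambda$, as required by the statement.
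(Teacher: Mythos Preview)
Your proposal is correct and follows essentially the same route as the paper: split $u=v+g$ on each section, apply Caffarelli--Guti\'errez oscillation decay to the homogeneous piece $v$, prove an $L^\infty$ bound with a positive power gain $\|g\|_{L^\infty(S_\varphi(x_0,h))}\le C\|F\|_{L^\infty}h^\delta$ for the zero-boundary corrector (this is the paper's Theorem~\ref{gl_thm}), iterate in $h$, and initialize with an $L^p\to L^\infty$ local maximum principle (the paper's Theorem~\ref{int_thm}, proved by Moser rather than De Giorgi iteration, via a Monge--Amp\`ere Sobolev inequality).

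The one substantive divergence is your proposed mechanism for the corrector bound. You favor exploiting the $2\times2$ Beltrami-type structure of $\Phi$; the paper instead represents $g$ via the Green's function $g_S$ of $L_\varphi$, proves $\nabla_x g_S(\cdot,y)\in L^{1+\kappa}(S)$ uniformly in $y$ (Proposition~\ref{G2_thm}), and reads off the $h^\delta$ gain from $|g(y)|=|\int_S\langle\nabla_x g_S,F\rangle|$. The Green's function gradient estimate is obtained precisely through the $W^{2,1+\varepsilon}$ estimate of De~Philippis--Figalli--Savin/Schmidt that you list as your second option, combined with high $L^p$ integrability of $g_S$ itself in two dimensions (Proposition~\ref{G1_thm}). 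So the paper takes your alternative route; whether a purely Beltrami/quasiconformal argument can deliver the same $h^\delta$ bound with constants depending only on $\lambda,\Lambda$ is not addressed there.
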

We will prove Theorem \ref{Holder_int_thm} in Section \ref{LMA_est_proof_sec}.
Our main technical tools, in addition to Caffarelli-Guti\'errez's Harnack inequality for solutions to the homogeneous linearized Monge-Amp\`ere equation in Theorem \ref{CGthm}, are 
new $L^{\infty}$ interior and global estimates for solutions to the inhomogeneous linearized Monge-Amp\`ere equation (\ref{main_eq}) in Theorems 
\ref{gl_thm} and \ref{int_thm}. 

Caffarelli and Guti\'errez \cite{CG97} proved Theorem \ref{CGthm} by using basically the non-divergence form of (\ref{main_eq});
while we prove Theorems \ref{gl_thm} and \ref{int_thm} by exploiting the divergence form character of (\ref{main_eq}). They are related to fine properties
of Green's function $G_{\varphi}$
of the degenerate operator $-\p_i (\Phi^{ij}\p_j)$.
The crucial observation here (see also \cite{L, L2})
is that Green's function $G_{\varphi}$ has, in all dimensions, the same integrability as that of the Laplace operator $\displaystyle \Delta=\sum_{i=1}^n \p_{ii}$ which corresponds to the 
case $\varphi(x)=\frac{|x|^2}{2}$. On the other hand, in two dimensions, the gradient of $G_{\varphi}$ has almost integrability as that of the Laplace operator. We do not know whether the last fact is true or not in higher dimensions.
Thus, it is an open question if the H\"older estimate in Theorem \ref{Holder_int_thm} holds for dimensions $n\geq 3$.

The rest of the paper is organized as follows. In Section \ref{Est_sec},
we provide key global and local estimates in Theorem \ref{gl_thm} and \ref{int_thm} for the inhomogeneous linearized Monge-Amp\`ere equation
and discuss related results by Murthy-Stampacchia and Trudinger.
In Section \ref{sec:prelim}, we recall several basics of the Monge-Amp\`ere equation and its linearization. 
We present the proof of Theorem \ref{GSthm} in Section \ref{SG_sec}.
We prove Theorems \ref{Holder_int_thm}, \ref{gl_thm} and \ref{int_thm}
in Section \ref{LMA_est_proof_sec}.
In Section \ref{app_sec}, we apply Theorem \ref{Holder_int_thm} to 
the regularity of polar factorization of time dependent maps in two dimensions.
The proofs of technical results concerning Green's function that we use in the proofs of Theorems \ref{gl_thm} and \ref{int_thm}  
are presented in Section \ref{auxi_sec}. The proofs of rescaling properties of the Monge-Amp\`ere equation and its linearization will be given in the final section, 
Section \ref{res_proof}.
\section{Estimates for linearized Monge-Amp\`ere equations and related results}
\label{Est_sec}
In this section, we state key global and local estimates for solutions to the inhomogeneous linearized Monge-Amp\`ere equation $\Phi^{ij} u_{ij}=\div F$
and discuss related results by Murthy-Stampacchia and Trudinger regarding solutions to degenerate elliptic equations.
\subsection{Estimates for the equation $\Phi^{ij} u_{ij}=\div F$} 
Our key estimates are the following theorems.

\begin{thm}[Global estimate for solutions to the Dirichlet problem in two dimensions] Assume $n=2$. Let $\varphi \in C^2(\Omega)$ be a convex function satisfying (\ref{MAbound}).
Let $F:\Omega\rightarrow \R^n$ is a bounded vector field.
\label{gl_thm} There exist a universal constant $\delta >0$ depending only on $\lambda$ and $\Lambda$ such that for every 
section $S_\varphi(x_0, h)$ with $S_\varphi(x_0, 2h_0) \subset \subset \Omega$ for $h_0\geq h$ and every solution $u$ to 
\begin{equation}\label{u:PDE:S}
\left\{\begin{array}{rl}
\Phi^{ij} u_{ij} & =\emph{div }F \quad \mbox{ in}\quad S_\varphi(x_0, h),\\
 u &=0 \qquad \quad \,\mbox{on}\quad \partial S_\varphi(x_0, h),
\end{array}\right.
\end{equation}
 we have
 \begin{equation}\label{gl_ineq}
  \sup_{S_\varphi(x_0, h)} |u|\leq C(\lambda,\Lambda,\emph{diam}(\Omega), h_0) \|F\|_{L^{\infty}(S_\varphi(x_0, h))} h^{\delta}.
 \end{equation}
\end{thm}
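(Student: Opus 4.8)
The plan is to exploit the divergence form $\nabla\cdot(\Phi\nabla u)=\div F$ together with the Green's function representation for the degenerate operator $\mathcal{L}_\varphi = -\p_i(\Phi^{ij}\p_j)$. Let $G_\varphi(x,y)$ denote the Green's function of $\mathcal{L}_\varphi$ on the section $S := S_\varphi(x_0,h)$ with zero boundary data. Since $u$ vanishes on $\p S$, integrating against $G_\varphi$ gives the representation
\begin{equation*}
u(x) = -\int_{S} \nabla_y G_\varphi(x,y)\cdot F(y)\,dy,
\end{equation*}
so that
\begin{equation*}
|u(x)| \le \|F\|_{L^\infty(S)} \int_{S} |\nabla_y G_\varphi(x,y)|\,dy.
\end{equation*}
Thus everything reduces to a uniform bound on $\int_S |\nabla_y G_\varphi(x,y)|\,dy$ that decays like $h^\delta$ as the section shrinks. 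First I would normalize: after an affine renormalization $T$ (John's lemma, via the results to be recalled in Section \ref{sec:prelim}) we may assume the section is comparable to a ball of radius one, and track how the scaling factor — controlled by $h$ and the bounds $\lambda,\Lambda$ via the engulfing/volume estimates for sections — enters the final inequality. The point (as the excerpt already flags, citing \cite{L, L2}) is that in two dimensions $\nabla_y G_\varphi$ has almost the integrability of $\nabla_y$ of the Newtonian potential, i.e. $\nabla_y G_\varphi \in L^q$ for some $q>1$ depending only on $\lambda,\Lambda$, with a quantitative bound on the normalized section.

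The key steps, in order: (1) reduce via affine renormalization to a normalized section $S^*\sim B_1$, recording the scaling; (2) invoke the $L^q$ estimate for $\nabla G_\varphi$ on the normalized section — this is the technical Green's function input deferred to Section \ref{auxi_sec} — to get $\|\nabla_y G_\varphi(x,\cdot)\|_{L^q(S^*)} \le C(\lambda,\Lambda)$ for a fixed $q>1$; (3) apply Hölder's inequality: $\int_{S^*}|\nabla_y G_\varphi(x,y)|\,dy \le \|\nabla_y G_\varphi(x,\cdot)\|_{L^q(S^*)}\,|S^*|^{1-1/q}$, which on the original section $S_\varphi(x_0,h)$ produces a factor $|S_\varphi(x_0,h)|^{1-1/q}$; (4) use the standard volume estimate $c h^{n/2} \le |S_\varphi(x_0,h)| \le C h^{n/2}$ with $n=2$, so $|S_\varphi(x_0,h)|^{1-1/q} \le C h^{1-1/q}$, and set $\delta := 1 - 1/q > 0$, which depends only on $\lambda$ and $\Lambda$ as required; (5) reassemble, checking that all constants arising from the renormalization depend only on $\lambda,\Lambda,\diam(\Omega)$ and $h_0$ (the latter two entering through the control of the eccentricity of sections $S_\varphi(x_0,h)$ with $S_\varphi(x_0,2h_0)\subset\subset\Omega$).

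The main obstacle is step (2): establishing the $L^q$ integrability of $\nabla_y G_\varphi$ with a constant depending only on $\lambda,\Lambda$. The scalar bound $G_\varphi \in L^s$ for every $s<\infty$ (matching the logarithmic behavior of the Laplacian's Green's function) follows from the $W^{2,\e}$-type / distribution function estimates for sections à la Caffarelli–Guti\'errez, but passing to the gradient is genuinely two-dimensional and more delicate; it relies on an energy estimate $\int_S \Phi^{ij}\p_i G_\varphi \p_j G_\varphi$ combined with the Sobolev inequality adapted to the Monge–Amp\`ere geometry, and it is precisely this step that fails to have a known analogue in dimension $n\ge 3$. I would isolate this as a lemma (proved in Section \ref{auxi_sec}) and treat the rest of the argument above as the routine assembly. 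A secondary technical point is the measurability/approximation needed to justify the Green's function representation for merely $C^2$ solutions $u$ and bounded measurable $F$; this is handled by approximating $F$ by smooth vector fields and passing to the limit using the uniform bound just derived.
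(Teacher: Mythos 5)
Your overall blueprint matches the paper: represent $u$ via the Green's function of $L_\varphi = -\p_i(\Phi^{ij}\p_j)$ on $S:=S_\varphi(x_0,h)$, integrate by parts to shift the divergence onto $g_S$, and then bound $\int_S|\nabla g_S(\cdot,y)|\,dx$ by H\"older's inequality using an $L^{1+\kappa}$-estimate for $\nabla g_S$. This is precisely the structure the paper uses. However, the specific mechanism by which you extract the $h^{\delta}$ factor has a gap.

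In your steps (2)--(4) you renormalize the section, assert $\|\nabla g_{S^*}\|_{L^q(S^*)}\le C(\lambda,\Lambda)$ on the normalized section, apply H\"older on $S^*$ to get $|S^*|^{1-1/q}=O(1)$, and then claim that ``on the original section this produces a factor $|S_\varphi(x_0,h)|^{1-1/q}$,'' giving $\delta=1-1/q$. This transfer does not follow. If $T x = A_h x+b_h$ normalizes $S$, then under the change of variables one has $\nabla_y g_S(x,y) = (A_h^{-1})^t \nabla_{\tilde y}g_{\tilde S}(T^{-1}x,T^{-1}y)$, and
$$
\int_S |\nabla_y g_S(x,y)|\,dy \le (\det A_h)\,\|A_h^{-1}\| \int_{\tilde S}|\nabla_{\tilde y} g_{\tilde S}|\,d\tilde y \le C\,\sigma_1,
$$
where $\sigma_1\ge\sigma_2$ are the singular values of $A_h$ (so $(\det A_h)\|A_h^{-1}\| = \sigma_1\sigma_2/\sigma_2 = \sigma_1$). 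From \eqref{Anorm} and \eqref{detAh} one only knows $\sigma_2 \gtrsim h$ and $\sigma_1\sigma_2\sim h$, hence $\sigma_1\le C$; there is no universal decay of $\sigma_1$ in $h$. Indeed, for a maximally eccentric section ($\sigma_1\sim 1$, $\sigma_2\sim h$), your argument produces $\int_S|\nabla g_S|\le C$ with no $h^{\delta}$ gain at all. The point is that the decay in $h$ does \emph{not} come from the volume factor $|S|^{1-1/q}$ after renormalization; the Jacobian factors introduced in the transfer exactly compensate that gain in the worst case.

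The paper circumvents this by proving the $L^{1+\kappa}$ estimate for $\nabla g_S$ \emph{directly on the section $S$, with the $h$-decay built in} (Proposition \ref{G2_thm}: $\|\nabla_x g_S(\cdot,y)\|_{L^{1+\kappa}(S)}\le C\,h^{\kappa_1}$). That $h^{\kappa_1}$ comes from combining the logarithmic energy estimate $\int_S \Phi^{ij}(\log v)_i(\log v)_j\le C$ (with $v=g_S+1$) with the $W^{2,1+\varepsilon}$ estimate for $\Delta\varphi$ on the \emph{fixed larger} section $S_\varphi(x_0,h_0)$, and, crucially, with the high integrability of the \emph{scalar} Green's function from Proposition \ref{G1_thm}: $\int_S g_S^p\le C(\lambda,\Lambda,p)\,h$. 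This last decaying scalar bound is what you are missing. The final exponent in the paper is $\delta=\kappa_1+\tfrac{\kappa}{1+\kappa}$: the $h^{\kappa_1}$ from the direct gradient estimate on $S$ plus the $|S|^{\kappa/(1+\kappa)}\sim h^{\kappa/(1+\kappa)}$ from H\"older. So the skeleton of your argument is right, but step (3) is unjustified as written, and the key deferred lemma needs to be stated as a decaying bound on $S$ itself rather than as a universal bound on a normalized section.
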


\begin{thm}[Interior estimate for the inhomogeneous linearized Monge-Amp\`ere equation in two dimensions]
\label{int_thm}
Assume $n=2$. Let $\varphi \in C^2(\Omega)$ be a convex function satisfying (\ref{MAbound}).
Let $F:\Omega\rightarrow \R^n$ is a bounded vector field.
Given $p \in (1, \infty)$, there exists a constant $C >0$, depending only on $p$, $\lambda$, $\Lambda$ and $\emph{diam}(\Omega)$ with the following property:
Every solution 
$u$ of $$\Phi^{ij} u_{ij}=\div F$$ in a 
section $S_\varphi(x_0, h)$ with $S_\varphi(x_0, 2h) \subset \subset \Omega$ satisfies
 \begin{equation}
 \label{main_int_ineq}
  \sup_{S_\varphi(x_0, h/2)} |u|\leq C(p, \lambda,\Lambda,\emph{diam}(\Omega)) \left(\|F\|_{L^{\infty}(S_\varphi(x_0, h))} + h^{-1/p}\|u\|_{L^{p}(S_\varphi(x_0, h))}\right).
 \end{equation}
\end{thm}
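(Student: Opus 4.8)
The plan is to combine the global Dirichlet estimate of Theorem \ref{gl_thm} with an interior estimate for the \emph{homogeneous} linearized Monge-Amp\`ere equation, via the usual decomposition $u = v + w$ into an ``inhomogeneous part with zero boundary data'' and a ``harmonic part.'' Concretely, on the section $S:=S_\varphi(x_0,h)$ I would solve the Dirichlet problem $\Phi^{ij} w_{ij} = \div F$ in $S$ with $w = 0$ on $\partial S$, and set $v = u - w$, so that $\Phi^{ij} v_{ij} = 0$ in $S$ and $v = u$ on $\partial S$. By Theorem \ref{gl_thm} (applied with $h_0 = h$, which is admissible since $S_\varphi(x_0,2h)\subset\subset\Omega$) we get
\begin{equation*}
\sup_S |w| \leq C(\lambda,\Lambda,\diam(\Omega))\,\|F\|_{L^\infty(S)}\,h^{\delta} \leq C\,\|F\|_{L^\infty(S)},
\end{equation*}
absorbing the harmless factor $h^\delta$ (which is bounded in terms of $\diam(\Omega)$) into the constant. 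Thus the inhomogeneous contribution is already controlled in $L^\infty(S)$ by $\|F\|_{L^\infty(S)}$ alone; the entire remaining difficulty is to bound $v$ on the smaller section $S_\varphi(x_0,h/2)$.

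For the homogeneous part $v$, the natural tool is the interior $L^\infty$-from-$L^p$ bound for solutions of $\Phi^{ij} v_{ij} = 0$, which follows from the Harnack inequality of Caffarelli-Guti\'errez (Theorem \ref{CGthm}): applying Harnack to $\sup v - v \geq 0$ and to $v - \inf v \geq 0$ on a chain of nested sections and using the engulfing property of sections, one obtains $\osc_{S_\varphi(x_0,h/2)} v \leq C\, h^{-1/p}\|v\|_{L^p(S_\varphi(x_0,\theta h))}$ for some intermediate $\theta\in(1/2,1)$, and hence $\sup_{S_\varphi(x_0,h/2)} |v|$ is controlled by $h^{-1/p}\|v\|_{L^p(S)}$ plus a pointwise value of $v$. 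Since $v = u - w$, the triangle inequality gives $\|v\|_{L^p(S)} \leq \|u\|_{L^p(S)} + \|w\|_{L^p(S)} \leq \|u\|_{L^p(S)} + |S|^{1/p}\sup_S|w|$, and $|S|^{1/p}\, h^{-1/p}$ is bounded by a constant depending on $\lambda,\Lambda,\diam(\Omega)$ because the Lebesgue measure of a section satisfies $c(\lambda,\Lambda) h \leq |S_\varphi(x_0,h)| \leq C(\lambda,\Lambda)h$ (a standard volume estimate for sections under \eqref{MAbound}). Collecting terms yields exactly
\begin{equation*}
\sup_{S_\varphi(x_0,h/2)}|u| \leq \sup_{S_\varphi(x_0,h/2)}|v| + \sup_S|w| \leq C(p,\lambda,\Lambda,\diam(\Omega))\left(\|F\|_{L^\infty(S)} + h^{-1/p}\|u\|_{L^p(S)}\right).
\end{equation*}

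The main obstacle is making the passage from Harnack to the interior $L^\infty$--$L^p$ estimate quantitative with constants that depend only on $p,\lambda,\Lambda,\diam(\Omega)$ and \emph{not} on $\varphi$ or on $h$ — this requires the affine-invariance / rescaling normalization of the Monge-Amp\`ere setup (renormalizing $S_\varphi(x_0,h)$ to a section of comparable shape via an affine map with controlled determinant, as recorded in the rescaling lemmas of Section \ref{sec:prelim}), so that all section-geometry constants (engulfing, volume, chaining length) are universal. A secondary technical point is verifying that Theorem \ref{gl_thm} is applicable here: its hypothesis demands $S_\varphi(x_0,2h_0)\subset\subset\Omega$ with $h_0\geq h$, and we have exactly $S_\varphi(x_0,2h)\subset\subset\Omega$, so we take $h_0=h$ and the factor $h^\delta$ on the right of \eqref{gl_ineq} is bounded since $h \leq \diam(\Omega)^2$ (sections have height controlled by the diameter under the lower bound on $\det D^2\varphi$). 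Once these normalizations are in place, the decomposition argument above is routine.
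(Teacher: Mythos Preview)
Your decomposition $u=v+w$ is a different route from the paper's: the paper runs Moser iteration directly on the inhomogeneous equation (Proposition~\ref{int_thm_v2}, powered by the Monge--Amp\`ere Sobolev inequality of Proposition~\ref{sob_ineq}) to obtain the estimate first for a special large exponent $p_0=2(1+\varepsilon_*)/\varepsilon_*$, then rescales and runs a covering/absorption argument over nested sections to reach general $p>1$. Your plan would bypass Moser and Sobolev entirely, handing $w$ to Theorem~\ref{gl_thm} and reducing everything to an interior $L^\infty$-from-$L^p$ bound for the \emph{homogeneous} solution $v$.

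The gap is exactly in that last step. Iterating Harnack on $\sup v - v$ and $v-\inf v$ over shrinking sections gives only oscillation \emph{decay}, $\osc_{S_{h/2}} v \le \beta\,\osc_{S_h} v$; it does not produce $\osc_{S_{h/2}} v \leq C h^{-1/p}\|v\|_{L^p}$, and Theorem~\ref{CGthm} by itself is not a local maximum principle for signed solutions. What Harnack actually delivers is an absorption inequality: cover $S_\varphi(x_0,\theta h)$ by small sections $S_\varphi(y,c(1-\theta)^\mu h)\subset S_\varphi(x_0,h)$ via engulfing (\cite[Theorem~3.3.10]{G01}), apply Theorem~\ref{CGthm} to $M\pm v$ with $M=\sup_{S_\varphi(x_0,h)}|v|$, and pick in each small section a point where $|v|$ lies below its $L^p$-average; one gets
\[
\sup_{S_\varphi(x_0,\theta h)}|v|\;\leq\;(1-1/C_H)\,M \;+\; C(1-\theta)^{-\mu/p}h^{-1/p}\|v\|_{L^p(S_\varphi(x_0,h))},
\]
which does not close at a single scale (the right side still carries $M$) and must be iterated via \cite[Lemma~4.3]{HL}. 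This is precisely the machinery the paper already uses in passing from $p_0$ to general $p$, so you are not avoiding it, only relocating it to the homogeneous piece. With that correction your argument is complete, and it does buy something: it trades the Moser iteration and the Monge--Amp\`ere Sobolev inequality for a second invocation of Theorem~\ref{gl_thm} plus one Harnack/absorption step.
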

We will prove Theorems \ref{gl_thm} and \ref{int_thm}
in Section \ref{LMA_est_proof_sec}.

Given Theorems \ref{gl_thm} and \ref{int_thm}, we can easily prove Theorem \ref{Holder_int_thm} by combining them with Caffarelli-Guti\'errez's Harnack inequality \cite[Theorem 5]{CG97} for
the homogeneous linearized Monge-Amp\`ere equation.  For completeness, we recall their result here.
\begin{thm} [Caffarelli-Guti\'errez's Harnack inequality for the linearized Monge-Amp\`ere equation]
 \label{CGthm}
 Assume $n\geq2$. Let $\varphi \in C^2(\Omega)$ be a convex function satisfying (\ref{MAbound}).
 Let
$u\in W^{2, n}_{\text{loc}}(\Omega)$ be a nonnegative solution of the homogeneous linearized Monge-Amp\`ere equation $$\Phi^{ij} u_{ij}=0$$ in a section $S_\varphi (x_0, 2h)\subset\subset \Omega$. Then
\begin{equation}\sup_{S_{\varphi}(x_0, h)} u\leq C(n, \lambda, \Lambda) \inf_{S_{\varphi}(x_0, h)} u.
 \label{HI2}
\end{equation}
\end{thm}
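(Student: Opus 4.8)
The plan is to prove Theorem \ref{CGthm} by the affine-invariant adaptation of the Krylov--Safonov iteration due to Caffarelli and Guti\'errez, in which the Euclidean balls of the classical De Giorgi--Nash--Moser / Krylov--Safonov theory are replaced throughout by the Monge--Amp\`ere sections $S_\varphi(x,t)$, and Lebesgue measure by the measure $\mu_\varphi := \det D^2\varphi\,dx$ (which, by (\ref{MAbound}), satisfies $\lambda|E|\le \mu_\varphi(E)\le\Lambda|E|$). The foundational input is Caffarelli's structure theory of sections under (\ref{MAbound}): after the normalization of a section $S_\varphi(x_0,h)$ (an affine change of coordinates together with an affine subtraction and rescaling of $\varphi$ that carries $S_\varphi(x_0,h)$ to a convex set comparable to $B_1$ while preserving (\ref{MAbound}) up to universal factors), the family $\{S_\varphi(x,t)\}$ enjoys the engulfing property and $\mu_\varphi$ is doubling with respect to sections. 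The point that makes all constants universal is that the operator $L_\varphi u := \Phi^{ij}u_{ij}=\trace(\Phi\, D^2 u)$ transforms covariantly under this normalization: a unimodular linear change $y=Tx$ sends $\Phi\mapsto T\Phi T^{t}$ and $D^2u\mapsto (T^{-1})^{t}D^2u\,T^{-1}$, so $L_\varphi u$ is affine invariant and $\det D^2\varphi$ is unchanged; hence every estimate may be proved on a normalized section and transported back.

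The analytic heart is an Aleksandrov--Bakelman--Pucci-type estimate that survives the degeneracy of $L_\varphi$ (whose eigenvalue ratio is not controlled by $\lambda,\Lambda$). Given $u\in W^{2,n}_{\loc}$ with $L_\varphi u\le f$ on a normalized section $S$ and $u\ge 0$ on $\partial S$, let $w$ be the convex envelope of $u$ from below; on the contact set $\{u=w\}$ one has $D^2 u\ge D^2 w\ge 0$, and the arithmetic--geometric mean inequality applied to the positive semidefinite pair $(\Phi, D^2 w)$ gives
$$f\ \ge\ L_\varphi u\ \ge\ \trace(\Phi\, D^2 w)\ \ge\ n\bigl(\det\Phi\,\det D^2 w\bigr)^{1/n}\ =\ n\,(\det D^2\varphi)^{(n-1)/n}(\det D^2 w)^{1/n}\ \ge\ n\,\lambda^{(n-1)/n}(\det D^2 w)^{1/n}$$
on that set, so $\det D^2 w\le C(n,\lambda)\,|f|^n$ pointwise there. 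The classical image-of-the-gradient argument ($\nabla w$ maps the contact set onto a set whose measure controls a ball of radius comparable to $\sup_S(-u)$) then yields $\sup_S(-u)\le C(n,\lambda,\Lambda,\diam(\Omega))\bigl(\int_{\{\text{contact}\}}|f|^n\bigr)^{1/n}$. Here the lower bound $\det D^2\varphi\ge\lambda$ is exactly what compensates for the missing ellipticity of $\Phi$, and it is the only place the hypothesis (\ref{MAbound}) enters beyond the structure theory of sections.

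From the ABP estimate one derives, following Krylov--Safonov, a critical-density lemma for nonnegative supersolutions of $L_\varphi$: there are structural constants $M>1$ and $c\in(0,1)$ so that if $L_\varphi u\le 0$, $u\ge 0$ in $S_\varphi(x_0,2h)$ and $\inf_{S_\varphi(x_0,h)}u\le 1$, then $\mu_\varphi\bigl(\{u\le M\}\cap S_\varphi(x_0,h)\bigr)\ge c\,\mu_\varphi\bigl(S_\varphi(x_0,h)\bigr)$; this is obtained by inserting a suitable concave barrier, built inside the normalized section and vanishing on the boundary, into the ABP inequality. Iterating this lemma with the engulfing property and a Calder\'on--Zygmund-type covering adapted to sections upgrades the single-step shrinking into the power decay $\mu_\varphi\bigl(\{u>M^{k}\}\cap S_\varphi(x_0,h)\bigr)\le C(1-c)^{k}\mu_\varphi\bigl(S_\varphi(x_0,h)\bigr)$, i.e.\ the weak Harnack inequality: for some $\e_0=\e_0(n,\lambda,\Lambda)>0$,
$$\left(\frac{1}{\mu_\varphi(S_\varphi(x_0,h))}\int_{S_\varphi(x_0,h)} u^{\e_0}\,d\mu_\varphi\right)^{1/\e_0}\ \le\ C\,\inf_{S_\varphi(x_0,h/2)}u$$
for nonnegative supersolutions. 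The dual argument applied to subsolutions (using the upper ABP bound with barriers controlling $\sup u$) gives the local maximum principle $\sup_{S_\varphi(x_0,h/2)}u\le C\bigl(\frac{1}{\mu_\varphi(S_\varphi(x_0,h))}\int_{S_\varphi(x_0,h)}u^{\e_0}\,d\mu_\varphi\bigr)^{1/\e_0}$. Since the $u$ of Theorem \ref{CGthm} is simultaneously a sub- and a supersolution of $L_\varphi u=0$, chaining these two estimates over a finite family of nested sections (using engulfing to bridge the discrepancy in heights, exactly as in the Euclidean case) produces $\sup_{S_\varphi(x_0,h)}u\le C(n,\lambda,\Lambda)\inf_{S_\varphi(x_0,h)}u$, which is (\ref{HI2}).

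The step I expect to be the main obstacle is the critical-density lemma with constants depending only on $n,\lambda,\Lambda$: the barrier must be constructed within the geometry of sections rather than balls, and one must verify that after normalization its supremum, its boundary behavior, and the quantity $\int|L_\varphi(\text{barrier})|^{n}$ over the contact set are all controlled purely by the structural constants --- this is where the doubling of $\mu_\varphi$ and the comparability of normalized sections with $B_1$ are used in an essential way. A secondary technical point is the Calder\'on--Zygmund / ``growing ink-spot'' covering that converts single-step shrinking into geometric decay of the distribution function; its validity rests again on the engulfing property, which makes $\{S_\varphi(x,t)\}$ a space of homogeneous type. The affine invariance of $L_\varphi$ and of $\det D^2\varphi$ is the mechanism that keeps every constant universal throughout.
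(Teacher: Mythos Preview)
The paper does not prove Theorem \ref{CGthm}; it is merely \emph{quoted} from Caffarelli--Guti\'errez \cite[Theorem 5]{CG97} as a known result (``For completeness, we recall their result here''), and then used as a black box in the proof of Theorem \ref{Holder_int_thm}. There is therefore no proof in the paper to compare your proposal against.

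That said, your sketch is a faithful outline of the Caffarelli--Guti\'errez argument in \cite{CG97}: the affine-invariant ABP estimate via the contact set, the critical-density lemma with a barrier on a normalized section, the Calder\'on--Zygmund covering adapted to sections (using engulfing and doubling of $\mu_\varphi$), and the combination of weak Harnack with the local maximum principle. Your identification of the critical-density step and the covering lemma as the main technical points is accurate. If you want this to stand as a proof rather than a plan, you would need to actually construct the barrier (in \cite{CG97} this is done with a function of the normalized $\varphi$ itself), verify the bound on $\int |L_\varphi(\text{barrier})|^n$, and supply the section-based covering argument; as written, these are described at the level of intent rather than executed.
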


The main technical tool in the proof of Theorem \ref{gl_thm} is the $L^{1+\kappa}$ estimate $(\kappa>0)$ stated in Proposition \ref{G2_thm} for Green's function associated to the operator
$-\p_j(\Phi^{ij}\p_i)=-\Phi^{ij}\p_{ij}$. 
 We will prove Theorem \ref{int_thm} using the Moser iteration. The main technical tool is the Monge-Amp\`ere Sobolev inequality stated in Proposition \ref{sob_ineq}. We state these 
results in Section \ref{tools_sec}.

\subsection{Integrability of Green's function and its gradient and Monge-Amp\`ere Sobolev inequality}
\label{tools_sec}
Let $\Omega \subset \R^2$ be a bounded convex set with nonempty interior and let $\varphi \in C^2(\Omega)$ be a convex function satisfying (\ref{MAbound}).
Let $g_S(x, y)$ be Green's function of the divergence form operator $\displaystyle L_\varphi:=-\sum_{i,j=1}^2\p_j(\Phi^{ij}\p_i)$ on 
the section $S:= S_\varphi(x_0, h)\subset\subset\Omega$; that is, for 
each $y\in S$, $g_S(\cdot, y)$ is a positive solution of
\begin{equation}\label{Green:S}
\left\{\begin{array}{rl}
L_\varphi g_S(\cdot, y)&=\delta_y \,\, \quad \mbox{ in}\quad S,\\
 g_S(\cdot, y) &=0 \qquad \mbox{on}\quad \partial S.
\end{array}\right.
\end{equation}
Here $\delta_y$ is the Dirac measure centered at $y$.
Due to the divergence free property of $\Phi$, we will also use interchangeably $L_\varphi=-\Phi^{ij}\p_{ij}$ for simplicity.
 The main technical tool in the proof of Theorem \ref{gl_thm} is the 
 following global $L^{1+ \kappa}$ estimates for $\nabla g_S$.
 \begin{prop} [$L^{1+ \kappa}$ estimates for gradient of Green's function]
 \label{G2_thm} 
 Assume $n=2$. Let $\varphi \in C^2(\Omega)$ be a convex function satisfying (\ref{MAbound}). Assume that $S_\varphi(x_0, 2h_0)\subset\subset\Omega$.
 Let $g_S(x, y)$ be Green's function of the operator $L_\varphi:=-\Phi^{ij}\p_{ij}$ on $S:= S_\varphi(x_0, h)$ where $h\leq h_0$, as in (\ref{Green:S}).
 There exist universal constants $\kappa, \kappa_1 > 0$ depending only on $\lambda$ and $\Lambda$ such that for every $y\in S$,
 we have
 $$
 \left(\int_S |\nabla_x g_S (x, y)|^{1+\kappa} dx\right)^{\frac{1}{1+\kappa}} \leq C(\lambda,\Lambda, {\emph{diam}(\Omega)}, h_0) h^{\kappa_1}.
 $$
 \end{prop}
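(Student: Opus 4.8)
The plan is to obtain the $L^{1+\kappa}$ bound on $\nabla_x g_S(\cdot,y)$ by combining three ingredients: (1) the known $L^{1+\kappa_0}$ integrability of $g_S$ itself (the "Green's function has Laplace-type integrability" fact alluded to in the introduction and recorded elsewhere as \cite{L,L2}); (2) a Caccioppoli-type energy inequality for $g_S(\cdot,y)$ away from the pole $y$, which controls $\int \Phi^{ij}\p_i g_S\,\p_j g_S$ on dyadic regions by $\sup g_S$ there; and (3) a reverse-Hölder / self-improvement argument that upgrades the resulting weighted $L^2$ gradient bound to an unweighted $L^{1+\kappa}$ bound, using that in two dimensions the cofactor matrix $\Phi$ has determinant comparable to $1$, so that $\trace\Phi = \Phi^{11}+\Phi^{22}$ and the eigenvalue product are balanced. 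A convenient way to run (2)–(3) is to first prove a weak-type estimate: the superlevel sets $\{|\nabla_x g_S(\cdot,y)| > s\}$, measured with respect to $\det D^2\varphi\,dx$ (equivalently Lebesgue measure, up to $\lambda,\Lambda$), have measure $\lesssim s^{-(1+\kappa)}$ with the stated dependence on $\diam(\Omega)$ and $h_0$; then integrate.

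Here is the order I would carry this out. First I would normalize: by the rescaling properties of the Monge-Ampère equation and its linearization (quoted from the later section), it suffices to treat a normalized section, so I may assume $B_{c}\subset S = S_\varphi(x_0,h)\subset B_{C}$ with $\varphi$ normalized and $\lambda\le\det D^2\varphi\le\Lambda$; I track the dependence on $h$ and $h_0$ through the rescaling constants (this is where the factor $h^{\kappa_1}$ appears). Next, split $S$ into the region near the pole, say $S_\varphi(y,\tau h)$ for a small universal $\tau$, and the region away from the pole. Near the pole one uses that $g_S(\cdot,y)$ behaves like a fundamental solution of a uniformly-elliptic-after-affine-rescaling operator on small sections, so $|\nabla_x g_S|$ there is comparable (in an averaged sense) to the classical $|x-y|^{-1}$, which is in $L^{1+\kappa}$ for any $\kappa<1$; this is the place to invoke the known fine pointwise/size estimates for $g_S$ and the structure of sections. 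Away from the pole, $g_S(\cdot,y)$ is a positive $L_\varphi$-harmonic function, so by the Caffarelli–Guitérrez Harnack inequality (Theorem \ref{CGthm}) it satisfies $\osc$-type control on each section, and the Caccioppoli inequality on a chain of sections joining $\partial S_\varphi(y,\tau h)$ to $\partial S$ converts the $L^\infty$ decay of $g_S$ into an energy bound $\int_{S\setminus S_\varphi(y,\tau h)}\Phi^{ij}\p_i g_S\,\p_j g_S\,dx \le C(\lambda,\Lambda,\diam(\Omega),h_0)$.

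The main obstacle — and the genuinely two-dimensional step — is passing from this energy bound, which only controls $\int \Phi^{ij}\p_i g_S\,\p_j g_S$ (a $\Phi$-weighted $L^2$ norm of $\nabla g_S$, degenerate where the smallest eigenvalue of $D^2\varphi$ is large), to an \emph{unweighted} bound on $\nabla g_S$ in $L^{1+\kappa}$. In dimension $2$ one has the elementary inequality $|\nabla g_S|^2 \le (\trace\Phi)^{-1}\,\Phi^{ij}\p_i g_S\,\p_j g_S$ combined with $\det\Phi = \det D^2\varphi \ge \lambda$ and $\trace\Phi$ comparable to $\|D^2\varphi\|$; then Hölder's inequality gives $\int|\nabla g_S|^{1+\kappa} \le \big(\int\Phi^{ij}\p_i g_S\,\p_j g_S\big)^{\frac{1+\kappa}{2}}\big(\int(\trace\Phi)^{\frac{1+\kappa}{1-\kappa}}\big)^{\frac{1-\kappa}{2}}$, so it remains to show $\trace\Phi \in L^{1+\kappa'}$ for some $\kappa'>0$ depending only on $\lambda,\Lambda$. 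That last integrability is exactly the two-dimensional interior $W^{2,1+\kappa'}$-type estimate for $\varphi$ with $\lambda\le\det D^2\varphi\le\Lambda$ (no closeness assumption), which holds in $2$D precisely because of the cofactor structure and is unavailable in higher dimensions — this is the reason the statement is restricted to $n=2$ and the source of the remark that the higher-dimensional case is open. I would isolate this $W^{2,1+\kappa'}$ fact as the key lemma, prove it (or cite it) for normalized sections, and then assemble the pieces, choosing $\kappa = \kappa(\lambda,\Lambda)$ small enough that both the near-pole $L^{1+\kappa}$ bound and the away-from-pole Hölder bound close, with the $h^{\kappa_1}$ factor produced by unwinding the normalization.
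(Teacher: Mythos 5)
Your high-level plan (control a $\Phi$-weighted gradient energy, then trade the weight for $\Delta\varphi$ and use $W^{2,1+\varepsilon}$ via H\"older) is in the right family, but the paper uses a genuinely different, cleaner mechanism that you are missing, and this matters because your version has a real gap near the pole. The paper's key step is a single \emph{global weighted} energy estimate for $v := g_S(\cdot,y)+1$: testing $\partial_i(\Phi^{ij}v_j)\leq 0$ against $w^2/v$ and using Cauchy--Schwarz yields $\int_S \Phi^{ij}(\log v)_i(\log v)_j\,dx = \int_S \Phi^{ij}v_iv_j\,v^{-2}\,dx \leq C(\lambda,\Lambda)$, with the cutoff $w=\gamma(\varphi/h)$ chosen so that the cutoff term can be integrated by parts using $\partial_i\Phi^{ij}=0$. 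The $v^{-2}$ weight kills the singularity at $y$ automatically, so no near/far splitting is needed. Then H\"older is applied to $|\nabla v|^p = \big(|\nabla v|^p(\Delta\varphi)^{-p/2}v^{-p}\big)\cdot\big((\Delta\varphi)^{p/2}v^p\big)$ with exponents $2/p$, $2/(2-p)$; the first factor is controlled by the weighted energy together with $\Phi^{ij}v_iv_j\geq \det D^2\varphi\,|\nabla v|^2/\Delta\varphi$, while the second needs $\Delta\varphi\in L^{1+\varepsilon_*}$ \emph{and} the high $L^q$ integrability of $v$ (Proposition \ref{G1_thm}) to absorb the $v^{2p/(2-p)}$ factor.

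Your version drops the $v^{-2}$ weight. The unweighted energy $\int_S \Phi^{ij}\partial_i g_S\,\partial_j g_S\,dx$ diverges (logarithmically, summed over dyadic shells toward $y$), so the Caccioppoli-plus-Harnack chain only gives a bound on the complement of $S_\varphi(y,\tau h)$, and your near-pole treatment is the gap: you assert ``$g_S$ behaves like a fundamental solution of a uniformly-elliptic-after-affine-rescaling operator,'' but affine rescaling of a normalized section does \emph{not} make $\Phi$ uniformly elliptic pointwise --- only $\det\Phi$ is controlled, while the eigenvalues can still degenerate --- so you cannot conclude averaged pointwise bounds $|\nabla g_S|\lesssim|x-y|^{-1}$ this way. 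Also note that your H\"older display as written involves $\big(\int\Phi^{ij}\partial_ig_S\partial_jg_S\big)^{(1+\kappa)/2}$ with no $v$-weight and no compensating $v$-power in the other factor, which is precisely the divergent quantity; fixing it forces you back toward the paper's $v^{-2}$/log-trick. Finally, your stated reason for the $n=2$ restriction is off: the $W^{2,1+\varepsilon_*}$ estimate of De Philippis--Figalli--Savin/Schmidt holds in all dimensions $n\geq 2$. What is specifically two-dimensional here is that $g_S\in L^q$ for \emph{every} finite $q$ (Proposition \ref{G1_thm}, via exponential decay of the superlevel sets of $g_S$), which is what allows the $v^{2p/(2-p)}$ factor to be absorbed; in higher dimensions $g_S$ has only bounded integrability and this step breaks.
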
 
 \begin{rem}
 Let $\e_\ast=\e_\ast(\lambda,\Lambda)>0$ be the universal constant in De Philippis-Figalli-Savin and Schmidt's $W^{2, 1+\e}$ estimate for the Monge-Amp\`ere 
 equation (\ref{MAbound}); see \cite{DPFS, Fi, Sch} and (\ref{w21est}).
 Then we can choose $\kappa$ and $\kappa_1$ in Proposition \ref{G2_thm} as follows:
 $$\kappa = \frac{\e}{2+\e}, \kappa_1= \frac{\e_\ast-\e}{2(1+\e_\ast)(1+\e)}$$
 where $\e$ is any fixed number in the interval $(0,\e_\ast)$. In the case of $\varphi(x)=|x|^2/2$, $L_{\varphi}=-\Delta$, we have $\e_\ast =\infty$. Thus, in this case, $\kappa$ can be chosen to be any positive 
 number less than $1$, which is optimal.
 \end{rem}
 The main technical tool in the proof of Theorem \ref{int_thm} is the following Monge-Amp\`ere Sobolev inequality;
 it is a two dimensional counterpart of the higher dimensional result in
\cite[Theorem 3.1]{TiW08}.
\begin{prop}[Monge-Amp\`ere Sobolev inequality]
\label{sob_ineq} 
Assume $n=2$. Let $\varphi \in C^2(\Omega)$ be a convex function satisfying (\ref{MAbound}).
Suppose that $S_\varphi (x_0, 2)\subset\subset\Omega$ and $B_1(0)\subset S_\varphi (x_0, 1)\subset B_2(0)$. Then, for every $p\in (2,\infty)$ there exists a constant $K>0$, depending only on $\lambda,\Lambda$ and $p$, such that
\begin{equation}\label{Sob:Phi}
 \left(\int_{S_\varphi(x_0, 1)} |w|^{p}dx\right)^{1/p}\leq K \left(\int_{S_\varphi(x_0, 1)} \Phi^{ij} w_i w_jdx\right)^{1/2} \quad \text{for all }w\in C^1_0(S_\varphi(x_0, 1)).
\end{equation}
\end{prop}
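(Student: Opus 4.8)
The plan is to prove (\ref{Sob:Phi}) by a duality argument that trades the Sobolev inequality for an energy estimate for the Dirichlet problem associated with $L_\varphi:=-\p_j(\Phi^{ij}\p_i)$ on the normalized section $S:=S_\varphi(x_0,1)$; the energy estimate is in turn controlled by the (known) integrability of the associated Green's function. Set $p':=p/(p-1)\in(1,2)$. Since $w\in C^1_0(S)\subset L^p(S)$, by duality
\[
\left(\int_S |w|^p\,dx\right)^{1/p}=\sup\left\{\int_S wg\,dx\ :\ g\in C^\infty_0(S),\ \|g\|_{L^{p'}(S)}\le1\right\},
\]
so it suffices to bound $\left|\int_S wg\,dx\right|$ by $C(\lambda,\Lambda,p)\left(\int_S\Phi^{ij}w_iw_j\,dx\right)^{1/2}$ for every such $g$.

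Fix $g$ as above. Because $\overline S\subset S_\varphi(x_0,2)\subset\subset\Omega$ and $\det D^2\varphi\ge\lambda>0$ on $\Omega$, the cofactor matrix $\Phi$ is symmetric, positive definite and bounded, with eigenvalues bounded between two positive constants on $\overline S$; hence, by standard elliptic theory, there is a weak solution $v\in H^1_0(S)$ (continuous up to $\overline S$) of $L_\varphi v=g$ in $S$, $v=0$ on $\partial S$, given by $v(x)=\int_S g_S(x,y)g(y)\,dy$, where $g_S\ge0$ is Green's function of $L_\varphi$ on $S$ as in (\ref{Green:S}). Only the existence and regularity of $v$ are used at this stage; no bounds on $\Phi$ uniform in $\lambda,\Lambda$ are needed here. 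Testing the equation for $v$ against $w$ (integration by parts, valid since $w\in C^1_0(S)$), and then applying the Cauchy--Schwarz inequality for the nonnegative symmetric bilinear form $(a,b)\mapsto\int_S\Phi^{ij}a_ib_j\,dx$, gives
\[
\left|\int_S wg\,dx\right|=\left|\int_S\Phi^{ij}v_iw_j\,dx\right|\le\left(\int_S\Phi^{ij}v_iv_j\,dx\right)^{1/2}\left(\int_S\Phi^{ij}w_iw_j\,dx\right)^{1/2}.
\]
Thus the whole inequality reduces to the energy bound $\int_S\Phi^{ij}v_iv_j\,dx\le C(\lambda,\Lambda,p)$.

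To obtain this, I would test the equation for $v$ against $v$ itself: $\int_S\Phi^{ij}v_iv_j\,dx=\int_S gv\,dx\le\|g\|_{L^{p'}(S)}\|v\|_{L^p(S)}\le\|v\|_{L^p(S)}$. From $v(x)=\int_S g_S(x,y)g(y)\,dy$, $g_S\ge0$, and H\"older's inequality with the conjugate exponents $p,p'$, one gets $|v(x)|\le\|g_S(x,\cdot)\|_{L^p(S)}\|g\|_{L^{p'}(S)}\le\|g_S(x,\cdot)\|_{L^p(S)}$, hence $\|v\|_{L^p(S)}\le|S|^{1/p}\sup_{x\in S}\|g_S(x,\cdot)\|_{L^p(S)}$. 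At this point I invoke the key input: in two dimensions Green's function of $L_\varphi$ on a \emph{normalized} section has the same integrability as Green's function of the Laplacian, i.e.\ $\sup_{x\in S}\|g_S(x,\cdot)\|_{L^q(S)}\le C(\lambda,\Lambda,q)$ for every $q<\infty$ (see \cite{L,L2} and the Green's function estimates of Section \ref{auxi_sec}); this is exactly where the normalization $B_1(0)\subset S\subset B_2(0)$, together with $S_\varphi(x_0,2)\subset\subset\Omega$, is used to make the constant universal. Applying this with $q=p$ and using $|S|\le|B_2(0)|$ gives $\|v\|_{L^p(S)}\le C(\lambda,\Lambda,p)$, hence $\int_S\Phi^{ij}v_iv_j\,dx\le C(\lambda,\Lambda,p)$; combining with the previous display and taking the supremum over $g$ proves (\ref{Sob:Phi}) with $K=K(\lambda,\Lambda,p)$.

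The only genuinely non-trivial ingredient is the uniform $L^q$ integrability ($q<\infty$) of $g_S$ on normalized sections used in the last step; this is where both the two-dimensionality and the Monge--Amp\`ere structure --- through the $W^{2,1+\e}$ theory of De Philippis--Figalli--Savin and Schmidt and the Caffarelli--Guti\'errez normalization of sections --- genuinely enter, everything else being soft functional analysis (duality, integration by parts, Cauchy--Schwarz for the degenerate form). I expect this to be the main obstacle. I also note that $n=2$ is essential here: for $n\ge3$ Green's function of the Laplacian lies in $L^q$ only for $q<n/(n-2)$, which would restrict the admissible exponent $p$ and corresponds to the form $\|w\|_{L^{2n/(n-2)}}$ of the higher-dimensional inequality in \cite[Theorem 3.1]{TiW08}.
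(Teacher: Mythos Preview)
Your argument is correct. Both your proof and the paper's rest on the same key input --- Proposition~\ref{G1_thm}, the uniform $L^q$ integrability of Green's function $g_S$ on a normalized section for every $q<\infty$ --- and both implicitly use the symmetry $g_S(x,y)=g_S(y,x)$ (from Gr\"uter--Widman \cite{GW}) to pass from $\sup_y\|g_S(\cdot,y)\|_{L^q}$ to $\sup_x\|g_S(x,\cdot)\|_{L^q}$. The difference lies in how the Sobolev inequality is extracted from this input. The paper first weakens the $L^q$ bound to a distribution estimate $|\{g_S(\cdot,y)>\tau\}|\le K\tau^{-q/2}$ and then invokes an abstract lemma of Tian--Wang \cite[Lemma~2.1]{TiW08} as a black box. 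Your route is more direct and self-contained: duality reduces (\ref{Sob:Phi}) to an energy bound for the Dirichlet problem $L_\varphi v=g$, which you close via the representation $v=\int g_S(\cdot,y)g(y)\,dy$, H\"older, and the $L^p$ bound on $g_S$; no external lemma is needed. One minor remark: your closing comment that the $W^{2,1+\e}$ theory ``genuinely enters'' here is slightly off --- Proposition~\ref{G1_thm} is proved in Section~\ref{auxi_sec} using only the section geometry (Lemmas~\ref{smallV} and~\ref{g_decay_lem}) and volume estimates, not the De Philippis--Figalli--Savin/Schmidt estimate; the latter is needed only for the gradient bound in Proposition~\ref{G2_thm}.
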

The proofs of Propositions \ref{G2_thm} and \ref{sob_ineq} are based on 
 the following high integrability of $g_S$ when $n=2$ whose proof is based on \cite{L}.
 \begin{prop}[High integrability of Green's function]
 \label{G1_thm} 
 Assume $n=2$. Let $\varphi \in C^2(\Omega)$ be a convex function satisfying (\ref{MAbound}). 
  Let $S:= S_{\varphi}(x_0, h)$ where $S_\varphi(x_0, 2h)\subset\subset\Omega$. 
 Let $g_S(x, y)$ be Green's function of 
 the operator $L_\varphi:=-\Phi^{ij}\p_{ij}$ on $S$, as in (\ref{Green:S}).
Then, for any $p\in (1,\infty)$, we have
 $$
 \int_S g_S^p(x, y) dx \leq C(\lambda,\Lambda, p) h \quad \text{for all }y \in S.
 $$
\end{prop}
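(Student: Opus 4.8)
The plan is to bound the distribution function of $g_S(\cdot,y)$ and then integrate it: I will show that, in two dimensions, the super-level sets $\{x\in S:g_S(x,y)>t\}$ shrink exponentially fast towards the pole $y$, so that they have measure $\lesssim e^{-ct}$ and every $L^p$ norm is finite with the correct scaling. By the affine invariance of the Monge-Amp\`ere equation and of the operator $L_\varphi$ (Section \ref{res_proof}) I first reduce to the normalized situation $h=1$, $B_1(0)\subset S:=S_\varphi(x_0,1)\subset B_2(0)$, with $\varphi$ still satisfying \eqref{MAbound} for comparable constants; under the normalizing affine map $T$ one has $g_S(T\cdot,y)=\tfrac{|\det T|}{h}\,g_{\widetilde S}(\cdot,\widetilde y)$ with $|\det T|/h\simeq 1$ when $n=2$ (since $|S_\varphi(x_0,h)|\simeq h$), so that $\int_S g_S^p(\cdot,y)\,dx\simeq h\int_{\widetilde S}g_{\widetilde S}^p(\cdot,\widetilde y)\,dx$, and it suffices to prove $\int_S g_S^p(x,y)\,dx\le C(p,\lambda,\Lambda)$ in the normalized setting.

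Two auxiliary facts are needed. First, an $L^1$-type bound: since $L_\varphi$ is self-adjoint, $g_S(x,y)=g_S(y,x)$, so $\int_S g_S(x,y)\,dx=v(y)$ with $L_\varphi v=1$ in $S$, $v=0$ on $\partial S$; as $\det(\Phi^{ij})=\det D^2\varphi\ge\lambda$ for $n=2$ and $\diam(S)\le 4$ after normalization, Aleksandrov's estimate applied to $L_\varphi=-\Phi^{ij}\partial_{ij}$ gives $0\le v\le C(\lambda,\Lambda)$, whence $\int_S g_S(x,y)\,dx\le C(\lambda,\Lambda)$. Second, a bound away from the pole: for the Green's function of any section $\Sigma$ (with a definite amount of room around it) of any convex $\psi$ satisfying \eqref{MAbound}, and with pole at the centre of $\Sigma$, one has a universal bound $\le C_0(\lambda,\Lambda)$ on the half-height surface of $\Sigma$ --- because there the Green's function is positive and $L_\psi$-harmonic, hence, by Caffarelli--Guti\'errez's Harnack inequality (Theorem \ref{CGthm}) along a finite chain of sections, controlled by its average, which by the first fact and the volume growth $|S_\varphi(z,s)|\simeq s$ (here $n=2$ enters) is at most $C_0$.

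The core step is a telescoping comparison giving logarithmic growth of $g_S$ near $y$. Let $d$ be the largest height with $S_\varphi(y,d)\subset S$, put $S_k:=S_\varphi(y,2^{-k}d)$ and $m_k:=\sup_{\partial S_k}g_S(\cdot,y)$, which by the maximum principle equals $\sup_{S\setminus S_k}g_S(\cdot,y)$. The difference $v_k:=g_S(\cdot,y)-g_{S_k}(\cdot,y)$ is nonnegative and $L_\varphi$-harmonic in $S_k$ (monotonicity of Green's functions on nested domains, the pole singularities at $y$ cancelling) and equals $g_S(\cdot,y)\le m_k$ on $\partial S_k$, hence $v_k\le m_k$ in $S_k$; restricting to $\partial S_{k+1}$ and rescaling $S_k$ to a normalized section with $y$ at its centre,
\[
m_{k+1}\ \le\ m_k+\sup_{\partial S_{k+1}}g_{S_k}(\cdot,y)\ =\ m_k+\sup_{\partial S_{\widetilde\varphi}(0,1/2)}g_{S_{\widetilde\varphi}(0,1)}(\cdot,0)\ \le\ m_k+C_0(\lambda,\Lambda)
\]
by the second auxiliary fact. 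Granting (see below) that also $m_0\le C_0(\lambda,\Lambda)$ uniformly in $y$, iteration gives $m_k\le C(\lambda,\Lambda)(1+k)$, so $g_S(x,y)>t$ forces $x\in S_\varphi(y,2^{-k(t)}d)$ with $k(t)\ge c(\lambda,\Lambda)\,t-C$, and therefore
\[
\bigl|\{x\in S:g_S(x,y)>t\}\bigr|\ \le\ \bigl|S_\varphi(y,2^{-k(t)}d)\bigr|\ \le\ C(\lambda,\Lambda)\,2^{-k(t)}\ \le\ C(\lambda,\Lambda)\,e^{-c(\lambda,\Lambda)\,t},
\]
using $|S_\varphi(z,s)|\le Cs$. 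Integrating, $\int_S g_S^p(x,y)\,dx=p\int_0^\infty t^{p-1}\bigl|\{g_S(\cdot,y)>t\}\bigr|\,dt\le C(p,\lambda,\Lambda)$, and undoing the normalization gives $\le C(\lambda,\Lambda,p)\,h$.

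The step I expect to be the main obstacle is exactly making all the constants uniform in the pole $y$ --- in particular obtaining $m_0\le C_0(\lambda,\Lambda)$ when $y$ lies close to $\partial S$. There the maximal inner section $S_\varphi(y,d)$ has its boundary touching $\partial S$, so $g_S(\cdot,y)$ on $\partial S_\varphi(y,d)$ is a ``logarithm of comparable quantities'' and ought to be universally bounded, yet the naive comparisons (e.g.\ $g_S\le g_{S'}$ for a slightly larger section $S'$) are too lossy near the boundary; one must instead use the fine theory of sections (engulfing, volume estimates, the boundary behaviour of $L_\varphi$-harmonic functions) together with the $L^1$ bound above, which is where the Green's function estimates of \cite{L} do the real work. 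The restriction to $n=2$ is essential and enters only through the volume growth $|S_\varphi(z,s)|\simeq s$, which makes the exponentially small super-level set have exponentially small \emph{measure}; running the same scheme in dimension $n\ge 3$, where $g_S$ behaves like $|x-y|^{2-n}$ near the pole rather than logarithmically, gives only $g_S\in L^p$ for $p<n/(n-2)$, consistent with the Newtonian potential.
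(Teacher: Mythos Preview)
Your approach is essentially the paper's: establish logarithmic growth of $g_S(\cdot,y)$ near the pole via an iterative bound $m_{k+1}\le m_k+C_0$ on concentric sections, deduce exponential decay of the distribution function $|\{g_S(\cdot,y)>\tau\}|$ using the two-dimensional volume growth $|S_\varphi(z,s)|\simeq s$, and integrate by layer cake. Your telescoping step is the content of the paper's Lemma~\ref{smallV}(ii), and your ``second auxiliary fact'' (pointwise bound on the half-height surface via the $L^1$ estimate plus Harnack) is Lemma~\ref{smallV}(i); these are combined in Lemma~\ref{g_decay_lem} to produce the super-level set inclusion you wrote down.

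Where you diverge from the paper is in handling the uniformity in $y$, and here you have overestimated the difficulty. You dismiss the comparison $g_S\le g_{S'}$ for a slightly larger section $S'$ as ``too lossy near the boundary,'' but this is exactly what the paper does, and it works cleanly. The hypothesis $S_\varphi(x_0,2h)\subset\subset\Omega$ gives room to take $S'=S_\varphi(x_0,\tfrac32 h)$; by the engulfing property (which you yourself list among the needed tools) there is a universal $\eta=\eta(\lambda,\Lambda)>0$ such that $S_\varphi(y,\eta h)\subset S'$ for \emph{every} $y\in S$. Thus one first proves the estimate in the ``special case'' $S_\varphi(y,\eta h)\subset S$ --- this is your scheme with $d=\eta h$, and now $m_0\le C_0$ comes directly from Lemma~\ref{smallV}(i) --- and then deduces the general case from $g_{S_\varphi(x_0,h)}\le g_{S_\varphi(x_0,3h/2)}$ together with the special case applied to $S'$ at height $\tfrac32 h$. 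No boundary theory for $L_\varphi$-harmonic functions is needed; the only ingredients beyond your outline are the engulfing property and the monotonicity of Green's functions on nested domains, both of which you already invoked elsewhere. (A minor remark: with the paper's normalization $\tilde\varphi(x)=(\det A_h)^{-1}\varphi(Tx)$ in dimension two, the Green's function is in fact affinely invariant, $g_S(Tx,y)=g_{\tilde S}(x,\tilde y)$, rather than scaled by $|\det T|/h$; since $|\det T|\simeq h$ here your conclusion is unaffected.)
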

We will prove  Propositions \ref{G2_thm}, \ref{sob_ineq} and \ref{G1_thm} in Section \ref{auxi_sec}.
\subsection{Related results by Murthy-Stampacchia and Trudinger}
\label{MST_sect}
Since the matrix $\Phi=(\Phi^{ij})$ in our Theorems \ref{gl_thm} and \ref{int_thm} are divergence free,
the equation
$$\Phi^{ij} u_{ij}=\div F$$
can be written in the divergence form
$$\div (\Phi \nabla u)=\div F.$$
In this section, we discuss related results by Murthy-Stampacchia \cite{MuSt} and Trudinger \cite{Tr} 
concerning the maximum principle, local and global estimates, local and global regularity for degenerate elliptic equations in the divergence form
\begin{equation}
\label{2divforme}
\div (M(x)\nabla u(x))= \div V(x)~\text{in}~\Omega\subset\R^n.
\end{equation}
where $M= (M_{ij})_{1\leq i, j\leq n}$ is  nonnegative symmetric matrix, and
$V$ is a bounded vector field in $\R^n$. 

Without any special structure on the matrix M, it is difficult to obtain the $L^{\infty}$ bound on the solution $u$ to (\ref{2divforme})
in terms of the $L^{\infty}$ bound on the vector field $V$ for equation (\ref{2divforme}) with  Dirichlet boundary data, or in 
terms of the $L^{\infty}$ bound on the vector field $V$ and an integral bound on the solution $u$ in a larger domain. To the best of our knowledge,
some of the strongest results in this generality are due to Murthy-Stampacchia \cite{MuSt} and Trudinger \cite{Tr}. To obtain these results, they require 
high integrability of the matrix $M$ and its inverse. That is, the usual strict ellipticity condition in the classical De Giorgi-Nash-Moser theory (see, for example, Chapter 8 in Gilbarg-Trudinger \cite{GT})
$$
\lambda |\xi|^2 \leq M_{ij}\xi_i\xi_j \leq \Lambda |\xi|^2 ~\text{for some positive constants }\lambda~\text{and }\Lambda, \text{ and for all } \xi \in \R^n,
$$
is replaced by the following condition:
$$\lambda_{M,1}^{-1},~\lambda_{M, 2} \in L^p_{loc}(\Omega)  \text{ for some } p>n,
$$
where $\lambda_{M, 1}(x)$ and $\lambda_{M,2}(x)$ are the smallest and largest eigenvalues of $M(x)$.

We denote by $S_n^+$ the set of $n\times n$ nonnegative symmetric matrices. For reader's convenience, we state the following well known results. 
\begin{thm}
(\emph{Bound for Dirichlet boundary data}; \emph{see} \cite[Chapter 7]{MuSt} \emph{and} \cite[Theorem 4.2]{Tr})
\label{2corlocal}
Let $M=(M_{ij})_{1\leq i, j\leq n}: \Omega \to S_n^+$ be such that $\lambda_{M,1}^{-1}$ is in $L^p_{\text{loc}}(\Omega; S_n^+)$ for some $p>n$. Let $V$ be in $L^{\infty}(\Omega;\R^n)$. 
If $u$ is a solution of (\ref{2divforme}) in $B_R(y) \subset\subset \Omega $ and $u= 0$ on $\partial B_R(y)$, then
$$
\sup_{B_R(y)} |u| \leq C(n, p)\|\lambda_{M, 1}^{-1}\|_{L^p(B_R(y))}\|V\|_{L^{\infty}(B_R(y))} R^{1-\frac{n}{p}}.
$$
\end{thm}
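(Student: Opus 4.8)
The plan is to run the classical De Giorgi--Stampacchia truncation method, adapted to the fact that the only ellipticity we may use is the one-sided degenerate bound $M(x)\xi\cdot\xi\geq\lambda_{M,1}(x)\abs{\xi}^2$ with $\lambda_{M,1}^{-1}\in L^p(B_R(y))$ for some $p>n$. Write $B=B_R(y)$, and for $k>0$ put $A(k)=\{x\in B:\ u(x)>k\}$. Since $u=0$ on $\partial B$ and $k>0$, the truncation $(u-k)^+$ vanishes on $\partial B$, lies in $W^{1,1}_0(B)$, and has finite degenerate energy $\int_B M\nabla(u-k)^+\cdot\nabla(u-k)^+=\int_{A(k)}M\nabla u\cdot\nabla u<\infty$, so it is an admissible test function (after the usual density argument) in the weak formulation $\int_B M\nabla u\cdot\nabla\varphi=\int_B V\cdot\nabla\varphi$. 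Testing with $\varphi=(u-k)^+$, whose gradient is $\nabla u\,\mathbf{1}_{A(k)}$, gives $\int_{A(k)}M\nabla u\cdot\nabla u=\int_{A(k)}V\cdot\nabla u\leq\|V\|_{L^\infty(B)}\int_{A(k)}\abs{\nabla u}$.

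Next I would convert this into an $L^1$ bound for the gradient on the super-level set. From $M\nabla u\cdot\nabla u\geq\lambda_{M,1}\abs{\nabla u}^2$ and the weighted Cauchy--Schwarz inequality $\int_{A(k)}\abs{\nabla u}\leq\bigl(\int_{A(k)}\lambda_{M,1}\abs{\nabla u}^2\bigr)^{1/2}\bigl(\int_{A(k)}\lambda_{M,1}^{-1}\bigr)^{1/2}$ one first gets $\int_{A(k)}\lambda_{M,1}\abs{\nabla u}^2\leq\|V\|_{L^\infty(B)}^2\int_{A(k)}\lambda_{M,1}^{-1}$, and then, applying Cauchy--Schwarz once more together with Hölder's inequality in the exponent $p$,
$$\int_{A(k)}\abs{\nabla(u-k)^+}\ \leq\ \|V\|_{L^\infty(B)}\int_{A(k)}\lambda_{M,1}^{-1}\ \leq\ \|V\|_{L^\infty(B)}\,\|\lambda_{M,1}^{-1}\|_{L^p(B)}\,\abs{A(k)}^{1-\frac1p}.$$
Invoking the Sobolev embedding $W^{1,1}_0(B)\hookrightarrow L^{n/(n-1)}(B)$ and using $(u-k)^+\geq h-k$ on $A(h)$ for $h>k$,
$$(h-k)\abs{A(h)}^{\frac{n-1}{n}}\leq\|(u-k)^+\|_{L^{n/(n-1)}(B)}\leq C(n)\int_{A(k)}\abs{\nabla(u-k)^+},$$
so that combining the last two displays yields the decay inequality $\abs{A(h)}\leq (h-k)^{-\frac{n}{n-1}}\bigl(C(n)\|V\|_{L^\infty(B)}\|\lambda_{M,1}^{-1}\|_{L^p(B)}\bigr)^{\frac{n}{n-1}}\abs{A(k)}^{\beta}$ with $\beta=\dfrac{n(p-1)}{(n-1)p}$.

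The crux — and the place where the hypothesis is genuinely used — is the elementary identity $\beta-1=\dfrac{p-n}{(n-1)p}$, which is positive \emph{precisely} because $p>n$. Hence Stampacchia's iteration lemma applies to the nonincreasing function $k\mapsto\abs{A(k)}$ on $[0,\infty)$ and produces a level $d>0$ with $\abs{A(d)}=0$, i.e. $\sup_B u\leq d$. Tracking the constant in that lemma (with $\abs{A(0)}\leq\abs{B}=\omega_n R^n$, and noting that $\|\lambda_{M,1}^{-1}\|_{L^p(B)}$, $\|V\|_{L^\infty(B)}$ and $R^{n(\beta-1)}$ each enter with exponent $\tfrac{n}{n-1}$ and so, after raising to the power $\tfrac{n-1}{n}$, appear linearly, while $R^{n(\beta-1)\cdot\frac{n-1}{n}}=R^{1-n/p}$) gives
$$\sup_B u\ \leq\ C(n,p)\,\|\lambda_{M,1}^{-1}\|_{L^p(B)}\,\|V\|_{L^\infty(B)}\,R^{1-\frac{n}{p}}.$$
Finally I would apply the same argument to $-u$, which solves $\div(M\nabla(-u))=\div(-V)$ with $\|-V\|_{L^\infty(B)}=\|V\|_{L^\infty(B)}$, to bound $\sup_B(-u)$, and combine the two to control $\sup_B\abs{u}$. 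The only delicate points are the admissibility of $(u-k)^+$ in the weighted energy space (dealt with by density and the finiteness of $\int_B M\nabla u\cdot\nabla u$) and the bookkeeping of exponents in Stampacchia's lemma; neither is a real obstacle once $p>n$ is used to secure $\beta>1$.
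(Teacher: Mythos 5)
The paper does not prove Theorem~\ref{2corlocal}: it is stated ``for reader's convenience'' and attributed to Murthy--Stampacchia \cite{MuSt} and Trudinger \cite{Tr}, so there is no in-paper proof to compare against; I evaluate your argument on its own terms.

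Your argument is correct, and it is precisely the classical Stampacchia truncation scheme that those references use. All the steps check out: testing with $(u-k)^+$; the double application of Cauchy--Schwarz which trades the degenerate lower bound $M\nabla u\cdot\nabla u\ge\lambda_{M,1}|\nabla u|^2$ for the $L^p$ integrability of $\lambda_{M,1}^{-1}$ and yields $\int_{A(k)}|\nabla(u-k)^+|\le\|V\|_{L^\infty}\|\lambda_{M,1}^{-1}\|_{L^p}|A(k)|^{1-1/p}$; the $W^{1,1}_0\hookrightarrow L^{n/(n-1)}$ embedding; the level-set decay with exponent $\beta=\tfrac{n(p-1)}{(n-1)p}$, which satisfies $\beta>1$ exactly when $p>n$; the exponent bookkeeping in Stampacchia's iteration lemma with $|A(0)|\le\omega_nR^n$ giving $n(\beta-1)\cdot\tfrac{n-1}{n}=1-\tfrac{n}{p}$; and replacing $u$ by $-u$ to obtain the two-sided bound on $|u|$. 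The one point that deserves care, and which you correctly flag, is that the weak formulation must be posed in the weighted energy space (the completion of $C_c^\infty$ under $\int_B M\nabla\cdot\cdot\nabla\cdot$) so that $(u-k)^+$ is an admissible test function after a density argument; with that convention, which is the one adopted in \cite{MuSt,Tr}, the proof is complete.
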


\begin{thm}
(\emph{Bound without boundary data}; \emph{see} \cite[Chapter 8]{MuSt} \emph{and} \cite[Corollary 5.4]{Tr})
\label{2main4}
Let $M= (M_{ij})_{1\leq i, j\leq n}: \Omega \to S_n^+$ be such that $\lambda_{M,2}, \lambda_{M, 1}^{-1}$ are both in $L^p_{loc}(\Omega)$, with $p>n$. Let $V$ be 
in $L^{\infty}(\Omega; \R^n)$. Let $u$ be a solution of (\ref{2divforme})
in $\Omega$. 
Then we have for any ball $B_{2R}(y) \subset \subset \Omega$ and $q>0$
$$
\sup_{B_R(y)} |u| \leq C(\|u\|_{L^{q}(B_{2R}(y))} +  \|V\|_{L^{\infty}(B_{2R}(y))})
$$
where $C$ depends on $n, R, q, p, \|\lambda_{M, 2}\|_{L^p(B_{2R}(y))}$ and $\| \lambda_{M, 1}^{-1}\|_{L^p(B_{2R}(y))}$. 
\end{thm}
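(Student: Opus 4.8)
The plan is to run a De Giorgi truncation argument adapted to the degeneracy of $M$, and then to lower the exponent on the right-hand side to an arbitrary $q>0$ by the classical interpolation device. Since the equation is linear and $M$ symmetric, replacing $u$ by $-u$ and $V$ by $-V$ reduces the assertion to an estimate for $\sup_{B_R(y)}u^+$. Fix $y$, write $B_\rho:=B_\rho(y)$, $A(k,\rho):=\{u>k\}\cap B_\rho$, $\mu:=\norm{V}_{L^\infty(B_{2R})}$, $\Lambda_p:=\norm{\lambda_{M,2}}_{L^p(B_{2R})}$, $\nu_p:=\norm{\lambda_{M,1}^{-1}}_{L^p(B_{2R})}$, and let $C$ denote a constant depending only on $n,R,q,p,\Lambda_p,\nu_p$ that may change from line to line. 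For $k\ge0$ set $v:=(u-k)^+$ and choose $\zeta\in C_0^\infty(B_r)$ with $\zeta\equiv1$ on $B_{r'}$ ($R\le r'<r\le2R$) and $|\nabla\zeta|\le 2/(r-r')$. Testing $\div(M\nabla u)=\div V$ against $\zeta^2v$, using $\lambda_{M,1}|\xi|^2\le M\xi\cdot\xi\le\lambda_{M,2}|\xi|^2$, and applying Young's inequality to the term $\int V\cdot\nabla(\zeta^2v)$ so as to absorb $\tfrac12\int\zeta^2\lambda_{M,1}|\nabla v|^2$ into the left-hand side, one obtains the weighted Caccioppoli estimate
\[
\int_{B_r}\zeta^2\,\lambda_{M,1}\,|\nabla v|^2\,dx\ \le\ C\int_{B_r}v^2\,|\nabla\zeta|^2\,\lambda_{M,2}\,dx\ +\ C\mu^2\int_{A(k,r)}\lambda_{M,1}^{-1}\,dx
\]
(the routine lower-order terms produced by Young's inequality are handled in the same way and omitted). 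Note that the two weights appearing here, $\lambda_{M,2}$ and $\lambda_{M,1}^{-1}$, lie in $L^p(B_{2R})$.

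\textbf{A degenerate Sobolev inequality.}
The crucial point is that the Caccioppoli estimate controls only the \emph{weighted} energy, not $\int|\nabla(\zeta v)|^2$, so the ordinary Sobolev inequality is not directly applicable. To manufacture a gain one sets $w:=\zeta v$ and applies H\"older with exponents $\tfrac{n+1}{n}$ and $n+1$ together with $\lambda_{M,1}^{-1}\in L^p(B_{2R})\subset L^n(B_{2R})$:
\[
\Big(\int_{B_r}|\nabla w|^{\frac{2n}{n+1}}\,dx\Big)^{\frac{n+1}{n}}\ \le\ \Big(\int_{B_r}\lambda_{M,1}|\nabla w|^2\,dx\Big)\Big(\int_{A(k,r)}\lambda_{M,1}^{-n}\,dx\Big)^{1/n}.
\]
Since $\tfrac{2n}{n+1}<n$, the Sobolev embedding $W^{1,\frac{2n}{n+1}}_0(B_r)\hookrightarrow L^{2\chi}(B_r)$ holds with $\chi:=\tfrac{n}{n-1}>1$; combining this with the Caccioppoli estimate gives
\[
\norm{w}_{L^{2\chi}(B_r)}^2\ \le\ C\,\norm{\lambda_{M,1}^{-1}}_{L^n(A(k,r))}\Big(\frac{1}{(r-r')^2}\int_{A(k,r)}v^2\lambda_{M,2}\,dx\ +\ \mu^2\int_{A(k,r)}\lambda_{M,1}^{-1}\,dx\Big).
\]
(Applied once to $u$ itself this also shows that every energy weak solution belongs to $L^{2\chi}_{\mathrm{loc}}$, so all the norms below are finite.)

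\textbf{The De Giorgi recursion.}
Now estimate the two weighted integrals by H\"older against $\lambda_{M,2},\lambda_{M,1}^{-1}\in L^p(B_{2R})$. Because $p>n$, the conjugate exponent of $\chi$, namely $\chi'=n$, is strictly less than $p$, so
\[
\int_{A(k,r)}v^2\lambda_{M,2}\le\norm{v}^2_{L^{2\chi}(A(k,r))}\,\Lambda_p\,|A(k,r)|^{\frac1n-\frac1p},\qquad\norm{\lambda_{M,1}^{-1}}_{L^n(A(k,r))}\int_{A(k,r)}\lambda_{M,1}^{-1}\le\nu_p^2\,|A(k,r)|^{1+\frac1n-\frac2p},
\]
with \emph{strictly positive} powers of $|A(k,r)|$ in both cases precisely because $p>n$. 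Setting $\psi(k,\rho):=\norm{(u-k)^+}^2_{L^{2\chi}(B_\rho)}$, observing that $(u-h)^+\le(u-k)^+$ and $\mathbf{1}_{B_{r'}}\le\zeta$ for $h>k$, and using Chebyshev's inequality $|A(h,\rho)|\le(h-k)^{-2\chi}\psi(k,\rho)^{\chi}$, one is led, after the usual manipulations, to a recursion of De Giorgi type, schematically of the form
\[
\psi(h,r')\ \le\ C\Big(\frac{1}{(r-r')^{2}}+\mu^{2}\Big)(h-k)^{-2\sigma}\,\psi(k,r)^{1+\sigma},\qquad h>k,\ r'<r,
\]
for some $\sigma=\sigma(n,p)>0$; the super-linearity $\sigma>0$ is exactly what the borderline hypothesis $p>n$ delivers. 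With the levels $k_j:=d(1-2^{-j})$ and radii $r_j:=R(1+2^{-j})$, the standard fast-geometric-decay lemma yields $\psi(k_j,r_j)\to0$ as soon as $d\ge C\big(\norm{u^+}_{L^{2\chi}(B_{2R})}+\mu\big)$, and hence
\[
\sup_{B_R(y)}u^+\ \le\ C\big(\norm{u^+}_{L^{2\chi}(B_{2R}(y))}+\mu\big).
\]

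\textbf{Lowering the exponent, and the main obstacle.}
Carrying out the previous step with the radii running between arbitrary $s<t\le 2R$ in place of $R<2R$ produces the scale-invariant version $\sup_{B_s}u^+\le C\,(t-s)^{-a}\big(\norm{u^+}_{L^{2\chi}(B_t)}+\mu\big)$ for some $a=a(n,p)>0$. Feeding this into the classical interpolation argument of Bombieri--Giusti / Ladyzhenskaya--Ural'tseva — apply it along a nested family $B_{R_i}$ with $R_i\downarrow R$, interpolate $\norm{u^+}_{L^{2\chi}(B_{R_i})}\le\big(\sup_{B_{R_i}}u^+\big)^{1-\theta}\norm{u^+}_{L^q(B_{2R})}^{\theta}$ with $\theta=q/(2\chi)$ (the case $q\ge2\chi$ being immediate by H\"older), absorb the supremum term by Young's inequality and sum the resulting series — one obtains $\sup_{B_R}u^+\le C\big(\norm{u^+}_{L^q(B_{2R})}+\mu\big)$ for every $q>0$, with $C$ of the asserted form. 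Applying the same argument to $-u$ (with $V$ replaced by $-V$) bounds $\sup_{B_R}u^-$, and $\sup_{B_R}|u|=\max\{\sup_{B_R}u^+,\sup_{B_R}u^-\}$ completes the proof. The main obstacle — and the place where essentially all the work sits — is the bookkeeping in the recursion step: one must verify that the powers of $|A(k,r)|$ produced at each degenerate H\"older step combine into a genuinely super-linear recursion, and this is exactly what forces, and twice uses, the borderline hypothesis $p>n$; the degenerate Sobolev inequality, which first lowers the integrability exponent from $2$ to $\tfrac{2n}{n+1}$ and then recovers $2\chi>2$, is the technical device that makes the weighted energy estimate usable in the first place.
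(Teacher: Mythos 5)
The paper does not prove Theorem \ref{2main4} at all: it is quoted without proof from Murthy--Stampacchia \cite{MuSt} and Trudinger \cite{Tr}, and is used only as background to explain why Loeper's argument requires the density to be close to a constant. So there is no internal proof to compare against, only the cited sources.

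Your blind reconstruction is correct and is, in substance, the argument of those references. The pivot is exactly the degenerate Sobolev inequality you derive by peeling off $\lambda_{M,1}^{\pm n/(n+1)}$ with H\"older and invoking $\lambda_{M,1}^{-1}\in L^p(B_{2R})\subset L^n(B_{2R})$; that is the device in \cite{Tr}. The hypothesis $p>n$ enters precisely where you say: the two terms produced by the degenerate H\"older steps yield the exponents $1+2\chi\bigl(\tfrac1n-\tfrac1p\bigr)$ and $\chi\bigl(1+\tfrac1n-\tfrac2p\bigr)$ on $\psi(k,r)$, and both exceed $1$ if and only if $p>n$; the standard fast-decay lemma handles a two-term super-linear recursion after normalizing $\norm{u^+}_{L^{2\chi}(B_{2R})}+\mu$ to $1$, which is the ``usual manipulation'' you allude to. The one substantive difference from the originals is that Trudinger and Murthy--Stampacchia run a Moser-type iteration rather than De Giorgi truncation; your De Giorgi variant is equivalent, and if anything slightly cleaner for the final Bombieri--Giusti interpolation that lowers the exponent on the right-hand side from $2\chi$ to an arbitrary $q>0$. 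Your presentation of the Caccioppoli estimate, the passage from the weighted energy to $\norm{w}_{L^{2\chi}}$, and the concluding reduction from $u^+$ to $|u|$ are all sound.
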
 

In our Theorems \ref{gl_thm} and \ref{int_thm} in two dimensions, the matrix $\Phi$ belongs to $L^{1+\e_\ast}_{\text{loc}}(\Omega)$,
by De Philippis-Figalli-Savin and Schmidt's $W^{2, 1+\e}$ estimates for the Monge-Amp\`ere equation \cite{DPFS, Sch}. 
Thus, the smallest and largest eigenvalues $\lambda_{\Phi, 1}$ and $\lambda_{\Phi, 2}$ of $\Phi$ satisfies $\lambda_{\Phi, 1}^{-1},\lambda_{\Phi, 2}\in L^{1+\e_\ast}_{\text{loc}}(\Omega)$.
The exponent $\e_\ast = \e_\ast(\lambda,\Lambda)>0$ 
is small and can be taken to be arbitrary close to $0$  when the ratio $\Lambda/\lambda$ is large, by Wang's examples \cite{W}. 
In particular, when $\Lambda/\lambda$ is large, and when $M=\Phi$,
the assumptions in Theorems \ref{2corlocal} and \ref{2main4} on the eigenvalues of $M$ are not satisfied.

On the other hand, in any dimension, when we impose either the continuity or closeness to a positive constant of $\det D^2\varphi$, then 
by Caffarelli's $W^{2,p}$ estimates for the Monge-Amp\`ere equation \cite{C}, $\lambda_{\Phi, 1}^{-1}$ and $\lambda_{\Phi, 2}$ belong to $L^p_{\text{loc}}(\Omega)$
for any $p\in (1,\infty)$. 
Thus, we can apply
Theorems \ref{2corlocal} and \ref{2main4} to (\ref{main_eq}). This is what Loeper used in his proofs of the 
 H\"older regularity of the polar factorization for time-dependent maps and the semigeostrophic equations in \cite[Theorems 2.2, 2.3 and 9.2]{Loe}.

\section{Preliminaries on the Monge-Amp\`ere equation and its linearization}\label{sec:prelim}

Throughout this section we fix a bounded convex set with nonempty interior $\Omega \subset \R^n$ and assume that $\varphi \in C^2(\Omega)$ is a strictly convex function satisfying
\begin{equation}\label{mu:1}
\lambda \leq \det D^2 \varphi \leq \Lambda \quad \text{in } \Omega,
\end{equation}
for some $0 < \lambda \leq \Lambda$. The results in this section hold for all dimensions $n\geq 2$.
\subsection{Basics of the Monge-Amp\`ere equation} We recall in this section some well-known results on the Monge-Amp\`ere equation that we will use in later sections of the paper.
\subsection*{Universal constants} Constants depending only on $\lambda$ and $\Lambda$ in \eqref{mu:1} as well as on dimension $n$ will be called \emph{universal} constants. 

\subsection*{Monge-Amp\`ere sections} Given $x \in \Omega$ and $h >0$, the Monge-Amp\`ere section of $\varphi$ centered at $x$ and with height $h$ is defined as
$$
S_\varphi(x,h):= \{y \in \Omega: \varphi(y) < \varphi(x) + \nabla \varphi(x)\cdot (y-x) + h\}.
$$
A section $S_\varphi(x,h)$ is said to be \emph{normalized} if it satisfies the following inclusions
$$
B_1(0) \subset S_\varphi(x, h) \subset B_n(0),
$$
where $B_r(0)$ denotes the $n$-dimensional ball centered at $0$ and with radius $r >0$. Recall that, by John's lemma, every open bounded convex set with non-empty interior can be normalized
by affine transformations.

\subsection*{Volume estimates for sections} There exists a universal constant $C(n, \lambda, \Lambda) > 0$ such that for every section $S_\varphi(x, h)\subset\subset\Omega$, we have the following volume estimates:
\begin{equation}\label{vol-sec1}
C(n, \lambda, \Lambda)^{-1}h^{n/2} \leq |S_\varphi(x, h)| \leq C(n, \lambda, \Lambda) h^{n/2},
\end{equation}
see \cite[Corollary 3.2.4]{G01}.
\subsection*{$W^{2, 1+\e}$ estimate}
By De Philippis-Figalli-Savin and Schmidt's $W^{2, 1+\e}$ estimates for the Monge-Amp\`ere equation \cite{DPFS, Sch} (see also \cite[Theorem 4.36]{Fi}), there exists $\e_\ast = \e_\ast(n,\lambda,\Lambda)>0$ 
such that
$D^2 \varphi\in L^{1+\e_\ast}_{loc}(\Omega)$.  More precisely, if $S_\varphi(x_0, 1)$ is a normalized section and $S_\varphi(x_0, 2)\subset\subset \Omega$ then
\begin{equation}
\label{w21est}
\|\Delta\varphi\|_{L^{1+\e_\ast}(S_\varphi(x_0, 1))}\leq C(n,\lambda,\Lambda).
\end{equation}
In the following lemma, we estimate the $L^{1+\e_\ast}$ norm of $\Delta\varphi$ and the $C^{\alpha}$ norm of $D\varphi$ on a section $S_{\varphi}(x_0, h)\subset\subset\Omega$. 
They will be applied to Theorem \ref{Holder_int_thm} and Proposition \ref{G2_thm} for $h=h_0$.
\begin{lem}\label{DPFS_lem} 
Let $\varphi \in C^2(\Omega)$ be a convex function satisfying (\ref{MAbound}). 
Let $\e_{\ast}$ be as in (\ref{w21est}).
There exist positive universal constants $\alpha\in (0,1),\alpha_1$ and $\alpha_2$ depending only on $\lambda$,
$\Lambda$ and $n$ such that the following statements hold. If $S_\varphi(x_0, 2h) \subset \subset \Omega$ then

\begin{myindentpar}{1cm}
(i) 
$$\|\Delta\varphi\|_{L^{1+\e_\ast}(S_{\varphi}(x_0, h))} \leq C(\lambda,\Lambda, n,{\emph{diam}}(\Omega)) h^{-\alpha_2}.$$
(ii)
$$|D\varphi(x)-D\varphi(y)|\leq C(\lambda,\Lambda, n,{\emph{diam}}(\Omega))h^{-\alpha_1} |x-y|^{\alpha}~\text{for all }x, y\in S_{\varphi}(x_0, h/2).$$
\end{myindentpar}
\end{lem}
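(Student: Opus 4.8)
The plan is to reduce both statements to the normalized setting, where the $W^{2,1+\e_\ast}$ estimate \eqref{w21est} and the interior $C^{1,\alpha}$ estimate of Caffarelli apply directly, and then transfer back via the affine normalization of sections, keeping careful track of how the normalizing affine map distorts derivatives in terms of $h$. First I would invoke John's lemma to fix an affine map $T x = A x + b$ such that $T(S_\varphi(x_0, h))$ is normalized, i.e. $B_1(0) \subset T(S_\varphi(x_0, h)) \subset B_n(0)$. Setting $\tilde\varphi(y) := (\det A)^{-2/n}\,\big[\varphi(T^{-1}y) - \ell(y)\big]$ with $\ell$ an affine function chosen so that $\tilde\varphi$ vanishes appropriately and has the right slope, one checks that $\det D^2\tilde\varphi = \det D^2\varphi \circ T^{-1}$, so $\tilde\varphi$ still satisfies \eqref{MAbound} with the same $\lambda,\Lambda$, and $S_{\tilde\varphi}(Tx_0, (\det A)^{-2/n}h)$ is normalized. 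The standard normalization used in the Monge-Amp\`ere literature in fact arranges that one works with $S_{\tilde\varphi}(\cdot,1)$ normalized; I would rescale the height simultaneously so that the rescaled section of height $1$ is exactly the normalized one, which is the hypothesis under which \eqref{w21est} is stated.

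Next, the crucial quantitative input is control on the matrix norm $\|A\|$ and on $\det A$ in terms of $h$. By the volume estimate \eqref{vol-sec1} one has $|S_\varphi(x_0,h)| \sim h^{n/2}$ with universal constants, and since $T$ maps this set into a normalized convex body of volume comparable to $1$, $|\det A| \sim h^{-n/2}$. For the operator norm, one uses that $S_\varphi(x_0, h) \subset \subset \Omega$ combined with the fact that sections of solutions to \eqref{MAbound} are comparable to ellipsoids: the eigenvalues of $A$ are controlled above and below by powers of $h$ (and by $\diam(\Omega)$ from below, since $S_\varphi(x_0,h) \subset \Omega$ has diameter at most $\diam(\Omega)$), giving $\|A\| \leq C(\lambda,\Lambda,n,\diam(\Omega))\, h^{-\beta}$ and $\|A^{-1}\| \leq C h^{-\beta'}$ for suitable universal exponents $\beta, \beta' > 0$. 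This is the step I expect to be the main obstacle: one must extract explicit $h$-dependence of the normalizing map, which requires the engulfing/comparison properties of sections (e.g. from \cite[Chapter 3]{G01}) rather than just their volume, and care is needed because $S_\varphi(x_0,h)$ is not itself normalized for $h \ne$ the distinguished value — the eccentricity of a section can in principle degenerate, but under the containment $S_\varphi(x_0,2h)\subset\subset\Omega$ one gets the needed two-sided bounds.

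Finally I would pull the estimates back. For (i): apply \eqref{w21est} to $\tilde\varphi$ on its normalized section to get $\|\Delta\tilde\varphi\|_{L^{1+\e_\ast}(S_{\tilde\varphi})} \leq C(n,\lambda,\Lambda)$; since $D^2\varphi(x) = (\det A)^{2/n} A^{T} D^2\tilde\varphi(Tx) A$ and the change of variables contributes a Jacobian factor $|\det A|^{-1}$ under the integral, collecting the powers of $|\det A| \sim h^{-n/2}$ and of $\|A\| \leq C h^{-\beta}$ yields $\|\Delta\varphi\|_{L^{1+\e_\ast}(S_\varphi(x_0,h))} \leq C h^{-\alpha_2}$ for an explicit universal $\alpha_2$. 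For (ii): Caffarelli's interior $C^{1,\alpha}$ estimate for Aleksandrov solutions of \eqref{MAbound} gives, on the normalized section, $|D\tilde\varphi(y_1) - D\tilde\varphi(y_2)| \leq C(n,\lambda,\Lambda)|y_1-y_2|^{\alpha}$ for $y_1,y_2$ in the half-section $S_{\tilde\varphi}(\cdot, 1/2) = T(S_\varphi(x_0,h/2))$; since $D\varphi(x) = (\det A)^{2/n} A^{T} D\tilde\varphi(Tx) + (\text{const})$ and $|Tx - Ty| = |A(x-y)| \leq \|A\||x-y|$, one gets $|D\varphi(x)-D\varphi(y)| \leq (\det A)^{2/n}\|A\|\cdot C\,\|A\|^{\alpha}|x-y|^{\alpha} \leq C h^{-\alpha_1}|x-y|^{\alpha}$, where $\alpha_1$ absorbs $\tfrac{1}{n}\cdot\tfrac{n}{2}\cdot 2 = 1$ worth of $h^{-1}$ from $(\det A)^{2/n}$ together with $(1+\alpha)\beta$ from the powers of $\|A\|$. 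Choosing $\alpha_1,\alpha_2$ as the resulting universal exponents and noting all constants depend only on $\lambda,\Lambda,n,\diam(\Omega)$ completes the proof.
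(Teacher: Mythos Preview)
Your proposal is correct and follows essentially the same route as the paper: normalize the section by John's lemma, apply the $W^{2,1+\e_\ast}$ estimate \eqref{w21est} and Caffarelli's interior $C^{1,\alpha}$ estimate on the normalized picture, and then pull back, tracking powers of $h$ through the affine map.

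Two small points. First, with your convention ($T(S_\varphi(x_0,h))$ normalized, so your $A$ plays the role of the paper's $A_h^{-1}$), the scaling that preserves $\det D^2$ is $\tilde\varphi(y)=(\det A)^{+2/n}\varphi(T^{-1}y)$, not $(\det A)^{-2/n}$; your sign slip propagates to give $(\det A)^{2/n}\sim h^{-1}$ instead of $h^{+1}$ in the final bookkeeping, which still yields positive exponents $\alpha_1,\alpha_2$ but not the ones the paper records ($\alpha_1=\tfrac{n}{2}(1+\alpha)-1$, $\alpha_2=n-1-\tfrac{n}{2(1+\e_\ast)}$). Second, the step you flagged as the main obstacle --- the bound $\|A\|\le C(\lambda,\Lambda,n,\diam\Omega)\,h^{-n/2}$ --- is handled in the paper (inside the proof of Lemma~\ref{res_eq_lem}) via the Aleksandrov maximum principle: one shows $\dist(S_\varphi(x_0,h),\partial S_\varphi(x_0,2h))\ge c\,h^{n/2}/(\diam\Omega)^{n-1}$, hence $|D\varphi|\le C h^{1-n/2}$ on $S_\varphi(x_0,h)$, hence $S_\varphi(x_0,h)\supset B(x_0,c\,h^{n/2})$. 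Your alternative (bound the smallest eigenvalue of $A$ below by $c/\diam\Omega$ from $S_\varphi(x_0,h)\subset\Omega$, then use $\det A\sim h^{-n/2}$ to bound the largest eigenvalue) gives the same exponent $\beta=n/2$ and is perfectly fine.
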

The proof of Lemma \ref{DPFS_lem} will be given in Section \ref{res_proof}. 
\subsection{Rescaling properties for the equation $\Phi^{ij} u_{ij}=\div F$} 
Here we record how the equation \eqref{main_eq} changes with respect to normalization of a section $S_\varphi(x_0, h)\subset\subset\Omega$ of $\varphi$. 

By subtracting $\varphi(x_0) + \nabla\varphi(x_0)\cdot (x-x_0) + h$ from $\varphi$, we may assume 
that $\varphi\mid_{\p S_\varphi(x_0, h)} =0$ and $\varphi$ achieves its minimum $-h$ at $x_0$. By John's lemma, there exists an affine transformation $Tx =A_h x+ b_h$ such that
\begin{equation}\label{normSh}
B_1 (0)\subset T^{-1} (S_\varphi(x_0, h))\subset B_n(0).
\end{equation}
Introduce the following rescaled functions on $T^{-1} (S_\varphi(x_0, h))$:
\begin{equation}
\label{def:tildes}
\left\{
\begin{array}{rl}
\tilde \varphi(x) &:=  (\det A_h)^{-2/n} \varphi(Tx),\\
\tilde u(x) &:= u(Tx),\\
\tilde F(x) &:=(\det A_h)^{\frac{2}{n}} A_h^{-1} F(Tx).
\end{array}
\right.
\end{equation}
Then, we have
\begin{equation}\lambda \leq \det D^2 \tilde \varphi \leq \Lambda~\text{in}~T^{-1} (S_\varphi(x_0, h)),
 \label{detD2tilde}
\end{equation}
with $\tilde \varphi =0$ on $\p T^{-1}(S_\varphi(x_0, h))$ and
$$
B_1(0)\subset \tilde S:=T^{-1}(S_\varphi(x_0, h)) = S_{\tilde \varphi}(\tilde x_0, (\det A_h)^{-2/n} h)\subset B_n(0),
$$
where $\tilde x_0$ is the minimum point of $\tilde \varphi$ in $T^{-1} (S_\varphi(x_0, h))$. 

The following lemma records how the equation \eqref{main_eq} changes with respect to the normalization \eqref{def:tildes} of a section $S_\varphi(x_0, h)\subset\subset\Omega$ of $\varphi$. 
\begin{lem}
\label{res_eq_lem} 
Let $\varphi \in C^2(\Omega)$ be a convex function satisfying (\ref{MAbound}). Let $F:\Omega\rightarrow \R^n$ is a bounded vector field.
Assume that $S_\varphi(x_0, 2h)\subset\subset\Omega$. Under the rescaling \eqref{def:tildes}, the linearized Monge-Amp\`ere equation $$\Phi^{ij} u_{ij}=\div F \text{ in } S_{\varphi}(x_0, h)$$ becomes
\begin{eqnarray}
 \tilde \Phi^{ij}\tilde u_{ij}= \emph{div} \tilde F~\text{in}~\tilde S
 \label{eqSh1}
\end{eqnarray}
with
\begin{equation}
 \|\tilde F\|_{L^{\infty}(\tilde S)}\leq C(\lambda,\Lambda, n,{\emph{diam}}(\Omega))h^{1-\frac{n}{2}}\|F\|_{L^{\infty}(S_\varphi(x_0, h))}
 \label{tFbound}
\end{equation}
and, for any $q > 1$, we have
\begin{equation}
\label{tilde_u_Lq}
  C^{-1}(n,\lambda,\Lambda, q) h^{-\frac{n}{2q}}\|u\|_{L^{q}(S_{\varphi(x_0, h)})}\leq \|\tilde u\|_{L^{q}(\tilde S)} \leq C(n,\lambda,\Lambda, q)
  h^{-\frac{n}{2q}}\|u\|_{L^{q}(S_{\varphi(x_0, h)})}.
\end{equation}
\end{lem}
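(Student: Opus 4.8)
The plan is to prove Lemma~\ref{res_eq_lem} by a direct change of variables under the affine map $Tx = A_h x + b_h$ together with the chain rule, then to track how the cofactor matrix, the right hand side, and the $L^q$ norm of the solution transform, invoking the volume estimate \eqref{vol-sec1} and the bound $\det A_h \sim |S_\varphi(x_0,h)| \sim h^{n/2}$ at the appropriate moments. First I would recall that if $\tilde\varphi(x) = (\det A_h)^{-2/n}\varphi(Tx)$, then $D^2\tilde\varphi(x) = (\det A_h)^{-2/n} A_h^T (D^2\varphi)(Tx) A_h$, so $\det D^2\tilde\varphi = (\det A_h)^{-2}(\det A_h)^2 \det D^2\varphi = \det D^2\varphi$ evaluated at $Tx$, which gives \eqref{detD2tilde}. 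Then, writing $\tilde\Phi$ for the cofactor matrix of $D^2\tilde\varphi$, one computes $\tilde\Phi = (\det D^2\tilde\varphi)(D^2\tilde\varphi)^{-1} = (\det D^2\varphi)(\det A_h)^{2/n}A_h^{-1}(D^2\varphi)^{-1}(A_h^T)^{-1} = (\det A_h)^{2/n} A_h^{-1}\,\Phi(Tx)\,(A_h^{-1})^T$, the key identity from which everything else follows.

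Next I would substitute into the PDE. With $\tilde u(x) = u(Tx)$ we have $D^2\tilde u(x) = A_h^T (D^2 u)(Tx) A_h$, hence $\tilde\Phi^{ij}\tilde u_{ij} = \operatorname{tr}(\tilde\Phi\, D^2\tilde u) = (\det A_h)^{2/n}\operatorname{tr}\big(A_h^{-1}\Phi A_h^{-T} A_h^T D^2 u\, A_h\big) = (\det A_h)^{2/n}\operatorname{tr}(\Phi\, D^2 u)(Tx) = (\det A_h)^{2/n}(\div F)(Tx)$, using the original equation. On the other side, setting $\tilde F(x) = (\det A_h)^{2/n}A_h^{-1}F(Tx)$, the chain rule gives $\operatorname{div}_x \tilde F(x) = (\det A_h)^{2/n}A_h^{-1} : A_h^T (\nabla F)(Tx) = (\det A_h)^{2/n}(\div F)(Tx)$ (the matrix $A_h^{-1}$ dotted with $A_h^T$ acting on the Jacobian of $F$ collapses to the trace of $\nabla F$). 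Matching the two expressions yields \eqref{eqSh1}. This is just bookkeeping, but it should be written out carefully since the divergence-form structure is what makes the $A_h^{-1}$ on $\tilde F$ — rather than $A_h^T$ — the right choice.

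For the norm bounds I would argue as follows. From $\tilde F(x) = (\det A_h)^{2/n} A_h^{-1} F(Tx)$ we get pointwise $|\tilde F(x)| \le (\det A_h)^{2/n}\|A_h^{-1}\|\,\|F\|_{L^\infty(S_\varphi(x_0,h))}$, so I need $\|A_h^{-1}\| \le C (\det A_h)^{-1}\cdot(\det A_h)^{1 - 2/n}\cdot h^{?}$ in a form that produces the exponent $1 - n/2$ on $h$. Since $\tilde S = T^{-1}(S_\varphi(x_0,h))$ is normalized (i.e. $B_1(0)\subset \tilde S\subset B_n(0)$), the diameter of $S_\varphi(x_0,h)$ is comparable to $\|A_h\|$ and $\operatorname{diam}(\Omega)$ controls it from above, while $\|A_h^{-1}\|$ is controlled by $\|A_h\|^{n-1}/|\det A_h|$ and $|\det A_h| = |\tilde S|^{-1}|S_\varphi(x_0,h)| \sim |S_\varphi(x_0,h)| \sim h^{n/2}$ by \eqref{vol-sec1}; combining these, $(\det A_h)^{2/n}\|A_h^{-1}\| \lesssim h^{2/n}\cdot \operatorname{diam}(\Omega)^{n-1} h^{-n/2} = C(n,\lambda,\Lambda,\operatorname{diam}(\Omega)) h^{2/n - n/2}$. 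Hmm — this gives $h^{2/n - n/2}$, and for $n=2$ that is $h^{0}$, consistent with $h^{1-n/2} = h^0$; for general $n$ one should double-check the claimed exponent $1 - n/2$ against $2/n - n/2$, but since the lemma is only applied in dimension $n=2$ in this paper the two agree and I would simply verify the $n=2$ case, noting the statement as written. Finally, the $L^q$ bound \eqref{tilde_u_Lq} is the change-of-variables formula: $\int_{\tilde S}|\tilde u|^q\,dx = \int_{\tilde S}|u(Tx)|^q\,dx = |\det A_h|^{-1}\int_{S_\varphi(x_0,h)}|u|^q\,dy$, and then $|\det A_h|^{-1} \sim h^{-n/2}$ by \eqref{vol-sec1}, so $\|\tilde u\|_{L^q(\tilde S)} \sim h^{-n/(2q)}\|u\|_{L^q(S_\varphi(x_0,h))}$ with constants depending only on $n,\lambda,\Lambda,q$.

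The main obstacle is not conceptual — it is entirely a matter of getting the powers of $\det A_h$ and the operator norms $\|A_h\|$, $\|A_h^{-1}\|$ to line up with the stated exponents, and keeping the divergence-form transformation (the factor $A_h^{-1}$ in $\tilde F$) straight so that \eqref{eqSh1} really holds with no leftover terms. I would handle the norm estimate for $A_h^{-1}$ by combining John's lemma normalization \eqref{normSh} (which controls $\|A_h\|$ by $\operatorname{diam}(S_\varphi(x_0,h)) \le \operatorname{diam}(\Omega)$ and $\|A_h^{-1}\|$ by the inradius of $S_\varphi(x_0,h)$, equivalently $\|A_h^{-1}\| \sim \|A_h\|^{n-1}/|\det A_h|$ in two dimensions exactly) with the volume estimate \eqref{vol-sec1} to replace $|\det A_h|$ by $h^{n/2}$ up to universal constants. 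Everything else is the chain rule applied to an affine substitution.
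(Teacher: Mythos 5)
Your proposal follows essentially the same plan as the paper: affine substitution, track $\tilde\Phi$, $D^2\tilde u$, and $\div\tilde F$ by the chain rule, then control the constants via \eqref{vol-sec1} and $\det A_h \sim h^{n/2}$. The PDE transformation $\tilde\Phi^{ij}\tilde u_{ij} = (\det A_h)^{2/n}(\div F)(Tx) = \div\tilde F$ and the $L^q$ change-of-variables step \eqref{tilde_u_Lq} match the paper's proof.

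There is one arithmetic slip you should fix, and fixing it resolves the ``discrepancy'' you flagged. Since $\det A_h \sim h^{n/2}$ by \eqref{vol-sec1} and \eqref{normSh}, you have $(\det A_h)^{2/n} \sim (h^{n/2})^{2/n} = h$, not $h^{2/n}$. With the correct power, your chain of estimates reads $(\det A_h)^{2/n}\|A_h^{-1}\| \lesssim h\cdot \diam(\Omega)^{n-1}\, h^{-n/2} = C(n,\lambda,\Lambda,\diam(\Omega))\, h^{1-n/2}$, which is exactly \eqref{tFbound} for every $n\ge 2$; there is no mismatch with the stated exponent, and no need to retreat to the case $n=2$.

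The one genuine point of departure from the paper is the route to $\|A_h^{-1}\| \lesssim h^{-n/2}$. You argue by pure linear algebra: $\|A_h^{-1}\| \le \|A_h\|^{n-1}/|\det A_h|$ (from $1/\sigma_{\min}(A_h) \le \sigma_{\max}(A_h)^{n-1}/\det A_h$), then $\|A_h\|\lesssim\diam(\Omega)$ because $A_h(B_1)\subset S_\varphi(x_0,h)\subset\Omega$, and $|\det A_h|\sim h^{n/2}$ by \eqref{vol-sec1}. The paper instead proves the inclusion $B(x_0, c_1 h^{n/2}) \subset S_\varphi(x_0,h)$ by first bounding $\dist\big(S_\varphi(x_0,h),\p S_\varphi(x_0,2h)\big)$ from below via the Aleksandrov maximum principle, then using convexity to get $\|D\varphi\|_{L^\infty(S_\varphi(x_0,h))}\lesssim h^{1-n/2}$, and finally reads off $\|A_h^{-1}\|\lesssim h^{-n/2}$ from \eqref{normSh}. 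Both arguments are correct. Yours is shorter and more elementary; the paper's produces the intermediate gradient bound on $\varphi$ over the section, which is a self-contained convex-geometry fact, but it is not reused elsewhere, so nothing is lost by taking your route.
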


The proof of Lemma \ref{res_eq_lem} will be given in Section \ref{res_proof}.

\section{Proof of Theorem \ref{GSthm}}
\label{SG_sec}
In this section, we prove Theorem \ref{GSthm}.
We first collect some regularity properties of weak solutions to (\ref{GS:dual})-(\ref{SGbdr}) from previous work by Benamou-Brenier \cite{BB}, Cullen-Feldman \cite{CF}, 
Loeper \cite{Loe}; see also the lecture notes by 
Figalli \cite{F}.
\begin{thm} (\cite{BB, CF, F, Loe})
\label{thm_collect} Let $\rho^0$ be a probability measure on $\T^2$.
Suppose that 
that $ \lambda \leq \rho^0 \leq \Lambda$ in $\T^2$ for positive constants $\lambda$ and $\Lambda$. Let
$(\rho_t, P_t^{\ast})$ solve (\ref{GS:dual})-(\ref{SGbdr}).
Let $P_t$ be the Legendre transform of $P_t^{\ast}$. Then:
\begin{myindentpar}{1cm}
 (i) $\lambda \leq \rho_t\leq \Lambda$ in $\T^2$ for all $t\geq 0$,\\
 (ii) $\|U_t\|_{L^{\infty}(\T^2)}\leq \frac{\sqrt{2}}{2}$ for all $t\geq 0$,\\
 (iii) There is a positive constant $\kappa=\kappa (\lambda,\Lambda)$ such that for all $t>0$, we have $$\int_{\T^2} \rho_t(x) |\p_t \nabla P_t^{\ast}(x)|^{1+\kappa}dx\leq C(\lambda,\Lambda).$$
 (iv) For all $t>0$, $P_t$ is an Aleksandrov solution to the Monge-Amp\`ere equation $$\rho_t(\nabla P_t)\det D^2 P_t =1 \text{ on } \T^2.$$
\end{myindentpar}
\end{thm}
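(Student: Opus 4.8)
These four statements are essentially collected from \cite{BB, CF, Loe, F}; parts (i), (ii) and (iv) are of a structural nature, while (iii) carries the real quantitative content. As the paper does elsewhere, I would run the computations as if the solutions were smooth — this is justified along the time-discretization approximation of \cite{BB, CG}, and since all bounds depend only on $\lambda,\Lambda$ they survive the passage to the limit.

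\emph{Parts (i) and (ii).} A direct computation gives $\div U_t=\div\big((x-\nabla P_t^{\ast})^\perp\big)=\p_{12}P_t^{\ast}-\p_{21}P_t^{\ast}=0$, so the continuity equation becomes the transport equation $\p_t\rho_t+U_t\cdot\nabla\rho_t=0$; if $X_t$ denotes the flow of the divergence-free (hence volume-preserving) field $U_t$, conservation of mass yields $\rho_t=\rho^0\circ X_t^{-1}$, and $\lambda\le\rho^0\le\Lambda$ then forces $\lambda\le\rho_t\le\Lambda$ for all $t$ (see \cite[Proposition 5.2]{BB}, \cite[Lemma 9.1]{Loe}). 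For (ii), $|U_t(x)|=|(x-\nabla P_t^{\ast}(x))^\perp|=|x-\nabla P_t^{\ast}(x)|$; since $P_t^{\ast}-|x|^2/2$ is $\Z^2$-periodic, $\nabla P_t^{\ast}$ descends to the quadratic-cost optimal transport map of $\T^2$ and $x-\nabla P_t^{\ast}(x)$ is its (periodic) transport displacement, whose modulus is bounded by the diameter of $\T^2$ in the flat metric, namely $\sqrt 2/2$ (the maximum being realized at $(1/2,1/2)$); cf. \cite{CF}.

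\emph{Part (iii).} Set $v:=\p_t P_t^{\ast}$, $M:=M_{P_t^{\ast}}(D^2 P_t^{\ast})=\rho_t\,(D^2 P_t^{\ast})^{-1}$ and $F_t:=-\rho_t U_t$, so that by (\ref{SGLMA}) the $\Z^2$-periodic function $v$ solves $\nabla\cdot(M\nabla v)=\div F_t$ on $\T^2$, with $\|F_t\|_{L^\infty(\T^2)}\le\frac{\sqrt2}{2}\Lambda$ by (i)--(ii). Testing against $v$ over the torus and writing $F_t\cdot\nabla v=(M^{1/2}\nabla v)\cdot(M^{-1/2}F_t)$ before applying Cauchy--Schwarz,
\begin{equation*}
\int_{\T^2} M\nabla v\cdot\nabla v\,dx=\int_{\T^2} F_t\cdot\nabla v\,dx\le\Big(\int_{\T^2} M\nabla v\cdot\nabla v\,dx\Big)^{1/2}\Big(\int_{\T^2} M^{-1}F_t\cdot F_t\,dx\Big)^{1/2}.
\end{equation*}
Since $M^{-1}=\rho_t^{-1}D^2 P_t^{\ast}\le\lambda^{-1}(\Delta P_t^{\ast})I$ and $\|\Delta P_t^{\ast}\|_{L^{1+\e_\ast}(\T^2)}\le C(\lambda,\Lambda)$ by the De Philippis--Figalli--Savin and Schmidt estimate (\ref{w21est}), run on a fixed normalized cover of $\T^2$, the second integral is $\le C(\lambda,\Lambda)$, and therefore $\int_{\T^2} M\nabla v\cdot\nabla v\,dx\le C(\lambda,\Lambda)$. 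To upgrade this to the weighted bound, fix $\kappa>0$ with $\frac{1+\kappa}{1-\kappa}\le1+\e_\ast$; using $|\xi|^2\le\lambda_{\max}(A)(A^{-1}\xi\cdot\xi)$ with $A=D^2 P_t^{\ast}$ and $\xi=\nabla v$ gives, pointwise,
\begin{equation*}
\rho_t|\nabla v|^{1+\kappa}\le\big(\rho_t(D^2 P_t^{\ast})^{-1}\nabla v\cdot\nabla v\big)^{\frac{1+\kappa}{2}}\,\big(\rho_t^{\frac{1-\kappa}{2}}\lambda_{\max}(D^2 P_t^{\ast})^{\frac{1+\kappa}{2}}\big),
\end{equation*}
and integrating, then applying Hölder's inequality with exponents $\frac{2}{1+\kappa},\frac{2}{1-\kappa}$ together with $\rho_t\le\Lambda$, $\lambda_{\max}(D^2 P_t^{\ast})\le\Delta P_t^{\ast}$, $|\T^2|=1$ and $\frac{1+\kappa}{1-\kappa}\le1+\e_\ast$, yields $\int_{\T^2}\rho_t|\nabla v|^{1+\kappa}\,dx\le C(\lambda,\Lambda)$, which is (iii).

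\emph{Part (iv), and the main obstacle.} With $P_t=(P_t^{\ast})^{\ast}$, convex duality gives $\nabla P_t=(\nabla P_t^{\ast})^{-1}$ a.e., both descending to $\T^2$ since $P_t-|x|^2/2$ is $\Z^2$-periodic; reading $\det D^2 P_t^{\ast}=\rho_t$ as the statement that $\nabla P_t^{\ast}$ is the optimal map pushing $\rho_t\,dx$ to $dx$ on $\T^2$, its inverse $\nabla P_t$ is the optimal map pushing $dx$ to $\rho_t\,dx$, and the Jacobian (Monge--Amp\`ere) equation for such a map is exactly $\rho_t(\nabla P_t)\det D^2 P_t=1$ on $\T^2$ (so, in particular, $1/\Lambda\le\det D^2 P_t\le1/\lambda$); see \cite{Fi, CF}. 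The main difficulty lies entirely in (iii): the energy $\int_{\T^2}M\nabla v\cdot\nabla v$ is degenerate, and the only global control on $D^2 P_t^{\ast}$ is the $W^{2,1+\e_\ast}$ integrability with $\e_\ast=\e_\ast(\lambda,\Lambda)$ possibly close to $0$, which forces $\kappa$ small and makes the Hölder bookkeeping in the last step delicate — together with the need to run the $W^{2,1+\e}$ estimate globally on $\T^2$ and to justify the formal identities through the approximation scheme.
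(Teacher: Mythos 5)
The paper does not give its own proof of Theorem \ref{thm_collect}; it is stated as a collection of results from \cite{BB, CF, F, Loe}, so there is nothing internal to compare against. Your reconstruction is correct: (i) and (ii) are the standard transport and Cordero-Erausquin arguments, (iv) is the duality/Caffarelli argument, and (iii) — the quantitatively substantive part — is done via the energy identity on the torus plus the weighted Cauchy–Schwarz and H\"older steps. Your constraint $\tfrac{1+\kappa}{1-\kappa}\le 1+\e_\ast$, i.e.\ $\kappa\le\tfrac{\e_\ast}{2+\e_\ast}$, is exactly the relation the paper records in the Remark following Theorem \ref{Cthm} (attributed to \cite[Theorem 4.5]{F}), so your exponent bookkeeping lands in the right place. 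One small remark: in the energy bound $\int_{\T^2}M^{-1}F_t\cdot F_t\le\lambda^{-1}\|F_t\|_\infty^2\int_{\T^2}\Delta P_t^{\ast}$ you invoke the $W^{2,1+\e_\ast}$ estimate to control $\int_{\T^2}\Delta P_t^{\ast}$, but since $P_t^{\ast}-|x|^2/2$ is $\Z^2$-periodic one has $\int_{\T^2}\Delta P_t^{\ast}=2$ exactly; the De Philippis–Figalli–Savin/Schmidt estimate is genuinely needed only for the subsequent H\"older upgrade to the $L^{1+\kappa}$ bound, which is where the degeneracy of the coefficient matrix actually bites.
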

Combining the previous theorem with the known regularity results for strictly convex Aleksandrov solutions to the Monge-Amp\`ere equation, we have the following theorem.
\begin{thm} (\cite{C1, C2, C3, DPFS, Sch})
\label{Cthm}
Let $\rho^0$ be a probability measure on $\T^2$.
Suppose that 
that $ \lambda \leq \rho^0 \leq \Lambda$ in $\T^2$ for positive constants $\lambda$ and $\Lambda$. Let
$(\rho_t, P_t^{\ast})$ solve (\ref{GS:dual})-(\ref{SGbdr}) with the normalization $\int_{\T^2} P_t^\ast(x) dx=0$.
Let $P_t$ be the Legendre transform of $P_t^{\ast}$. Then:
\begin{myindentpar}{1cm}
(i) There exist universal constants $\beta=\beta(\lambda,\Lambda)\in (0, 1)$ and $C= C(\lambda,\Lambda)>0$
such that
\begin{equation*}
  \|P_t^\ast\|_{C^{1,\beta}(\T^2)} +  \|P_t\|_{C^{1,\beta}(\T^2)}  \leq C.
\end{equation*}
(ii) $P_t, P_t^{\ast}\in  L^{\infty}([0,\infty), W^{2, 1+\e_{\ast}}(\T^2))$ for some $\e_{\ast}>0$ depending only on $\lambda$ and $\Lambda$.
\end{myindentpar}
\end{thm}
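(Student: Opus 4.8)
The statement in question, Theorem \ref{Cthm}, combines the a priori estimates from Theorem \ref{thm_collect} with known interior regularity for the Monge-Amp\`ere equation. Let me sketch how I would prove it.

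\bigskip

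The plan is to feed the bounds $\lambda \le \rho_t \le \Lambda$ (Theorem \ref{thm_collect}(i)) together with the normalization $\int_{\T^2}P_t^\ast = 0$ into the Caffarelli-type interior regularity theory. First I would establish strict convexity of $P_t^\ast$: since $\rho_t$ is a doubling measure on $\T^2$ with density pinched between $\lambda$ and $\Lambda$, and $P_t^\ast - |x|^2/2$ is $\Z^2$-periodic so that $P_t^\ast$ grows quadratically at infinity, a lifting/compactness argument on the torus (as in \cite{ACDF12}) shows $P_t^\ast$ has no affine pieces on any section; hence $P_t^\ast$ is strictly convex, and by Caffarelli's $C^{1,\beta}$ theorem \cite{C2, C3} it is $C^{1,\beta}$ with the estimate depending only on $\lambda,\Lambda$ (periodicity gives a uniform geometry for the sections, so the constant is universal). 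The key point here is that passing to the torus makes the geometry scale-invariant, so all constants are $(\lambda,\Lambda)$-universal and uniform in $t$. The normalization $\int P_t^\ast = 0$ fixes the additive constant and, combined with periodicity of $P_t^\ast - |x|^2/2$, pins down $\|P_t^\ast\|_{C^{1,\beta}(\T^2)}$; the same argument applies to $P_t$ once we know its Monge-Amp\`ere measure is pinched.

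\bigskip

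For $P_t$, I would use Theorem \ref{thm_collect}(iv): $P_t$ is the Aleksandrov solution of $\rho_t(\nabla P_t)\det D^2 P_t = 1$ on $\T^2$. Since $\lambda \le \rho_t \le \Lambda$, we get $\Lambda^{-1} \le \det D^2 P_t \le \lambda^{-1}$ in the Aleksandrov sense, so $P_t$ satisfies exactly the same hypothesis \eqref{MAbound} (with $\lambda,\Lambda$ replaced by $\Lambda^{-1},\lambda^{-1}$). Then the identical strict convexity plus $C^{1,\beta}$ argument yields $\|P_t\|_{C^{1,\beta}(\T^2)} \le C(\lambda,\Lambda)$, completing part (i). For part (ii), I would apply the De Philippis-Figalli-Savin and Schmidt $W^{2,1+\e}$ estimate \eqref{w21est} (stated in the excerpt): it applies whenever the Monge-Amp\`ere measure is pinched between two positive constants, which we have for both $P_t^\ast$ and $P_t$. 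On the torus, covering $\T^2$ by finitely many normalized sections (whose number and shape are controlled universally thanks to the $C^{1,\beta}$ bound from part (i)) and summing the local $L^{1+\e_\ast}$ bounds on $D^2\varphi$ gives $P_t^\ast, P_t \in W^{2,1+\e_\ast}(\T^2)$ with norm bounded uniformly in $t$; taking the sup over $t$ gives the $L^\infty([0,\infty), W^{2,1+\e_\ast}(\T^2))$ membership.

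\bigskip

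The main obstacle is the strict convexity / uniform engulfing step on the torus: one must be careful that the periodic structure of $P_t^\ast - |x|^2/2$ does indeed force a quantitative, $t$-independent modulus of strict convexity so that sections of $P_t^\ast$ have the good doubling and compactness properties that Caffarelli's theory requires. This is where the quadratic growth at infinity of $P_t^\ast$ (equivalently, that $\nabla P_t^\ast$ maps $\R^2$ onto $\R^2$ mod $\Z^2$ in a degree-one fashion) is essential. Once strict convexity is in hand with universal constants, everything else is a routine application of the cited regularity theorems together with a covering argument; in fact all of this is standard and is the reason the theorem is attributed to the body of work \cite{C1, C2, C3, DPFS, Sch}, so I would present it concisely, citing \cite{ACDF12, F} for the torus-specific bookkeeping.
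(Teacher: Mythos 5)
Your proposal is correct and matches the paper's intended route: the paper does not supply a detailed proof of Theorem~\ref{Cthm}, presenting it instead as a compilation of the cited regularity theorems applied to the present setting, and your reconstruction — uniform-in-time pinching $\lambda\le\rho_t\le\Lambda$ from Theorem~\ref{thm_collect}(i), periodicity of $P_t^\ast - |x|^2/2$ to get a quantitative, $t$-independent modulus of strict convexity, Caffarelli interior $C^{1,\beta}$ and De Philippis--Figalli--Savin/Schmidt $W^{2,1+\e_\ast}$, and Theorem~\ref{thm_collect}(iv) to transfer the pinching $\Lambda^{-1}\le\det D^2 P_t\le\lambda^{-1}$ to the Legendre dual — is precisely the standard argument being invoked. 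The one small remark: when you write that the torus makes the geometry ``scale-invariant,'' what is actually doing the work is that periodicity of $P_t^\ast-|x|^2/2$ confines all sections $S_{P_t^\ast}(x_0,h)$ with $h\le h_0(\lambda,\Lambda)$ inside a single fundamental domain-sized ball, so the covering in part (ii) and the strict-convexity modulus in part (i) are both uniform in $t$; this is a uniform-geometry statement rather than a scaling statement, but it supports exactly the conclusions you draw.
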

\begin{rem}
By \cite[Theorem 4.5]{F}, the positive constants $\kappa$ in Theorem \ref{thm_collect} and $\e_\ast$ in Theorem \ref{Cthm} are related by
$$\kappa=\frac{\e_\ast}{2+\e_\ast}.$$
Moreover, the constant $\e_\ast$ in Theorem \ref{Cthm} can be chosen to be the same $\e_\ast$ in (\ref{w21est}) when $n=2$.
\end{rem}

\begin{proof}[Proof of Theorem \ref{GSthm}] 
By an approximation argument as in \cite{ACDF12, Loe}, we only need to establish the bounds in $L^{\infty}((0,\infty); C^{\alpha}(\T^2))$ for $\p_t P_t^\ast$ and $\p_t P_t$ when the solution $(\rho_t, P_t^\ast)$ is smooth as long as these bounds depend only on $\lambda$ and $\Lambda$. Thus, we can
assume in what follows, $\rho_t, U_t$, $P_t^{\ast}$ and $P_t$ are smooth. 

We will use $C$ to denote a generic positive constant depending only on $\lambda$ and $\Lambda$; its value may change from line to line.

By Theorem \ref{thm_collect}(i),
for all $t\geq 0$, we have 
\begin{equation}
\label{rho_ineq}
\lambda \leq \rho_t \leq \Lambda~\text{in } \T^2.
\end{equation}
Differentiating both sides of $\det D^2 P_t^{\ast} = \rho_t$ with respect to $t$, we find that $\p_t P_t^{\ast}$ solves the linearized Monge-Amp\`ere equation
\begin{equation*}
\nabla \cdot (M_{P_t^{\ast}} \nabla (\partial_t P_t^{\ast})) = \p_t \rho_t
\end{equation*}
where $M_{P_t^{\ast}}$ represents the matrix of cofactors of $D^2 P_t^{\ast}$; that is, $M_{P_t^{\ast}}= (\det D^2 P_t^\ast) (D^2 P_t^\ast)^{-1}$.

Using the first and second equations of (\ref{GS:dual}), we get
\begin{equation}
\label{GS_diff}
 \nabla \cdot (M_{P_t^{\ast}} \nabla (\partial_t P_t^{\ast}))=\div F_t
\end{equation}
where $F_t=-\rho_t U_t$ with the following bound obtained from Theorem \ref{thm_collect}(ii):
\begin{equation}
\label{Finfty}
 \|F_t\|_{L^{\infty}(\T^2)} \leq \frac{\sqrt{2}}{2}\Lambda.
\end{equation}
By Theorem \ref{thm_collect}(i, iii), we have
\begin{equation}
\label{Ptkappa}
\int_{\T^2}  |\p_t \nabla P_t^{\ast}(x)|^{1+\kappa}dx\leq C.
\end{equation}
By subtracting a constant from $P_t^{\ast}$, we can assume that for each $t\in (0,\infty)$, 
\begin{equation}\int_{\T^2} P_t^{\ast}(x)dx =0.
 \label{Pnormal}
\end{equation}
 Thus, we deduce from (\ref{Ptkappa}), (\ref{Pnormal}) and the Sobolev imbedding theorem that
\begin{equation}\|\p_t P_t^{\ast}\|_{L^2(\T^2)}\leq C.
 \label{L2bound}
\end{equation}
From (\ref{Pnormal}) and Caffarelli's global $C^{1,\beta}$ estimates \cite{C3} (see also Theorem \ref{Cthm}), we find 
universal constants $\beta=\beta(\lambda,\Lambda)\in (0, 1)$ and $C= C(\lambda,\Lambda)>0$
such that
\begin{equation}
 \label{C1alpha}
  \|P_t^\ast\|_{C^{1,\beta}(\T^2)} +  \|P_t\|_{C^{1,\beta}(\T^2)}  \leq C.
\end{equation}
Using (\ref{C1alpha}) together with the $\Z^2$-periodicity of $P_t^{\ast}-|x|^2/2$, we can find positive constants $h_0(\lambda,\Lambda)$ and $R_0(\lambda,\Lambda)$ such that
\begin{equation}\T^2\subset S_{P_t^\ast}(x_0, h_0)\subset S_{P_t^\ast}(x_0, 4h_0)\subset B_{R_0}(0)~\text{for all } x_0\in \T^2.
 \label{Sconfine}
\end{equation}
Again, using the $\Z^2$-periodicity of $P_t^{\ast}-|x|^2/2$ and $F_t$, we deduce from (\ref{Finfty}) and (\ref{L2bound}) that
\begin{equation} \|F_t\|_{L^{\infty}(B_{R_0}(0))} + \|\p_t P_t^{\ast}\|_{L^2(B_{R_0}(0))}\leq C(\lambda,\Lambda).
 \label{L2bound2}
\end{equation}
With (\ref{rho_ineq}), (\ref{Sconfine}) and (\ref{L2bound2}) in hand, we can apply 
Theorem \ref{Holder_int_thm} to (\ref{GS_diff}) in each section $S_{P_t^\ast}(x_0, 4h_0)$ with $p=2$ and $\Omega= B_{R_0}(0)$ to conclude that: For some
$\gamma=\gamma(\lambda,\Lambda)\in (0,1)$, we have
\begin{equation}
\label{tPt1}
\|\p_t P_t^{\ast}\|_{L^{\infty}((0,\infty), C^{\gamma}(\T^2))} \leq C(\lambda,
\Lambda).
\end{equation}
For the H\"older regularity of  $\partial_t P_t$, we use 
the
equation 
\begin{equation}\partial_t P_t(x) = - \partial_t P_t^{\ast}(\nabla P_t(x))~\text{for } x\in \R^2
 \label{PPt_eq}
\end{equation}
which follows from differentiating with respect to $t$ the equation
$$P_t(x)+ P_t^{\ast}(\nabla P_t(x))= x\cdot \nabla P_t(x) ~\text{for } x\in \R^2.$$
Combining (\ref{tPt1}) with (\ref{C1alpha}),
we obtain from (\ref{PPt_eq}) the following H\"older estimate for $\p_t P_t$:
$$\|\p_t P_t\|_{L^{\infty}((0,\infty), C^{\gamma\beta}(\T^2))} \leq C(\lambda,
\Lambda).$$
The proof of Theorem \ref{GSthm} is complete by setting $\alpha=\gamma\beta$.
\end{proof}

\section{Proof of Theorems \ref{Holder_int_thm}, \ref{gl_thm}, and \ref{int_thm}}
In this section, we prove Theorems \ref{Holder_int_thm}, \ref{gl_thm}, and \ref{int_thm}.
\begin{proof} [Proof of Theorem \ref{Holder_int_thm}]
The conclusion of the theorem follows from Lemma \ref{DPFS_lem} and the following oscillation estimate: 
For every $h\leq h_0$, we have 
 \begin{equation}
  \osc(u, S_\varphi(x_0, h))\leq  C(p,\lambda,\Lambda, \diam(\Omega), h_0)  \left( 
 \|F \|_{L^{\infty}(S_\varphi(x_0, 2h_0))} + \|u\|_{L^{p}(S_\varphi(x_0, 2h_0))} \right)h^{\gamma_0}
 \label{key_osc}
 \end{equation}
where $\gamma_0\in (0, 1)$ depends only on $\lambda$ and $\Lambda$. Here and what follows, we use the following notation for a function $f$ defined on a set $E$:
$$
\osc(f, E):= \sup\limits_{E} f - \inf\limits_{E} f.
$$
Indeed, suppose (\ref{key_osc}) is established. For each $x\in S_{\varphi}(x_0, h_0)\backslash \{x_0\}$, we can find some $h\in (0, h_0]$ such that $x\in \p S_{\varphi}(x_0, h)$.
By the mean value theorem, we can find some $z$ in the interval from $x_0$ to $x$ such that
$$h=\varphi(x)-\varphi(x_0)-\nabla \varphi(x_0)\cdot (x-x_0)= (\nabla \varphi(z)-\nabla \varphi(x_0))\cdot (x-x_0).$$
The $C^{1,\alpha}$ estimate in Lemma \ref{DPFS_lem} applied to $z, x_0\in S_{\varphi}(x_0, h_0)$ then gives
$$h\leq C(\lambda,\Lambda, \diam(\Omega), h_0)|z-x_0|^{\alpha}|x-x_0|\leq C(\lambda,\Lambda, \diam(\Omega), h_0)|x-x_0|^{1+ \alpha}.$$
Using (\ref{key_osc}), we find that
\begin{eqnarray*}
 |u(x)-u(x_0)|&\leq&  C(p,\lambda,\Lambda, \diam(\Omega), h_0) (\|F \|_{L^{\infty}(S_\varphi(x_0, 2h_0))} + \|u\|_{L^{p}(S_\varphi(x_0, 2h_0))} )h^{\gamma_0}\\
 & \leq& C(p,\lambda,\Lambda, \diam(\Omega), h_0)  \left( 
 \|F \|_{L^{\infty}(S_\varphi(x_0, 2h_0))} + \|u\|_{L^{p}(S_\varphi(x_0, 2h_0))} \right)|x-x_0|^{\gamma}
\end{eqnarray*}
where
$\gamma=\gamma_0(1+\alpha).$ The conclusion of Theorem \ref{Holder_int_thm} follows.

It remains to prove (\ref{key_osc}).
On the section $S_{\varphi}(x_0, h)$ with $h\leq h_0$, we split $u$ as $u= v + w$ where
\begin{equation*}
\left\{\begin{array}{rl}
\Phi^{ij} v_{ij} & =\div F \quad \mbox{ in}\quad S_\varphi(x_0, h),\\
 v &=0 \qquad \quad \,\mbox{on}\quad \partial S_\varphi(x_0, h),
\end{array}\right.
\end{equation*}
and
\begin{equation*}
\left\{\begin{array}{rl}
\Phi^{ij} w_{ij} & = 0\quad\quad \quad \mbox{ in}\quad S_\varphi(x_0, h),\\
 w &=u \qquad \quad \,\mbox{on}\quad \partial S_\varphi(x_0, h).
\end{array}\right.
\end{equation*}
By Theorem \ref{gl_thm} applied to the equation for $v$, we can find a universal constant $\delta=\delta(\lambda,\Lambda)$ such that 
\begin{equation}\label{vSh}
\sup_{S_\varphi(x_0, h)} |v|\leq C(\lambda,\Lambda, \diam(\Omega), h_0) \|F \|_{L^{\infty}(S_\varphi(x_0, h))} h^{\delta}.
\end{equation}
On the other hand, as a consequence of Theorem \ref{CGthm} (see also the Corollary in \cite[p.455]{CG97}), we have
from the homogeneous linearized Monge-Amp\`ere equation for $w$ that
\begin{equation*}
\osc(w, S_\varphi(x_0, h/2)) \leq \beta \osc(w, S_\varphi(x_0, h))
\end{equation*}
for some $\beta=\beta(\lambda, \Lambda) \in (0,1)$.
Therefore,  
\begin{eqnarray}
\label{uvwsplit}
\osc(u, S_\varphi(x_0, h/2)) &\leq& \osc(w, S_\varphi(x_0, h/2)) + \osc(v, S_\varphi(x_0, h/2))\nonumber\\
 &\leq& \beta \osc(w, S_\varphi(x_0, h)) + 2 \norm{v}_{L^\infty(S_\varphi(x_0, h/2))}.
\end{eqnarray}
Using the maximum principle to $\Phi^{ij} w_{ij}=0$, we have
\begin{eqnarray*}
 \osc(w, S_\varphi(x_0, h)) = \osc(w, \p S_\varphi(x_0, h))=\osc(u, \p S_\varphi(x_0, h))\leq \osc(u, S_\varphi(x_0, h)).
\end{eqnarray*}
Together with (\ref{uvwsplit}) and (\ref{vSh}), we find for every $h \leq h_0$ 
$$\osc(u, S_\varphi(x_0, h/2))\leq \beta \osc(u, S_\varphi(x_0, h)) + C(\lambda,\Lambda, \diam(\Omega), h_0) \|F \|_{L^{\infty}(S_\varphi(x_0, h_0))} h^{\delta}.$$
Hence, by a standard argument (see, for example, Han-Lin \cite[Lemma 4.19]{HL}), we have for all $h\leq h_0$
\begin{eqnarray}
\label{osc_est1}
\osc(u, S_\varphi(x_0, h)) &\leq& C\left(\frac{h}{h_0} \right)^{\gamma_0}  \left(  \osc(u, S_\varphi(x_0, h_0)) + 
C_1(\lambda,\Lambda, \diam(\Omega), h_0) \|F \|_{L^{\infty}(S_\varphi(x_0, h_0))} h_0^{\delta} \right) 
 \nonumber\\&\leq&  C \left(\frac{h}{h_0} \right)^{\gamma_0}  \left(2\|u\|_{L^{\infty}(S_\varphi(x_0, h_0))}+ 
C_1\|F \|_{L^{\infty}(S_\varphi(x_0, h_0))} h_0^{\delta} \right) 
\end{eqnarray}
for a structural constant $\gamma_0=\gamma_0(\lambda,\Lambda) \in (0,1)$ and some constant $C_1= C_1(\lambda,\Lambda,\diam(\Omega), h_0)$. \\
By Theorem \ref{int_thm}, we have
$$\|u\|_{L^{\infty}(S_\varphi(x_0, h_0))}\leq C(p, \lambda,\Lambda,\diam(\Omega))
\left(\|F\|_{L^{\infty}(S_\varphi(x_0, 2h_0))} + h_0^{-1/p}\|u\|_{L^{p}(S_\varphi(x_0, 2h_0))}\right).$$
The above estimate combined with (\ref{osc_est1}) gives (\ref{key_osc}). The proof of Theorem \ref{Holder_int_thm} is complete. 
\end{proof}
\label{LMA_est_proof_sec}
\begin{proof}[Proof of Theorem \ref{gl_thm}]
 Let $g_S(x, y)$ be Green's function of the operator $L_\varphi=-\p_j(\Phi^{ij}\p_i)=-\Phi^{ij}\p_{ij}$ on $S= S_\varphi(x_0, h)$, that is, $g_S$ satisfies \eqref{Green:S}. Then, by using that $u$ solves $\Phi^{ij} u_{ij}=\nabla \cdot F$ with $u=0$ on $\partial S$, 
 we get
 $$
 u(y)=-\int_{S}g_S(y, x) \nabla \cdot F (x) dx \quad \text{for all }y \in S.
 $$
 Using symmetry of Green's function and 
 integrating by parts, we obtain for all $y\in S$
 \begin{equation}\label{rep:u:F}
 u(y)=-\int_{S}g_S(x, y) \nabla \cdot F (x) dx = \int_{S} \langle \nabla_x g_S(x, y), F (x) \rangle dx.
 \end{equation}
 It follows that for all $y\in S$, we have
 \begin{eqnarray*}
 |u(y)|\leq \|F\|_{L^{\infty}(S)} \int_{S}| \nabla_x g_S(x, y)| dx \leq \|F\|_{L^{\infty}(S)} \left(\int_{S}| 
 \nabla_x g_S(x, y)|^{1+\kappa} dx\right)^{\frac{1}{1+\kappa}} |S|^{\frac{\kappa}{1+\kappa}}.
 \end{eqnarray*}
 From the $L^{1+\kappa}$-bound for $\nabla g_S$ in Proposition \ref{G2_thm}, we have
$$\left(\int_{S}| \nabla_x g_S(x, y)|^{1+\kappa} dx\right)^{\frac{1}{1+\kappa}}\leq C(\lambda,\Lambda, \diam(\Omega), h_0) h^{\kappa_1}$$
 Thus, by the volume estimates for sections in \eqref{vol-sec1}, we  obtain the asserted $L^{\infty}(S)$ bound  for $u$ from
$$ \|u\|_{L^{\infty}(S)}\leq C(\lambda,\Lambda, \diam(\Omega), h_0) \|F\|_{L^{\infty}(S)} h^{\kappa_1}|S|^{\frac{\kappa}{1+\kappa}}\leq C(\lambda,\Lambda, \diam(\Omega), h_0) \|F\|_{L^{\infty}(S)}
h^{\kappa_1 + \frac{\kappa}{1+\kappa}}.$$
\end{proof}
The rest of this section is devoted to the proof of  Theorem \ref{int_thm} using Moser's iteration.
The key step is to prove the theorem when the section $S_{\varphi}(x_0, h)$ is normalized (that is, when it is comparable to the unit ball) and when we have a high integrability of the solution. This is the content of Proposition
\ref{int_thm_v2}. After this, the theorem easily follows from a rescaling argument.
\begin{prop}\label{int_thm_v2} 
Assume $n=2$. Let $\varphi \in C^2(\Omega)$ be a convex function satisfying (\ref{MAbound}).
Let $F:\Omega\rightarrow \R^n$ is a bounded vector field.
There exist universally large constants $C_0 > 1$ and $p_0 > 2$ depending only on $\lambda$ and $\Lambda$ such 
 that for every solution $u$ of (\ref{main_eq}) in a section $S_\varphi(x_0, 2)\subset\subset\Omega$ with
 $B_1(0)\subset S_\varphi (x_0, 1)\subset B_2(0),$ 
 we have
\begin{equation}
  \sup_{S_\varphi(x_0, 1/2)} |u|\leq C_0 \left(  \|u\|_{L^{p_0}(S_\varphi(x_0, 1))}+ \|F\|_{L^{\infty}(S_\varphi(x_0, 1))}\right).
  \label{int_ineq_v2}
 \end{equation}
\end{prop}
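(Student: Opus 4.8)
The plan is to establish \eqref{int_ineq_v2} by a Moser iteration argument adapted to the degenerate operator $\Phi^{ij}\partial_{ij}$, using the Monge-Amp\`ere Sobolev inequality from Proposition \ref{sob_ineq} as the key functional-analytic input. First I would fix the normalized section $S := S_\varphi(x_0,1)$ with $B_1(0)\subset S \subset B_2(0)$ and work with the divergence form $\nabla\cdot(\Phi\nabla u)=\div F$, which is legitimate since $\Phi$ is divergence free. Given an exponent $\beta \ge 1$ and a cutoff function $\eta \in C^1_0(S)$, the natural test function is $w = \eta^2 |u|^{2(\beta-1)} u$ (a standard Moser choice, with appropriate truncation of the power of $|u|$ to stay in $C^1_0$ rigorously, which I would gloss over). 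Testing the equation against $w$ and integrating by parts produces, after using the Cauchy-Schwarz inequality adapted to the positive matrix $\Phi$ (i.e. $2\Phi^{ij}a_i b_j \le \epsilon \Phi^{ij}a_ia_j + \epsilon^{-1}\Phi^{ij}b_ib_j$), a Caccioppoli-type estimate of the form
\begin{equation*}
\int_S \Phi^{ij} \partial_i(\eta u^\beta)\partial_j(\eta u^\beta)\,dx \le C\beta^2 \left( \int_S |\nabla\eta|^2 \Phi^{ij}\cdot\text{(bounds)}\, u^{2\beta}\,dx + \|F\|_{L^\infty(S)}^2 \int_{\mathrm{supp}\,\eta} |u|^{2\beta-2}\,dx + \dots \right).
\end{equation*}
Here one must absorb the gradient term coming from $F$; the term $\int \eta^2 \Phi^{ij}\partial_i(u^\beta)\partial_j(\dots)$ is absorbed into the left side, while the terms involving $\nabla\eta$ and $F$ remain. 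The quantities $\Phi^{ij}\partial_i\eta\,\partial_j\eta$ are controlled because $\Phi \in L^{1+\e_\ast}_{\mathrm{loc}}$ on the normalized section by the $W^{2,1+\e_\ast}$ estimate \eqref{w21est} (here Lemma \ref{DPFS_lem} or \eqref{w21est} directly applies since the section is normalized), so with a H\"older inequality one converts such terms into $L^q$ norms of $u^\beta$ for some $q>1$ that depends only on $\e_\ast$, hence only on $\lambda,\Lambda$.

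Next I would feed the Caccioppoli estimate into the Sobolev inequality: choosing $p = 2 + \e_\ast'$ suitably in Proposition \ref{sob_ineq} (any $p>2$ works, and we want $p$ close enough to $2$ so that its H\"older-conjugate balances the $L^{1+\e_\ast}$ integrability of $\Phi$), we get
\begin{equation*}
\left(\int_{S'} |\eta u^\beta|^{p}\,dx\right)^{1/p} \le K \left(\int_S \Phi^{ij}\partial_i(\eta u^\beta)\partial_j(\eta u^\beta)\,dx\right)^{1/2},
\end{equation*}
which combined with the previous step yields a reverse-H\"older / gain-of-integrability inequality
\begin{equation*}
\|u\|_{L^{p\beta s}(S_{r'})} \le (C\beta)^{1/\beta}(r-r')^{-2/\beta}\left(\|u\|_{L^{2\beta s}(S_r)} + \|F\|_{L^\infty(S)}\right)
\end{equation*}
for nested sections $S_{r'}\subset S_r$ between $S_\varphi(x_0,1/2)$ and $S_\varphi(x_0,1)$, where $s = (1+\e_\ast)/\e_\ast$ or a similar fixed exponent recording the loss from the $\Phi$-integrability, and the genuine improvement factor is $\theta := p/(2s) > 1$ provided $p$ is chosen so that $p > 2s/( \text{something})$ — ensuring $\theta>1$ is exactly the point where the smallness of $\e_\ast$ matters and where $p$ must be picked carefully. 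Iterating over $\beta_k = \theta^k \beta_0$ with $\beta_0$ chosen so that $2\beta_0 s = p_0$ for a large universal $p_0 > 2$, and summing the geometric-type series $\sum \theta^{-k}\log(C\theta^k) < \infty$ and $\sum \theta^{-k}|r_k - r_{k+1}|^{-1}$-contributions (using $r_k = 1/2 + 2^{-k-1}$), gives in the limit $\sup_{S_\varphi(x_0,1/2)}|u| \le C_0(\|u\|_{L^{p_0}(S_\varphi(x_0,1))} + \|F\|_{L^\infty(S_\varphi(x_0,1))})$, which is \eqref{int_ineq_v2}. One should also handle $u$ of arbitrary sign by running the iteration on $u^+$ and $u^-$ separately, or by replacing $|u|$ with $(|u| + \|F\|_{L^\infty})$ throughout to keep the additive $F$-term homogeneous.

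The main obstacle I anticipate is twofold: first, controlling the inhomogeneous term $\div F$ in the Caccioppoli estimate without picking up extra powers of $u$ that would break the iteration — this requires Young's inequality splitting the $F$-contribution into a piece absorbed by the left-hand Dirichlet energy and a piece of the form $\|F\|_{L^\infty}^2\int |u|^{2\beta-2}$ which one then re-estimates by the $L^{2\beta s}$ norm via H\"older on the normalized (finite-measure) section, absorbing the lower-order $\|F\|$ power into the final constant; and second, and more essentially, verifying that the integrability exponent $1+\e_\ast$ of $\Phi$ on the \emph{normalized} section, combined with the exponent $p$ available in the Sobolev inequality (Proposition \ref{sob_ineq}, valid for all $p \in (2,\infty)$ but with $K = K(\lambda,\Lambda,p)$), leaves a genuine iteration gain $\theta > 1$. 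Since Proposition \ref{sob_ineq} permits $p$ arbitrarily large while the H\"older pairing against $\Phi^{1+\e_\ast}$ only costs a fixed finite exponent, $\theta$ can indeed be made $>1$ for suitable fixed $p$, and $p_0$ and $C_0$ then come out depending only on $\lambda,\Lambda$. This is why the result is genuinely two-dimensional in spirit only through Propositions \ref{G2_thm} and \ref{sob_ineq}; the iteration itself is the classical De Giorgi–Nash–Moser scheme.
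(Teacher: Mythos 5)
Your overall scheme -- Moser iteration driven by the Monge--Amp\`ere Sobolev inequality of Proposition \ref{sob_ineq}, with $L^{1+\e_\ast}$ control of $\Phi$ and $\Delta\varphi$ supplied by \eqref{w21est} -- is exactly the paper's strategy. But there is a genuine gap in the one step you flag as an obstacle, namely the treatment of the $F$-term, and the fix you propose does not work as stated.

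The problematic term is $\int_S F\cdot\nabla u\,|u|^{\beta}\eta^{2}\,dx$. You propose a Young's inequality split into ``a piece absorbed by the left-hand Dirichlet energy and a piece of the form $\|F\|_{L^{\infty}}^{2}\int|u|^{2\beta-2}$.'' The Dirichlet energy you have on the left is $\int\Phi^{ij}u_{i}u_{j}|u|^{\beta}\eta^{2}$, and the elementary Young's inequality produces instead $\int|\nabla u|^{2}|u|^{\beta}\eta^{2}$, which is \emph{not} dominated by the $\Phi$-weighted Dirichlet energy: $\Phi$ is degenerate and its smallest eigenvalue can be arbitrarily close to $0$. The missing ingredient, which the paper uses and which is specific to the linearized Monge--Amp\`ere setting (it appears already in \cite[Lemma 2.1]{CG97}), is the pointwise inequality
\begin{equation*}
\Phi^{ij}v_{i}v_{j}\;\geq\;\frac{\det D^{2}\varphi}{\Delta\varphi}\,|\nabla v|^{2},
\end{equation*}
so that, using $\det D^{2}\varphi\geq\lambda$, one has $|\nabla u|\leq\lambda^{-1/2}\bigl(\Phi^{ij}u_{i}u_{j}\,\Delta\varphi\bigr)^{1/2}$. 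With that bridge, the $F$-term becomes $\lambda^{-1/2}\|F\|_{\infty}\int\bigl(\Phi^{ij}u_{i}u_{j}\Delta\varphi\bigr)^{1/2}|u|^{\beta}\eta^{2}$, and Cauchy--Schwarz yields a piece $\e\int\Phi^{ij}u_{i}u_{j}|u|^{\beta}\eta^{2}$ that is absorbed \emph{plus} a leftover of the form $C(\lambda)\|F\|_{\infty}^{2}\int\Delta\varphi\,|u|^{\beta}\eta^{2}$ -- crucially weighted by $\Delta\varphi$, not just $\|F\|_{\infty}^{2}\int|u|^{2\beta-2}$. That weighted leftover is then handled by H\"older against $\|\Delta\varphi\|_{L^{1+\e_\ast}}$, the same mechanism you already invoke for the cutoff term $\int\Phi^{ij}\eta_{i}\eta_{j}|u|^{\beta+2}$. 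Once this inequality is inserted, your iteration closes exactly as in the paper (with $p_{0}=2(1+\e_\ast)/\e_\ast$ and the Sobolev exponent $q=4(1+\e_\ast)/\e_\ast$), so the gap is the absence of this one pointwise lemma rather than a flaw in the overall architecture.
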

Note that, by the volume estimates (\ref{vol-sec1}), any normalized section $S_{\varphi}(x_0, h)\subset\subset\Omega$
has height $h$ with $c(\lambda,\Lambda)\leq h\leq C(\lambda,\Lambda)$. Our proof of Proposition \ref{int_thm_v2} works for all these $h$. However, to simplify the presentation, we choose to work with $h=1$ in Proposition \ref{int_thm_v2}.

By combining Proposition \ref{int_thm_v2} and Lemma \ref{res_eq_lem} we immediately obtain:

\begin{cor}\label{int_thm_v3}
Assume $n=2$. Let $\varphi \in C^2(\Omega)$ be a convex function satisfying (\ref{MAbound}).
Let $F:\Omega\rightarrow \R^2$ is a bounded vector field.
There exist a universal constant $p_0=p_0(\lambda,\Lambda)$  and a constant $C_1$ depending only on $\lambda,\Lambda$ and $\emph{diam}(\Omega)$ such that for every 
solution $u$ of \eqref{main_eq} in a section 
$S_\varphi(x_0, 2h) \subset \subset \Omega$, we have
\begin{equation}
  \sup_{S_\varphi(x_0, h/2)} |u| \leq  C_1\left(\|F\|_{L^{\infty}(S_\varphi(x_0, h))} + h^{-\frac{n}{2p_0}}
  \|u\|_{L^{p_0}(S_\varphi(x_0, h))}\right).
  \label{int_ineq_v3}
 \end{equation}
\end{cor}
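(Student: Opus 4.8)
The plan is to deduce Corollary \ref{int_thm_v3} from Proposition \ref{int_thm_v2} by the affine normalization recorded in Lemma \ref{res_eq_lem}, so the argument is essentially a change of variables together with the scaling identities \eqref{tFbound} and \eqref{tilde_u_Lq}. First I would fix a section $S_\varphi(x_0,h)$ with $S_\varphi(x_0,2h)\subset\subset\Omega$ and, as in Section \ref{sec:prelim}, subtract from $\varphi$ the affine function $\varphi(x_0)+\nabla\varphi(x_0)\cdot(x-x_0)+h$, so that $\varphi=0$ on $\partial S_\varphi(x_0,h)$ and $\varphi$ attains its minimum $-h$ at $x_0$. This subtraction leaves the cofactor matrix $\Phi$, the equation \eqref{main_eq}, and every quantity appearing in \eqref{int_ineq_v3} unchanged. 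Then I choose the affine map $Tx=A_hx+b_h$ of \eqref{normSh} and define $\tilde\varphi,\tilde u,\tilde F$ by \eqref{def:tildes}. By Lemma \ref{res_eq_lem}, $\tilde u$ solves $\tilde\Phi^{ij}\tilde u_{ij}=\div\tilde F$ in $\tilde S:=T^{-1}(S_\varphi(x_0,h))=S_{\tilde\varphi}(\tilde x_0,\tilde h)$, where (since $n=2$) $\tilde h:=(\det A_h)^{-1}h$, one has $B_1(0)\subset\tilde S\subset B_2(0)$ by \eqref{detD2tilde} and the line after it, and $\tilde\varphi\in C^2$ still satisfies \eqref{MAbound}.

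Next I apply Proposition \ref{int_thm_v2} to $\tilde u$ on $\tilde S$, obtaining
\[
  \sup_{S_{\tilde\varphi}(\tilde x_0,\tilde h/2)}|\tilde u|
  \le C_0\left(\|\tilde u\|_{L^{p_0}(\tilde S)}+\|\tilde F\|_{L^{\infty}(\tilde S)}\right)
\]
with $C_0,p_0$ universal. Strictly speaking, Proposition \ref{int_thm_v2} is phrased for a normalized section of height exactly $1$, while $\tilde h$ is only comparable to $1$; by the volume estimates \eqref{vol-sec1} one has $c(\lambda,\Lambda)\le\tilde h\le C(\lambda,\Lambda)$, and by the remark following Proposition \ref{int_thm_v2} its proof applies verbatim to every such height (alternatively, compose with a trivial dilation $x\mapsto\mu x$ that normalizes the height to $1$, which only multiplies the constants by universal factors). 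A direct computation as in Section \ref{sec:prelim} shows $S_{\tilde\varphi}(\tilde x_0,\tilde h/2)=T^{-1}(S_\varphi(x_0,h/2))$, so the left-hand side equals $\sup_{S_\varphi(x_0,h/2)}|u|$ because $\tilde u=u\circ T$.

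Finally I translate the two terms on the right back through $T$. By \eqref{tFbound} with $n=2$ the factor $h^{1-n/2}$ is $1$, so $\|\tilde F\|_{L^{\infty}(\tilde S)}\le C(\lambda,\Lambda,\diam(\Omega))\|F\|_{L^{\infty}(S_\varphi(x_0,h))}$; and by \eqref{tilde_u_Lq} with $q=p_0$ one has $\|\tilde u\|_{L^{p_0}(\tilde S)}\le C(n,\lambda,\Lambda,p_0)\,h^{-n/(2p_0)}\|u\|_{L^{p_0}(S_\varphi(x_0,h))}$. Substituting these into the displayed inequality yields exactly \eqref{int_ineq_v3} with $p_0$ universal and $C_1=C_1(\lambda,\Lambda,\diam(\Omega))$. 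Since this is a bookkeeping argument built on results already proved, I do not anticipate a serious obstacle; the only point needing care is matching the height $\tilde h\sim1$ of the rescaled section to the height $1$ in the statement of Proposition \ref{int_thm_v2}, which is exactly what the remark after that proposition handles, and keeping track of the $\diam(\Omega)$-dependence entering through the constant in \eqref{tFbound}.
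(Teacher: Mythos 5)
Your argument is correct and is exactly the paper's intended derivation: the paper simply says the corollary follows "by combining Proposition \ref{int_thm_v2} and Lemma \ref{res_eq_lem}," and you have carried out the affine normalization, applied the proposition to the rescaled problem (noting correctly that the remark after Proposition \ref{int_thm_v2} handles the height $\tilde h\sim 1$), and translated back via \eqref{tFbound} and \eqref{tilde_u_Lq}. No gaps.
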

 Now, with Corollary \ref{int_thm_v3}, we are ready to give the proof of Theorem \ref{int_thm}.

\begin{proof}[Proof of Theorem \ref{int_thm}] We show that 
(\ref{main_int_ineq}) follows from (\ref{int_ineq_v3}).
The proof is based on a simple rescaling argument as in the classical proof of the local boundedness of solutions to uniformly elliptic equations 
(see, for example, \cite[Theorem 4.1]{HL}) with Euclidean balls replaced by sections of $\varphi$.

By \cite[Theorem 3.3.10(i)]{G01}, there exist universal constants $c_1 >0$ and $\mu >0$ such that for every $\theta \in (0,1)$ and $y \in S_\varphi(x_0, \theta h)$ we have the inclusion
\begin{equation}\label{inc:Sy:Sx0}
S_\varphi(y, c_1 (1-\theta)^{\mu} h) \subset S_\varphi(x_0, h).
\end{equation}
Then, by applying \eqref{int_ineq_v3} to $u$ on the section $S_\varphi(y, c_1 (1-\theta)^{\mu} h)$ we obtain
\begin{eqnarray*}
|u(y)| &\leq& \sup_{S_\varphi(y, c_1 (1-\theta)^{\mu} h/2)} |u| \\ &\leq&  C_1 (\lambda,\Lambda,\text{diam}(\Omega))\left(\|F\|_{L^{\infty}(S_\varphi(y, c_1 (1-\theta)^{\mu} h))}  +
(1-\theta)^{-\frac{\mu}{p_0}}h^{-\frac{1}{p_0}}\|u\|_{L^{p_0}(S_\varphi(y, c_1 (1-\theta)^{\mu} h))}\right).
 \end{eqnarray*}
Varying $y\in  S_{\varphi}(x_0, \theta h)$ and recalling  \eqref{inc:Sy:Sx0}, we obtain
\begin{equation}\label{mvi_theta}
\norm{u}_{L^\infty(S_\varphi(x_0, \theta h))}  \leq 
C_1 (\lambda,\Lambda,\text{diam}(\Omega))\left(\|F\|_{L^{\infty}(S_\varphi(x_0, h))}  +
(1-\theta)^{-\frac{\mu}{p_0}}h^{-\frac{1}{p_0}}\|u\|_{L^{p_0}(S_\varphi(x_0, h))}\right).
\end{equation}
Now, given $p \in (1, p_0)$ we obtain from \eqref{mvi_theta} the estimate
\begin{equation*}
\norm{u}_{L^\infty(S_\varphi(x_0, \theta h))} \leq
 C_1\left(\|F\|_{L^{\infty}(S_\varphi(x_0, h))}  +
((1-\theta)^{\mu} h)^{-\frac{1}{p_0}}\norm{u}_{L^\infty(S_\varphi(x_0, h))}^{1-\frac{p}{p_0}} \|u\|^{\frac{p}{p_0}}_{L^{p}(S_\varphi(x_0, h))}\right).
\end{equation*}
By Young's inequality with two exponents $p_0/p$ and $p_0/(p_0 - p)$, we have
\begin{multline*}
 C_1((1-\theta)^{\mu} h)^{-\frac{1}{p_0}}\norm{u}_{L^\infty(S_\varphi(x_0, h))}^{1-\frac{p}{p_0}} \|u\|^{\frac{p}{p_0}}_{L^{p}(S_\varphi(x_0, h))}=
 \norm{u}_{L^\infty(S_\varphi(x_0, h))}^{1-\frac{p}{p_0}} C_1((1-\theta)^{\mu} h)^{-\frac{1}{p_0}}
 \|u\|^{\frac{p}{p_0}}_{L^{p}(S_\varphi(x_0, h))}\\ \leq (1-\frac{p}{p_0})\norm{u}_{L^\infty(S_\varphi(x_0, h))}
 + \frac{p}{p_0}C_1^{\frac{p_0}{p}} ((1-\theta)^{\mu} h)^{-\frac{1}{p}}\|u\|_{L^{p}(S_\varphi(x_0, h))}.
\end{multline*}
Hence, for every $\theta \in (0,1)$ we have for a constant $C_2$ depending only on $\lambda,\Lambda, \text{diam}(\Omega)$
\begin{align*}
\norm{u}_{L^\infty(S_\varphi(x_0, \theta h))} \leq (1-\frac{p}{p_0}) \norm{u}_{L^\infty(S_\varphi(x_0, h))} 
 +  C_2 \left(\|F\|_{L^{\infty}(S_\varphi(x_0, h))} +  (1-\theta)^{-\frac{\mu}{p}} h^{-\frac{1}{p}}\|u\|_{L^{p}(S_\varphi(x_0, h))}\right).
\end{align*}
It is now standard (see \cite[Lemma 4.3]{HL}) that 
for every $p \in (1, p_0)$, we get
\begin{align*}
\norm{u}_{L^\infty(S_\varphi(x_0, \theta h))} \leq 
C_3 \left(\|F\|_{L^{\infty}(S_\varphi(x_0, h))} +  (1-\theta)^{-\frac{\mu}{p}}h^{-\frac{1}{p}}\|u\|_{L^{p}(S_\varphi(x_0, h))}\right)
\end{align*}
for a constant $C_3$ depending only on $p, \lambda,\Lambda$ and $\text{diam}(\Omega)$. Theorem \ref{int_thm} follows from the above estimate by setting $\theta=1/2$.
\end{proof}
To complete the proof of Theorem \ref{int_thm}, it remains to prove
Proposition \ref{int_thm_v2}.
\begin{proof}[Proof of Proposition \ref{int_thm_v2}] 
Let $\e=\e_\ast(\lambda,\Lambda)>0$ be the universal constant in De Philippis-Figalli-Savin and Schmidt's $W^{2, 1+\e}$ estimate; see \cite{DPFS, Sch} and (\ref{w21est}).
Then, by the convexity of $\varphi$, we have in two dimensions 
\begin{equation}\|\Phi\|_{L^{1+\e}(S_\varphi(x_0, 1))}= \|D^2 \varphi\|_{L^{1+\e}(S_\varphi(x_0, 1))} \leq \|\Delta\varphi\|_{L^{1+\e}(S_\varphi(x_0, 1))}\leq C(\lambda,\Lambda).
 \label{W21}
\end{equation} 
We prove the proposition for a large constant $C_0$ depending only on $\lambda$ and $\Lambda$ and
$$p_0:=2 \frac{\e+1}{\e}.$$
 By the homogeneity of \eqref{main_eq}, we can assume that
 \begin{equation}\|F\|_{L^{\infty}(S_\varphi(x_0, 1))} + \|u\|_{L^{p_0}(S_\varphi(x_0, 1))}\leq 1.
  \label{Fbound}
 \end{equation}
In order to prove \eqref{int_ineq_v2}, we then need to show that,  for some universal constant $C_0 > 0$, we have
 \begin{equation}
 \label{uboundC}
 \sup_{S_\varphi(x_0, 1/2)} |u|\leq C_0.
 \end{equation}
We will use Moser's iteration to prove the proposition. Given $r \in (0, 1]$, let us put $$S_r:= S_\varphi(x_0, r) ~\text{and } S:= S_1=S_\varphi(x_0, 1).$$
Let $\eta\in C^1_0(S)$ be a cut-off function to be determined later. Let $\beta\geq 0$.  By testing (\ref{main_eq}) against $|u|^{\beta} u \eta^2$ using its divergence form (\ref{main_eq_div}), we get
 \begin{eqnarray}
  \int_S F \cdot \nabla (|u|^{\beta} u \eta^2)dx&=& \int_S \Phi^{ij} u_i (|u|^{\beta} u \eta^2)_jdx\nonumber \\&=&  (\beta+1)\int_S \Phi^{ij} u_i u_j |u|^{\beta}  \eta^2 dx + 
  2\int_S \Phi^{ij} u_i \eta_j \eta |u|^\beta u dx.
  \label{Ftest}
 \end{eqnarray}
 Next, the Cauchy-Schwarz inequality gives
\begin{eqnarray}
 \left|2\int_S \Phi^{ij} u_i \eta_j \eta |u|^\beta u dx\right|&\leq& 2 \left(\int_S\Phi^{ij}u_i u_j |u|^{\beta} \eta^2 dx\right)^{1/2} \left(\int_S\Phi^{ij}\eta_i\eta_j 
 |u|^{\beta+2}dx\right)^{1/2}\nonumber\\
 &\leq& \frac{1}{2}\int_S\Phi^{ij}u_i u_j |u|^{\beta} \eta^2 dx + 2\int_S\Phi^{ij}\eta_i \eta_j |u|^{\beta+2} dx.
 \label{CS_ineq}
\end{eqnarray}
It follows from (\ref{Ftest}) that
 \begin{equation}(\beta +\tfrac{1}{2})\int_S\Phi^{ij}u_i u_j |u|^{\beta} \eta^2 dx - 2\int_S\Phi^{ij}\eta_i \eta_j |u|^{\beta+2}dx\leq  
 \int_S F \cdot \nabla (|u|^{\beta} u \eta^2)dx:=M.
 \label{eq_test1}
 \end{equation}
 We now handle the right hand side $M$ of (\ref{eq_test1}). First,
 using (\ref{Fbound}), we have
\begin{eqnarray*}
  M=\int_S F \cdot \nabla (|u|^{\beta} u \eta^2) dx&=& (\beta + 1)\int_S F \cdot \nabla u |u|^{\beta}  \eta^2 dx +  2\int_S F \cdot \nabla \eta \eta |u|^{\beta} u dx
   \\ &\leq&  (\beta + 1)\int_S |\nabla u| |u|^{\beta}  \eta^2 dx +  2\int_S |\nabla \eta| \eta |u|^{\beta+1} dx
\end{eqnarray*}
Second, using $\det D^2\varphi\geq\lambda$ and the
 following inequality
$\Phi^{ij} v_{i}(x) v_{j}(x)  \geq \frac{\det D^2 \varphi |\nabla v|^2}{\Delta \varphi}
$
whose simple proof can be found in \cite[Lemma 2.1]{CG97}, we deduce that
\begin{equation*}
 M \leq 
  (\beta + 1)\int_S \lambda^{-\frac{1}{2}} \left(\Phi^{ij} u_i u_j \Delta \varphi\right)^{\frac{1}{2}}|u|^{\beta}  \eta^2 dx +  2\int_S \lambda^{-\frac{1}{2}} 
   \left(\Phi^{ij} \eta_i \eta_j \Delta \varphi\right)^{\frac{1}{2}}\eta |u|^{\beta+1}dx.
\end{equation*}
Using the Cauchy-Schwartz inequality, we obtain
\begin{eqnarray*}M &\leq& \frac{\beta+ 1}{4}\int_S\Phi^{ij}u_i u_j |u|^{\beta} \eta^2 dx + 
C(\lambda)(\beta+1) \int_S \Delta\varphi |u|^{\beta} \eta^2 dx\\
&+& C(\lambda)\int_S \Delta\varphi |u|^{\beta} \eta^2 dx + \int_S\Phi^{ij}\eta_i \eta_j |u|^{\beta+2} dx
\\&\leq & \frac{\beta+ 1}{4}\int_S\Phi^{ij}u_i u_j |u|^{\beta} \eta^2 dx + 
C(\lambda)(\beta+1) \int_S \Delta\varphi |u|^{\beta} \eta^2 dx + \int_S\Phi^{ij}\eta_i \eta_j |u|^{\beta+2} dx.
 \end{eqnarray*}
It follows from (\ref{eq_test1}) that
\begin{equation}
 \label{eq_test6}
  \frac{\beta+ 2}{8}\int_S\Phi^{ij}u_i u_j |u|^{\beta} \eta^2 dx\leq 
C(\lambda)(\beta+2) \int_S \Delta\varphi |u|^{\beta} \eta^2 dx + 3\int_S\Phi^{ij}\eta_i \eta_j |u|^{\beta+2} dx.
\end{equation}
Consider the quantity
\begin{eqnarray}
\label{Qdef}
 Q&:=& \int_S \Phi^{ij} (|u|^{\beta/2} u \eta)_i (|u|^{\beta/2} u \eta)_j dx \nonumber\\ &=& \frac{(\beta+2)^2}{4} \int_S\Phi^{ij}u_i u_j |u|^{\beta} \eta^2 dx +
 (\beta +2 )\int_S \Phi^{ij} u_i\eta_j \eta
 |u|^{\beta} u dx + \int_S\Phi^{ij}\eta_i \eta_j |u|^{\beta+2} dx.
\end{eqnarray}
Using (\ref{CS_ineq}), we obtain
\begin{multline*}
 Q\leq  \frac{(\beta+2)^2}{4} \int_S\Phi^{ij}u_i u_j |u|^{\beta} \eta^2 dx + \frac{\beta+2}{4}\int_S\Phi^{ij}u_i u_j |u|^{\beta} \eta^2 dx\\ + 
 (\beta+2)\int_S\Phi^{ij}\eta_i \eta_j |u|^{\beta+2} dx + \int_S\Phi^{ij}\eta_i \eta_j |u|^{\beta+2} dx\\
 \leq (\beta+2)^2 \int_S\Phi^{ij}u_i u_j |u|^{\beta} \eta^2 dx + (\beta +4)\int_S\Phi^{ij}\eta_i \eta_j |u|^{\beta+2} dx.
\end{multline*}
By (\ref{eq_test6}), we have
\begin{eqnarray*}
 \label{eq_test7}
 \frac{24}{\beta +2} \int_S\Phi^{ij}\eta_i \eta_j |u|^{\beta+2} dx + C(\lambda) \int_S \Delta\varphi |u|^{\beta} \eta^2 dx &\geq& \int_S\Phi^{ij}u_i u_j |u|^{\beta}\eta^2 dx
 \\ &\geq &  \left[Q- (\beta+4)\int_S\Phi^{ij}\eta_i  \eta_j |u|^{\beta+2} dx\right]\frac{1}{(\beta+2)^2}. 
\end{eqnarray*}
Therefore, we have 
\begin{equation}
  24(\beta +2) \int_S\Phi^{ij}\eta_i \eta_j |u|^{\beta+2} dx + C(\lambda) (\beta+2)^2 \int_S \Delta\varphi |u|^{\beta} \eta^2 dx\geq Q.
  \label{eq_test3}
\end{equation}
We will bound from above each term on the left hand side of (\ref{eq_test3}).
Using H\"older's inequality and (\ref{W21}), we get
\begin{equation}
\label{Q1ineq}
 \int_S\Phi^{ij}\eta_i \eta_j |u|^{\beta+2} dx\leq \|\nabla \eta\|^2_{L^\infty(S)} \|\Phi\|_{L^{1+\e}(S)} \|u\|^{\beta+2}_{L^{\frac{(\beta+2)(\e+1)}{\e}}(S)}
 \leq C\|\nabla \eta\|^2_{L^\infty(S)} \|u\|^{\beta+2}_{L^{\frac{(\beta+2)(\e+1)}{\e}}(S)}
\end{equation}
and
\begin{multline}
\label{Q2ineq}
 \int_S \Delta\varphi |u|^{\beta} \eta^2 dx\leq C\|\eta\|^2_{L^\infty(S)} \|u\|^{\beta}_{L^{\frac{\beta(\e+1)}{\e}}(S)}\|\Delta\varphi\|_{L^{1+\e}(S)} \\
 \leq C\|\eta\|^2_{L^\infty(S)} |S|^{\frac{2\e}{(\e+1)(\beta+2)}}
 \|u\|^{\beta}_{L^{\frac{(\beta+2)(\e+1)}{\e}}(S)}
 \leq C\|\eta\|^2_{L^\infty(S)}
 \|u\|^{\beta}_{L^{\frac{(\beta+2)(\e+1)}{\e}}(S)}.
\end{multline}

We now apply the Sobolev inequality in Proposition \ref{sob_ineq} to the function $w= |u|^{\beta/2}u \eta$ and the exponent $q= 4 \frac{\e+1}{\e}$. We then have from the definition of $Q$ in (\ref{Qdef}) that
$$Q\geq C\|w\|_{L^q(S)}^2= C\left(\int_S |u|^{2(\beta+2)\frac{\e+1}{\e}} \eta^q dx\right)^{2/q}.$$
Thus, invoking (\ref{Q1ineq}) and (\ref{Q2ineq}), we obtain from (\ref{eq_test3}) the estimate
\begin{equation*}
\left(\int_S |u|^{2(\beta+2)\frac{\e+1}{\e}} \eta^q dx\right)^{2/q} \leq C (\beta +2)\|\nabla \eta\|^2_{L^{\infty}(S)} \|u\|^{\beta+2}_{L^{\frac{(\beta+2)(\e+1)}{\e}}(S)}
+  C(\beta+2)^2\|\eta\|^2_{L^\infty(S)}
 \|u\|^{\beta}_{L^{\frac{(\beta+2)(\e+1)}{\e}}(S)}.
\end{equation*}
Let $\gamma := (\beta+2) \frac{\e+1}{\e}.$ Then
\begin{equation}
 \left(\int_S |u|^{2\gamma}\eta^q dx\right)^{\frac{\beta+2}{2\gamma}}\leq C\gamma^2\max\{\|\nabla \eta\|^2_{L^{\infty}(S)},\|\eta\|^2_{L^{\infty}(S)}\} \max\left\{1, \|u\|^{\beta+2}_{L^{\gamma}(S)}\right\}.
 \label{eq_test4}
\end{equation}
Now, it is time to select the cut-off function $\eta$ in (\ref{eq_test4}). Assume that $0<r<R\leq 1$. 
Using the Aleksandrov maximum principle \cite[Theorem 1.4.2]{G01}, we find that 
\begin{equation}
\label{drR}
\dist(S_r,\p S_R)\geq c(\lambda,\Lambda)(R-r)^{\alpha},~\alpha=2.
\end{equation}
Indeed, by subtracting $R+ \varphi(x_0)+ \nabla\varphi(x_0)\cdot (x-x_0)$ from $\varphi(x)$, we can assume that $\varphi=0$ on $\p S_R$. Thus, $\varphi=-(R-r)$ on $\p S_r$. By the Aleksandrov maximum principle,
we have for any $x\in \p S_r$
\begin{eqnarray*}
(R-r)^2 =|\varphi(x)|^2 &\leq& C \dist(x,\p S_R) \diam(S_R)\int_{S_R} \det D^2 \varphi (x) dx\\
&\leq& C(\Lambda) \dist(x,\p S_R) \diam(S_R)|S_R| \leq C(n,\Lambda) \dist(x,\p S_R).
\end{eqnarray*}
Therefore, we obtain (\ref{drR}) as claimed.

With (\ref{drR}), 
we can choose a cut-off function $\eta\equiv 1$ in $S_r$, $\eta=0$ outside $S_R$, $0\leq \eta\leq 1$
and $$\|\nabla \eta\|_{L^{\infty}(S)}\leq \frac{C(\lambda,\Lambda)}{(R-r)^{\alpha}}.$$ 
It follows from (\ref{eq_test4}) that
\begin{eqnarray}
\label{ineq_iter}
\max\{1, \|u\|_{L^{2\gamma}(S_r)}\}&\leq& [C\gamma^2 (R-r)^{-2\alpha}]^{\frac{1}{\beta+ 2}}\max\{1, \|u\|_{L^{\gamma}(S_R)}\}
\nonumber\\ &=&[C\gamma^2 (R-r)^{-2\alpha}]^{\frac{\e+1}{\e \gamma}}\max\{1,\|u\|_{L^{\gamma}(S_R)}\}.
\end{eqnarray}
Now, for a  nonnegative integer $j$, set 
$$
r_{j}:= \frac{1}{2} + \frac{1}{2^j}, \quad \gamma_{j}:= 2^j\gamma_0, \quad \text{where} \quad \gamma_0:= p_0=2(\e+1)/\e.
$$
Then $r_j- r_{j+1}= \frac{1}{2^{j+1}}$. Applying the estimate (\ref{ineq_iter}) to $R= r_j$, $r= r_{j+1}$ and $\gamma=\gamma_j$, we get
\begin{eqnarray*}
\max\{1,\|u\|_{L^{\gamma_{j+1}}(S_{r_{j+1}})}\} &\leq& [C\gamma_j^2 (r_j-r_{j+1})^{-2\alpha}]^{\frac{\e+1}{\e \gamma_j}}\max\{1, \|u\|_{L^{\gamma_j}(S_{r_j})}\}
\\&\leq& [C\gamma_0^2 16^{j(\alpha+1)}]^{\frac{\e+1}{\e 2^j\gamma_0}}\max\{1, \|u\|_{L^{\gamma_j}(S_{r_j})}\}.
\end{eqnarray*}
By iterating, we obtain for all nonnegative integer $j$
\begin{eqnarray*}
\displaystyle\max\{1,\|u\|_{L^{\gamma_{j+1}}(S_{r_{j+1}})}\}
&\leq& \displaystyle C^{\sum_{j=0}^{\infty} \frac{\e+1}{\e 2^j\gamma_0}}16^{\sum_{j=0}^{\infty} \frac{(\e+1)(\alpha+1)j}{\e\gamma_0 2^j} }\max\{1,\|u\|_{L^{\gamma_0}(S_{r_0})}\}\\ &=:& C_0 
\max\{1, \|u\|_{L^{\gamma_0}(S_{r_0})}\} = C_0\max\{1,  \|u\|_{L^{p_0}(S)}\}=C_0.
\end{eqnarray*}
Letting $j\rightarrow \infty$ in the above inequality, we obtain (\ref{uboundC}).  The proof of Proposition \ref{int_thm_v2} is complete.
\end{proof}

\section{Regularity for Polar Factorization of time-dependent maps in two dimensions}
\label{app_sec}

In this section we use Theorem \ref{Holder_int_thm} to prove the local H\"older regularity for the polar 
factorization of time-dependent maps in two dimensions with densities bounded away from zero and infinity. 
Our applications improve previous work by Loeper who considered the cases of densities sufficiently close to a positive constant.  Our presentation in this section closely follows \cite{Loe}.

Throughout, we use $|E|$ to denote the Lebesgue measure of a Lebesgue measurable set $E\subset\R^n$.
\subsection{Polar factorization}
Let us start with the polar factorization. The polar factorization of vector-valued mappings was introduced by Brenier in his influential paper \cite{Br}. 
He showed that given a bounded open set $\Omega$ of $\R^n$ (which we can assume that $0\in\Omega$)
such that $|\partial\Omega|=0$, 
every Lebesgue measurable 
mapping $X\in L^2(\Omega; \R^n)$  satisfying the 
non-degeneracy condition 
\begin{equation}\label{2N}
|X^{-1}(B)|=0 ~\text{for all measurable~} B\subset\R^n \text{ with } |B|=0
\end{equation}
can be factorized into:
\begin{equation}\label{2polar}
X=\nabla P\circ  g,
\end{equation}
where $P$ is a convex function defined uniquely up to an additive constant and $g:\Omega\rightarrow\Omega$ is a Lebesgue-measure preserving mapping of $\Omega$; that is, 
\begin{equation}\label{2meas-pres}
\int_{\Omega}f(g(x)) \ dx =\int_{\Omega}f(x) \ dx~\text{for all  }f \in C_b(\Omega), 
\end{equation}
where $C_b$ is the set of bounded continuous functions.

If $\mathcal{L}_{\Omega}$ denotes the Lebesgue measure of $\Omega$, the push-forward of  $\mathcal{L}_{\Omega}$ by $X$, that we denote $X\# \mathcal{L}_{\Omega}$, is the measure $\rho$ defined by
\begin{equation}\label{2defrho}
 \int_{\R^n} f(x) d\rho(x)=\int_{\Omega} f(X(y))dy~\text{for all } f\in C_b(\R^n). 
\end{equation}
One can see that the condition (\ref{2N}) is equivalent to the absolute continuity of $\rho$ with respect to the Lebesgue measure, or $\rho\in L^1(\R^n, dx)$.

By (\ref{2polar})-(\ref{2defrho}), $P$ is a {\it Brenier solution} to the
Monge-Amp\`ere equation
$$ 
\rho(\nabla P(x))\det D^2 P(x)=1 ~\text{in }\Omega,
$$
that is,
\begin{equation}\label{2defPhi}
\int_{\Omega}\Psi(\nabla P(y))dy =\int_{\R^n} \Psi(x)d\rho(x) \text{ for all } \Psi\in C_b(\R^n).
\end{equation}
Moreover, $P$ satisfies the following second boundary condition
\begin{equation}
\label{SBC}
\nabla P(\Omega)=\Omega^{\ast}
\end{equation}
where $\Omega^{\ast}$ is the support of $\rho$.

Let us denote by 
$P^{\ast}$ the Legendre transform of $P$; that is, $P^{\ast}$ is defined by
\begin{equation}\label{2deflegendre}
P^{\ast}(y) = \sup_{x\in \Omega} \{x\cdot y -P(x)\}.
\end{equation}
Then $P^{\ast}$ is a Brenier solution to 
the Monge-Amp\`ere equation 
$$
\det D^2 P^{\ast}(x)=\rho(x) ~\text{in } \Omega^{\ast},
$$
that is,
\begin{equation}
\label{2defPsi}
 \int_{\R^n} f(\nabla P^{\ast}(x))d\rho(x)=\int_\Omega f(y)dy \text{ for all }f\in C_b(\Omega).
\end{equation}
Moreover, $P^{\ast}$ satisfies the following second boundary condition
\begin{equation}
\label{SBC2}
\nabla P^{\ast}(\Omega^\ast)=\Omega.
\end{equation}
Note that the Brenier solution to the Monge-Amp\`ere equation
is in general not the Aleksandrov solution. However, Caffarelli showed in \cite{C2} that if $\Omega^{\ast}$ is 
convex then $P$ is an Aleksandrov solution to $\rho(\nabla P(x))\det D^2 P(x)=1. $

In \cite{Loe}, Loeper investigated the regularity of the polar factorization of time-dependent maps $X_t\in L^2(\Omega;\R^n)$ where $t$ belongs to some open interval
$I\subset\R$. The open, bounded set $\Omega \subset \R^n$ is now assumed further to be smooth, strictly convex and has Lebesgue measure one.

As above, we assume that for each $t\in I$, $X_t$ satisfies (\ref{2N}).
For each $t\in I$, let $d\rho_t=X_t\# \mathcal{L}_{\Omega}$ be as in (\ref{2defrho}). Then, from $|\Omega|=1$, we find that $\rho_t$ is a probability measure on $\R^n$.

Let $P_t$ and $P_t^{\ast}$ be as in (\ref{2defPhi}) and (\ref{2defPsi}). 
Since $P_t$ is defined up to a constant, we impose the condition
\begin{equation}\label{mean:Phi}
\int_\Omega P_t( x) \dx = 0 \quad \text{for all }t \in I
\end{equation}
to guarantee uniqueness. 
Consider the function $g_t$ in the polar decomposition of $X_t$ as in (\ref{2polar}), that is
\begin{equation}\label{fact:X:Phi:g}
X_t(x)= \nabla P_t(g_t(x)) \quad \text{for all }x \in \Omega,
\end{equation}
$g_t: \Omega \to \Omega$ is a Lebesgue-measure preserving mapping. 
For each $t \in I$, the convex function $P_t$ is a Brenier solution to the following Monge-Amp\`ere equation in $\Omega$
\begin{equation}\label{MA:Phi}
\rho_t(\nabla P_t) \det D^2 P_t = 1.
\end{equation}
On the other hand, $P_t^\ast$ is a Brenier solution to 
the Monge-Amp\`ere equation 
\begin{equation}
\det D^2 P_t^{\ast} = \rho_t~\text{in}~\Omega_t^*:= \nabla P_t(\Omega)
\label{psit_eq}
\end{equation}
with the boundary condition $\nabla P_t^{\ast}(\Omega_t^\ast)=\Omega$.

In \cite{Loe}, Loeper investigated the regularity  of the curve $t\rightarrow (g_t, P_t, P_t^{\ast})$ under the assumptions:
\begin{myindentpar}{1cm}
$\bullet$ $X_t$ and $\partial_t X_t$ belong to $L^\infty(I \times \Omega)$;\\
$\bullet$ $\rho_t$ belongs to $L^{\infty}(I\times \R^n)$. 
\end{myindentpar}
We note that in this case $$\Omega_t^{\ast}\subset B_{R^{\ast}}(0)\equiv B_{R^*}~ \text{where } R^{\ast}=\|X_t\|_{L^{\infty}(I\times \Omega)}.$$
Several results were obtained in \cite{Loe}.
Among other results, Loeper proved (see \cite[Theorems 2.1, 2.3 and 2.3]{Loe}):
\begin{myindentpar}{1cm}
 1. For a.e. $t\in I$, $\p_t g_t$ and $\p_t \nabla P_t$ are bounded measures in $\Omega$. In particular, letting $\mathcal{M}(\Omega)$ denote 
 the set of vector-valued bounded measures on $\Omega$, we have
\begin{equation*}
\|\partial_t \nabla P_t\|_{\mathcal{M}(\Omega)}\leq C(R^{\ast},n,\Omega)\|\rho_t\|^{\frac{1}{2}}_{L^{\infty}(I\times B_{R^{\ast}})}
\|\partial_t X_t\|_{L^{\infty}(I\times \Omega)}.
\end{equation*}
2. The H\"older continuity of  $\partial_t P_t^{\ast}$ 
under the additional assumption that the density $\rho_t$ is sufficiently close to a positive constant.\\
3. The H\"older continuity of  $\p_t P_t$  
under the additional assumptions that $\Omega_t^{\ast}$ is convex  and the density $\rho_t$ is sufficiently close to a positive constant.
\end{myindentpar}

In Theorem \ref{H_thm_1} below, we are able to obtain 
the local H\"older regularity for $\partial_t P_t^{\ast}$ and $\p_t P_t$ {\bf in two dimensions} without assuming the closeness to 1 of the density 
$\rho_t$. Instead, we just assume it to be bounded away from zero and infinity, that is, for some positive constants $\lambda$ and $\Lambda$, we have
\begin{equation}\label{rho:lambdas}
\lambda \leq \rho_t\leq \Lambda~\text{on }\Omega_t^\ast~\text{for all }t\in I.
\end{equation}

\begin{thm}
[H\"older regularity of polar 
factorization of time-dependent maps in two dimensions]
\label{H_thm_1} 
Let $n=2$. Let $\Omega$ be a smooth, strictly convex set in $\R^2$ with $|\Omega|=1$. Let $I\subset\R$ be an open interval.
Assume that $X_t$ satisfies (\ref{2N}), $X_t$ and $\p_t X_t$ belong to $L^\infty(I \times \Omega)$ with
$R^{\ast}=\|X_t\|_{L^{\infty}(I\times \Omega)}$. Let $d\rho_t=X_t\# \mathcal{L}_{\Omega}$.
 Assume that the density $\rho_t$ satisfies \eqref{rho:lambdas}.
 Let $P_t$ and $P_t^{\ast}$ be as in (\ref{2defPhi}) and (\ref{2defPsi}). Assume (\ref{mean:Phi}) holds.
 Then 
\begin{myindentpar}{1cm}
(i) For any $\omega^\ast \subset\subset \Omega_t^{\ast}$, $\partial_t P_t^{\ast} \in C^{\alpha}(\omega^*)$ for some constant $\alpha =\alpha(\lambda,\Lambda)
\in (0,1)$  with 
$$\|\p_t P_t^{\ast}\|_{C^{\alpha}(\omega^\ast)}\leq C(\lambda,\Lambda, R^{\ast}, \emph{dist} (\omega^\ast, \p\Omega_t^\ast),  \|\nabla P_t^\ast\|_{C^{\delta}(\hat\omega)}, \Omega)$$
where $\delta =\delta(\lambda,\Lambda)
\in (0,1)$. 
Here $\hat w$ is the set of points in $\Omega_t^{\ast}$ of distance at least $\frac{1}{2}(\omega^\ast, \p\Omega_t^\ast)$ from $\p\Omega_t^\ast$.\\
(ii) If $\Omega_t^{\ast}$ is convex then for any $\omega\subset\subset\Omega$, $\p_t P_t \in C^{\beta}(\omega)$  for some constant
$\beta=\beta(\lambda,\Lambda) \in (0,1)$ with
$$\|\p_t P_t\|_{C^{\beta}(\omega)}\leq C(\lambda,\Lambda, R^{\ast}, \emph{dist} (\omega^\ast, \p\Omega_t^\ast), \emph{dist}(\omega,\p\Omega),\Omega_t^{\ast},\Omega).$$
\end{myindentpar}
\end{thm}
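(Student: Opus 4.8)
The plan is to follow the strategy of Loeper \cite{Loe}, with his use of the Murthy--Stampacchia and Trudinger estimates for degenerate elliptic equations replaced by the interior H\"older estimate of Theorem \ref{Holder_int_thm}; this is exactly what lets us drop the closeness of $\rho_t$ to a constant. Since $\Omega$ is convex, Caffarelli's theorem \cite{C2} guarantees that $P_t^\ast$ is an Aleksandrov solution of $\det D^2 P_t^\ast = \rho_t$ in $\Omega_t^\ast$; together with $\lambda\le\rho_t\le\Lambda$ this gives, by De Philippis--Figalli--Savin and Schmidt \cite{DPFS, Sch}, that $P_t^\ast$ is strictly convex, $C^{1,\delta}_{\mathrm{loc}}$ and $W^{2,1+\varepsilon_\ast}_{\mathrm{loc}}$ in $\Omega_t^\ast$. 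By the standard smoothing of the data (as in the proof of Theorem \ref{GSthm}), it suffices to derive the stated estimates assuming $X_t$, $\rho_t$, $P_t$ and $P_t^\ast$ are smooth, provided the bounds depend only on the listed quantities.

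First I would produce the linearized Monge--Amp\`ere equation for $\partial_t P_t^\ast$. Differentiating $\int \varphi\, d\rho_t = \int_\Omega \varphi(X_t(y))\,dy$ in $t$ shows that the vector-valued measure $B\mapsto \int_{X_t^{-1}(B)}\partial_t X_t\,dy$ is absolutely continuous with respect to $\rho_t$, with density bounded by $\|\partial_t X_t\|_{L^\infty(I\times\Omega)}$; writing this density as $-F_t/\rho_t$ produces a field $F_t$ on $\Omega_t^\ast$ with $\|F_t\|_{L^\infty(\Omega_t^\ast)}\le \Lambda\,\|\partial_t X_t\|_{L^\infty(I\times\Omega)}$ and the continuity equation $\partial_t\rho_t = \div F_t$ in $\Omega_t^\ast$. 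Differentiating $\det D^2 P_t^\ast = \rho_t$ in $t$ and using that the cofactor matrix $M_{P_t^\ast}$ is divergence free, we obtain $\nabla\cdot(M_{P_t^\ast}\nabla\partial_t P_t^\ast) = \div F_t$ in $\Omega_t^\ast$, which is of the form \eqref{main_eq}--\eqref{main_eq_div} with $\lambda\le\det D^2 P_t^\ast\le\Lambda$.

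Next I would supply the $L^p$ control of $\partial_t P_t^\ast$ that Theorem \ref{Holder_int_thm} requires. As in \cite{Loe}, the bounds on $\partial_t X_t$ give that $\partial_t\nabla P_t^\ast$ is a bounded vector measure on every $\omega'\subset\subset\Omega_t^\ast$, with a quantitative bound in terms of $R^\ast$, $\|\partial_t X_t\|_{L^\infty}$, $\Lambda$ and $\Omega$; combined with the normalization $\int_{\Omega_t^\ast}\partial_t P_t^\ast\,\rho_t\,dx = 0$, which follows from $\int_\Omega P_t = 0$ by the change of variables $\nabla P_t:\Omega\to\Omega_t^\ast$, and the embedding $BV\hookrightarrow L^2$ in two dimensions, this yields $\|\partial_t P_t^\ast\|_{L^2(\omega')}\le C$. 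For $\omega^\ast\subset\subset\Omega_t^\ast$, the $C^{1,\delta}$ estimate for $P_t^\ast$ on $\hat\omega$ controls the geometry of sections: there is $h_0>0$, depending only on $\lambda,\Lambda,\dist(\omega^\ast,\partial\Omega_t^\ast)$ and $\|\nabla P_t^\ast\|_{C^\delta(\hat\omega)}$, such that $S_{P_t^\ast}(x_0,4h_0)\subset\subset\Omega_t^\ast$ for all $x_0\in\omega^\ast$, and these sections sit inside the fixed ball $B_{R^\ast}$. Covering $\omega^\ast$ by finitely many such sections and applying Theorem \ref{Holder_int_thm} with $u=\partial_t P_t^\ast$, $F=F_t$, $p=2$ and $\Omega=B_{R^\ast}$ gives $\partial_t P_t^\ast\in C^\alpha(\omega^\ast)$ with the dependence claimed in (i); passing to the limit in the smoothing removes the a priori $C^2$ assumption.

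For (ii), differentiating the identity $P_t(x)+P_t^\ast(\nabla P_t(x)) = x\cdot\nabla P_t(x)$ in $t$ and using $\nabla P_t^\ast\circ\nabla P_t = \mathrm{id}$ gives $\partial_t P_t(x) = -(\partial_t P_t^\ast)(\nabla P_t(x))$. When $\Omega_t^\ast$ is convex, $P_t$ is an Aleksandrov solution of $\rho_t(\nabla P_t)\det D^2 P_t = 1$ with $\det D^2 P_t\in[\Lambda^{-1},\lambda^{-1}]$, so $\nabla P_t\in C^\delta_{\mathrm{loc}}(\Omega)$ and, being a homeomorphism of $\Omega$ onto $\Omega_t^\ast$, it carries $\omega\subset\subset\Omega$ into some $\omega^\ast\subset\subset\Omega_t^\ast$; composing the $C^\alpha$ bound for $\partial_t P_t^\ast$ from (i) with the $C^\delta$ bound for $\nabla P_t$ gives $\partial_t P_t\in C^{\alpha\delta}(\omega)$ with the asserted estimate. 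The only genuinely new point beyond Loeper's scheme is the use of Theorem \ref{Holder_int_thm}; I expect the main technical care to lie in the third paragraph --- choosing a single height $h_0$ that works uniformly over $\omega^\ast$ (this is where the interior $C^{1,\delta}$ regularity of $P_t^\ast$ enters) and transcribing Loeper's measure bound on $\partial_t\nabla P_t^\ast$ into the $L^2_{\mathrm{loc}}$ input demanded by the theorem.
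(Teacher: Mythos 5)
Your strategy is the same as the paper's: replace Loeper's use of the Murthy--Stampacchia/Trudinger estimates by Theorem \ref{Holder_int_thm} applied to the linearized Monge--Amp\`ere equation $\nabla\cdot(M_{P_t^\ast}\nabla\partial_t P_t^\ast)=\div F_t$ with $F_t=-\rho_t v_t$, use the interior $C^{1,\delta}$ regularity of $P_t^\ast$ to pick a single section height $h_0$ over $\omega^\ast$, and deduce (ii) from (i) via $\partial_t P_t = -\partial_t P_t^\ast\circ\nabla P_t$. Your derivation of the continuity equation directly from differentiating $\int f\,d\rho_t=\int_\Omega f(X_t)\,dy$ rather than quoting Loeper's explicit formula for $v_t$ is a harmless (and arguably cleaner) variation; it delivers the same $L^\infty$ bound $\|F_t\|_{L^\infty}\le\Lambda\|\partial_t X_t\|_{L^\infty}$.

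The one place where your sketch does not quite close is the $L^2$ control of $\partial_t P_t^\ast$. You propose to get it from a BV bound on $\partial_t\nabla P_t^\ast$ over $\omega'\subset\subset\Omega_t^\ast$, together with the weighted mean-zero condition $\int_{\Omega_t^\ast}\partial_t P_t^\ast\,\rho_t\,dx=0$, via $BV\hookrightarrow L^2$. Two problems: first, Loeper's Theorem 2.1 in \cite{Loe} gives the bounded-measure estimate for $\partial_t\nabla P_t$ on $\Omega$, not for $\partial_t\nabla P_t^\ast$ on subsets of $\Omega_t^\ast$; second, even granting such a bound, a local BV estimate on $\omega'$ paired with a mean-zero condition over the larger, $t$-dependent domain $\Omega_t^\ast$ does not directly yield a Poincar\'e--Sobolev inequality on $\omega'$ --- the domains of the derivative bound and of the normalization must match. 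The paper instead works entirely on the fixed domain $\Omega$: from $\int_\Omega P_t=0$ one gets $\int_\Omega\partial_t P_t=0$, the Poincar\'e--Sobolev embedding $W^{1,1}(\Omega)\hookrightarrow L^2(\Omega)$ on the convex set $\Omega\subset\R^2$ gives $\|\partial_t P_t\|_{L^2(\Omega)}\le C(\Omega)\int_\Omega|\nabla\partial_t P_t|$, the latter is bounded by Loeper's Theorem 2.1, and finally the Legendre identity plus the change of variables $y=\nabla P_t(x)$ (with Jacobian $\rho_t$) transfers this to $\int_{\Omega_t^\ast}|\partial_t P_t^\ast|^2\,d\rho_t=\|\partial_t P_t\|^2_{L^2(\Omega)}$, from which $\rho_t\ge\lambda$ gives the unweighted $L^2(\Omega_t^\ast)$ bound. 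You should reorganize this step along those lines; once that is done, the rest of your argument matches the paper.
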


\begin{proof}[Proof of Theorem \ref{H_thm_1}] 
Since $P_t^\ast$ is a Brenier solution to (\ref{psit_eq}) on $\Omega_t^\ast$ with the boundary condition $\nabla P_t^\ast(\Omega_t^\ast)=\Omega$ where $\Omega$ is convex, we deduce from (\ref{rho:lambdas})
and Caffarelli's regularity results for the Monge-Amp\`ere equation \cite{C1,C2} that $P_t^\ast$ is locally $C^{1,\delta}$ with $\delta\in (0,1)$ depends only on $\Lambda/\lambda$. Note that $P_t$ is not $C^1$ in general.\\
(i) In \cite[Section 4]{Loe}, Loeper constructed an adequate smooth approximation of the polar factorization problem for time-dependent maps when 
 $X_t$ and $\partial_t X_t$ belong to $L^\infty(I \times \Omega)$ and  $\rho_t$ belongs to $L^{\infty}(I\times \R^n)$. Thus, we will assume in what follows, all functions $P_t$ and $P_t^\ast$ are 
 smooth. However, our estimates will 
not depend on the
smoothness.

Differentiating both sides of (\ref{psit_eq}) with 
respect to t, we obtain the following linearized Monge-Amp\`ere for $\p_t P_t^{\ast}$
$$
\nabla \cdot (M_{P_t^{\ast}} \nabla (\partial_t P_t^{\ast})) = \p_t \rho_t
$$
where $M_{P_t^{\ast}}$ represents the matrix of cofactors of $D^2 P_t^{\ast}$; that is, $M_{P_t^{\ast}}=(\det D^2 P_t^{\ast})(D^2 P_t^\ast)^{-1}$.\\
Loeper's important insight (see \cite[Section 4]{Loe}) is that $\rho_t$ satisfies a continuity equation of the form
$$
\partial_t\rho_t + \div(\rho_t v_t) = 0
$$
where $v_t$ is a smooth vector field on $\R^2$
and
\begin{equation}\label{bound:v}
\norm{v_t }_{L^\infty(\R^2, d\rho_t)} \leq \norm{\p_t X_t}_{L^\infty(\Omega)}.
\end{equation}
 In fact, $v_t$ can be computed explicitly via $g_t$, $P_t^{\ast}$ and $P_t$ by the formula (see, \cite[p. 345]{Loe} )
\begin{equation*}
v_t= \p_t \nabla P_t(\nabla P_t^{\ast}) + D^2 P_t w_t(\nabla P_t^{\ast})~\text{where}~ w_t(x)=\p_t g_t(g_t^{-1}(x)).
\end{equation*}
Therefore,  $\p_t P_t^{\ast}$ satisfies the linearized Monge-Amp\`ere equation
\begin{equation}
\label{LMApsi}
\nabla \cdot (M_{P_t^{\ast}} \nabla (\partial_t P_t^{\ast})) = \div (-\rho_t v_t).
\end{equation}
We now divide the proof into several steps.\\
{\it Step 1:} Given a section $S_{P_t^{\ast}}(x_0, 4h_0) \subset \subset \Omega_t^*$ and $x \in S_{P_t^\ast}(x_0, h_0)$, 
by Theorem \ref{Holder_int_thm} applied to (\ref{LMApsi}), we can find a constant $\gamma\in (0,1)$ depending only on $\lambda$ and $\Lambda$ such that
 \begin{multline}
 \label{HPt}
  |\p_t  P_t^{\ast}(x)-\p_t  P_t^{\ast}(x_0)|\\ \leq  C(\lambda,\Lambda, \diam(\Omega_t^{\ast}), h_0)  \left( \|\p_t P_t^{\ast}\|_{L^{2}(S_{P_t^{\ast}}(x_0, 2h_0))}+ 
 \|\rho_t v_t \|_{L^{\infty}(S_{P_t^{\ast}}(x_0, 2h_0))} \right)|x-x_0|^{\gamma}.
 \end{multline}
Since $P_t$ is the Legendre transform of $P_t^\ast$, we have
$$P_t(x)+ P_t^{\ast}(\nabla P_t(x))= x\cdot \nabla P_t(x)~\text{in }\Omega.$$
Differentiating both sides of the above equation with respect to $t$, and using $\nabla P_t^\ast(\nabla P_t(x))=x$, we obtain
\begin{equation}\label{ptPP}
\partial_t P_t (x)= - \partial_t P_t^{\ast}(\nabla P_t(x))~~\text{in }\Omega.
\end{equation}
By changing variables $y:=\nabla P_t(x)$ we have from (\ref{MA:Phi}) that
\begin{equation}\label{L2:psi:phi}
\int_{\Omega_t^*} |\partial_t P_t^{\ast}(y)|^2 d\rho_t(y) = \int_{\Omega} |\partial_t P_t(x)|^2 \dx.
\end{equation}
From the condition \eqref{mean:Phi} and the Poincar\'e-Sobolev embedding theorem $W^{1,1}(\Omega) \hookrightarrow L^{2}(\Omega)$ for the convex set $\Omega\subset \R^2$,  we can find a positive 
constant $C(\Omega) > 0$ depending only on $\Omega$ such that
\begin{equation}\label{CP:omega}
\|\p_t P_t\|_{L^2(\Omega)} \leq C(\Omega) \int_\Omega |\nabla \partial_t P_t(x)| \dx.
\end{equation}
By \cite[Theorem 2.1]{Loe}, we have
$$\int_\Omega |\nabla \partial_t P_t(x)| \dx \leq C(R^\ast,\Omega)\|\rho_t\|^{\frac{1}{2}}_{L^{\infty}(I\times B_{R^{\ast}})}
\|\partial_t X_t\|_{L^{\infty}(I\times \Omega)}.$$
Therefore, by combining \eqref{L2:psi:phi} and \eqref{CP:omega}, we obtain
\begin{eqnarray}
\label{L2Pt}
\|\partial_t P_t^{\ast}\|_{L^{2}(S_{P_t^{\ast}}(y_0, 2h_0), d\rho_t)} \leq \|\p_t P_t\|_{L^2(\Omega)}
\leq C(R^\ast,\Omega)\|\rho_t\|^{\frac{1}{2}}_{L^{\infty}(I\times B_{R^{\ast}})}
\|\partial_t X_t\|_{L^{\infty}(I\times \Omega)}.
\end{eqnarray} 
Putting (\ref{bound:v}), (\ref{L2Pt}) and (\ref{HPt}) all together, we have
\begin{equation*}
 |\p_t  P_t^{\ast}(x)-\p_t  P_t^{\ast}(x_0)|\leq  C(\lambda,\Lambda, R^{\ast}, h_0,\Omega) \|\partial_t X_t\|_{L^{\infty}(I\times \Omega)}|x-x_0|^{\gamma}.
\end{equation*}
{\it Step 2:} Suppose now $\omega^\ast\subset\subset\Omega^{\ast}_t$. Let $\hat w$ be
the set of points in $\Omega_t^{\ast}$ of distance at least $\frac{1}{2}(\omega^\ast, \p\Omega_t^\ast)$ from $\p\Omega_t^\ast$.
Then, there is $h_0>0$ depending only on $\lambda,\Lambda, \diam(\Omega)$, 
$ \|\nabla P_t^\ast\|_{C^{\delta}(\hat \omega)}$ and $\dist(\omega^\ast,\p\Omega_t^\ast)$
such that
$$S_{P_t^\ast}(x_0, 4h_0)\subset\subset\hat \omega.$$
Indeed, the strict convexity of $P_t^\ast$ implies the existence of $h_0>0$ such that $S_{P_t^\ast}(x_0, 4h_0)\subset\subset\hat\omega.$ For any of these sections, we first 
use the $C^{1,\delta}$ property of $P_t^\ast$ to deduce that $$S_{P_t^\ast}(x_0, 4h_0)\supset B_{c_1 h_0^{\frac{1}{1+\delta}}}(x_0), \text{where }c_1=  
\|\nabla P_t^\ast\|^{-\frac{1}{1+\delta}}_{C^{\delta}(\hat\omega)}.$$
The volume estimates (\ref{vol-sec1}) give
$|S_{P_t^\ast}(x_0, 4h_0)|\leq C(\lambda,\Lambda)h_0.$
Using the convexity of $S_{P_t^\ast}(x_0, 4h_0)$, we easily find that
$$S_{P_t^\ast}(x_0, 4h_0) \subset B_{C(\lambda, \Lambda, c_1) h_0^{\frac{\delta}{1+\delta}}}(x_0).$$
Thus,  $h_0>0$ can be chosen to depend only on $\lambda,\Lambda, \diam(\Omega)$, $ \|\nabla P_t^\ast\|_{C^{\delta}(\hat \omega)}$ 
and $\dist(\omega^\ast,\p\Omega_t^\ast)$ so that $S_{P_t^\ast}(x_0, 4h_0)\subset\subset\hat \omega\subset\subset\Omega_t^\ast$.

Now, we can conclude from {\it Step 1} and {\it Step 2} the H\"older continuity of $\p_t P_t^{\ast}$ as asserted in (i).\\
(ii) Because
 $P_t$ is the Brenier solution to the Monge-Amp\`ere equation \eqref{MA:Phi} with the second boundary condition $\nabla P_t (\Omega)\subset \Omega_t^{\ast}$, and 
 $\Omega_t^{\ast}$ is convex, it is also
 the Aleksandrov solution as proved by Caffarelli \cite{C2}.
The hypothesis \eqref{rho:lambdas} yields that, in the sense of Aleksandrov, $$\Lambda^{-1} \leq \det D^2 P_t \leq \lambda^{-1}~ \text{in }\Omega.$$ Hence, Caffarelli's global
regularity result \cite{C3}
yields $P_t\in C^{1,\delta}(\overline{\Omega})$ and $P^\ast_t\in C^{1,\delta}(\overline{\Omega_t^\ast})$. 
These combined with \eqref{ptPP} and (i) give the conclusion of (ii) with $\beta=\gamma\delta$. 
The proof of Theorem \ref{H_thm_1} is complete.
\end{proof}
\subsection{Polar factorization of time-dependent maps on the torus} 
The polar factorization of maps on general Riemannian manifolds has been treated by McCann \cite{Mc}, and also in the  particular case of the flat torus $\T^n= \R^n/\Z^n$ by 
Cordero-Erausquin \cite{CE}. Given a mapping $X$ from $\T^n$ into itself, then under the non-degeneracy condition (\ref{2N}), there is a unique pair $(P, g)$ such that
\begin{myindentpar}{1cm}
 1. $X= \nabla P \circ g,$\\
 2. $g: \T^n \to \T^n$ is a Lebesgue-measure preserving map,\\
 3. $P: \R^n \to \R$ is convex and  $P - \frac{|x|^2}{2}$ is $\Z^n$-periodic.
 \end{myindentpar}
The analogue of Theorem \ref{H_thm_1} for the regularity of polar factorization of time-dependent maps on the two-dimensional torus is the following theorem.
\begin{thm} 
[H\"older regularity of polar 
factorization of time-dependent maps on the 2D torus]
\label{peri_thm} Let $I\subset\R$ be an open interval. Suppose that $X_t,\p_t X_t \in L^\infty(I \times \T^2)$
where $X_t:\T^2\rightarrow \T^2$ satisfies (\ref{2N}) for all $t\in I$.
Let $d\rho_t=X_t\#  \mathcal{L}_{\T^2}$. Suppose that
$ \lambda \leq \rho_t\leq \Lambda$ on $\T^2$ for some positive constants $\lambda, \Lambda$ and  all $t \in I $. 
Let $P_t$ and $P_t^{\ast}$ be as in (\ref{2defPhi}) and (\ref{2defPsi}) where $\Omega$ is now replaced by $\T^2$. 
Then, there 
exist $\alpha=\alpha(\lambda, \Lambda) \in (0,1)$ and $C=C(\lambda,\Lambda)>0$ such that  
$$\|\p_t P_t^{\ast}\|_{L^{\infty}(I, C^{\alpha}(\T^2))} + \|\p_t P_t\|_{L^{\infty}(I, C^{\alpha}(\T^2))} \leq C.$$
\end{thm}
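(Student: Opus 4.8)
The plan is to run the argument of Theorem \ref{GSthm} in the torus setting, feeding into it the polar--factorization analysis that underlies Theorem \ref{H_thm_1}; the point is that on $\T^2$ every Monge--Amp\`ere section is forced to contain $\T^2$, so the \emph{interior} estimate of Theorem \ref{Holder_int_thm} automatically upgrades to a global one. First, by the smooth approximation scheme of \cite{Loe} used in the proof of Theorem \ref{H_thm_1}, it suffices to establish the two $C^{\alpha}(\T^2)$ bounds assuming $P_t$ and $P_t^{\ast}$ are smooth, provided the constants depend only on $\lambda$, $\Lambda$ (and on the fixed quantity $\|\p_t X_t\|_{L^{\infty}(I\times\T^2)}$). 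Since $X_t\#\mathcal{L}_{\T^2}=\rho_t$ with $\lambda\le\rho_t\le\Lambda$ and $P_t^{\ast}-|x|^2/2$ is $\Z^2$--periodic, $P_t^{\ast}$ is an Aleksandrov solution of $\det D^2 P_t^{\ast}=\rho_t$ on $\T^2$; after adding suitable time--independent constants (so that $\int_{\T^2}\p_t P_t\,dx=0$), Caffarelli's global $C^{1,\beta}$ estimate \cite{C3} gives $\|P_t^{\ast}\|_{C^{1,\beta}(\T^2)}+\|P_t\|_{C^{1,\beta}(\T^2)}\le C(\lambda,\Lambda)$ with $\beta=\beta(\lambda,\Lambda)\in(0,1)$. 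Exactly as in (\ref{Sconfine}), combining this with periodicity produces universal $h_0=h_0(\lambda,\Lambda)$ and $R_0=R_0(\lambda,\Lambda)$ such that $\T^2\subset S_{P_t^{\ast}}(x_0,h_0)\subset S_{P_t^{\ast}}(x_0,4h_0)\subset B_{R_0}(0)$ for every $x_0\in\T^2$.

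Next I would set up the linearized equation. Differentiating $\det D^2 P_t^{\ast}=\rho_t$ in $t$ gives $\nabla\cdot(M_{P_t^{\ast}}\nabla\p_t P_t^{\ast})=\p_t\rho_t$, where $M_{P_t^{\ast}}=(\det D^2 P_t^{\ast})(D^2 P_t^{\ast})^{-1}$. Following Loeper's observation invoked in the proof of Theorem \ref{H_thm_1}, on $\T^2$ one still has a continuity equation $\p_t\rho_t+\div(\rho_t v_t)=0$ for a smooth vector field $v_t$ with $\|v_t\|_{L^{\infty}(\T^2,d\rho_t)}\le\|\p_t X_t\|_{L^{\infty}(I\times\T^2)}$, so that $\p_t P_t^{\ast}$ solves $\nabla\cdot(M_{P_t^{\ast}}\nabla\p_t P_t^{\ast})=\div F_t$ with $F_t=-\rho_t v_t$ and $\|F_t\|_{L^{\infty}(\T^2)}\le\Lambda\,\|\p_t X_t\|_{L^{\infty}(I\times\T^2)}$; by periodicity the same bound holds on $B_{R_0}(0)$. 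For the $L^2$ control of $\p_t P_t^{\ast}$ I would differentiate the Legendre identity $P_t(x)+P_t^{\ast}(\nabla P_t(x))=x\cdot\nabla P_t(x)$ to get $\p_t P_t(x)=-\p_t P_t^{\ast}(\nabla P_t(x))$, change variables $y=\nabla P_t(x)$ using $\rho_t(\nabla P_t)\det D^2 P_t=1$ to obtain $\int_{\T^2}|\p_t P_t^{\ast}|^2\,d\rho_t=\int_{\T^2}|\p_t P_t|^2\,dx$, and then bound the right side via the Poincar\'e--Sobolev embedding $W^{1,1}(\T^2)\hookrightarrow L^2(\T^2)$ (using $\int_{\T^2}\p_t P_t\,dx=0$) together with the torus analogue of Loeper's estimate \cite[Theorem 2.1]{Loe}, namely $\int_{\T^2}|\nabla\p_t P_t|\,dx\le C(\lambda,\Lambda)\|\p_t X_t\|_{L^{\infty}(I\times\T^2)}$. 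Since $\rho_t\ge\lambda$, this yields $\|\p_t P_t^{\ast}\|_{L^2(\T^2)}\le C(\lambda,\Lambda)\|\p_t X_t\|_{L^{\infty}(I\times\T^2)}$, hence $\|\p_t P_t^{\ast}\|_{L^2(B_{R_0}(0))}\le C$ by periodicity.

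With these ingredients assembled, I would apply Theorem \ref{Holder_int_thm} to $\nabla\cdot(M_{P_t^{\ast}}\nabla\p_t P_t^{\ast})=\div F_t$ on each section $S_{P_t^{\ast}}(x_0,4h_0)$ with $p=2$ and ambient domain $\Omega=B_{R_0}(0)$, verbatim as in the proof of Theorem \ref{GSthm}; this gives $\gamma=\gamma(\lambda,\Lambda)\in(0,1)$ with $\|\p_t P_t^{\ast}\|_{L^{\infty}(I,C^{\gamma}(\T^2))}\le C(\lambda,\Lambda)$. Finally, from $\p_t P_t(x)=-\p_t P_t^{\ast}(\nabla P_t(x))$ and $\|P_t\|_{C^{1,\beta}(\T^2)}\le C$ one deduces $\|\p_t P_t\|_{L^{\infty}(I,C^{\gamma\beta}(\T^2))}\le C(\lambda,\Lambda)$, so the theorem holds with $\alpha=\gamma\beta$. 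The main obstacle is not the linearized--equation machinery---once the setup is in place the argument is essentially that of Theorem \ref{GSthm}---but rather transcribing Loeper's smooth approximation and his continuity--equation identity (with the associated $\mathcal{M}(\cdot)$--bound for $\p_t\nabla P_t$) from a bounded smooth strictly convex domain $\Omega$ to the flat torus $\T^2$, and keeping careful track of $\Z^2$--periodicity throughout so that $h_0$, $R_0$ and all constants remain universal.
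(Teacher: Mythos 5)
Your proposal follows essentially the same strategy as the paper's proof: note the continuity equation $\p_t\rho_t+\div(\rho_t v_t)=0$ with $\|v_t\|_{L^\infty}\le\|\p_t X_t\|_{L^\infty}$ from Loeper's polar-factorization analysis, observe that $v_t$ then plays the role of $U_t$ from Theorem~\ref{GSthm}, and rerun the GSthm argument (periodicity forces $\T^2\subset S_{P_t^\ast}(x_0,h_0)\subset S_{P_t^\ast}(x_0,4h_0)\subset B_{R_0}$, feed the $L^\infty$ bound on $F_t$ and the $L^2$ bound on $\p_t P_t^\ast$ into Theorem~\ref{Holder_int_thm}, then push to $\p_t P_t$ via the Legendre identity). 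The one place you deviate is the $L^2$ control of $\p_t P_t^\ast$: you transplant the route of Theorem~\ref{H_thm_1} (change variables $y=\nabla P_t(x)$ to pass to $\p_t P_t$, then Poincar\'e--Sobolev plus a ``torus analogue'' of Loeper's $\mathcal{M}(\cdot)$-bound $\int|\nabla\p_t P_t|\le C\|\p_t X_t\|_{L^\infty}$), whereas the paper cites the periodic $W^{1,1+\kappa}$ estimate $\int_{\T^2}\rho_t|\p_t\nabla P_t^\ast|^{1+\kappa}\le C(\lambda,\Lambda)$ from Figalli's lecture notes and then applies Sobolev embedding directly to $\p_t P_t^\ast$ exactly as in (\ref{L2bound}). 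Both routes work, but yours invokes a torus version of \cite[Theorem~2.1]{Loe} that is not stated in the literature the paper uses, so you would owe the reader a remark that Loeper's argument transfers to the periodic setting; the paper's choice avoids this by resting on an explicitly available estimate. Also a small slip: you say the normalization is by ``suitable time-independent constants'' so that $\int_{\T^2}\p_t P_t\,dx=0$, but adding time-independent constants does not affect $\p_t P_t$ at all; you need a time-dependent normalization (subtract $\int_{\T^2}P_t$ from $P_t$, or $\int_{\T^2}P_t^\ast$ from $P_t^\ast$ as the paper does).
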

\begin{proof}[Proof of Theorem \ref{peri_thm}] 
As in the proof of Theorem \ref{H_thm_1}, the density $\rho_t$ satisfies the continuity equation
$
\partial_t\rho_t + \div(\rho_t v_t) = 0
$
where $v_t$ is a bounded vector field with
$\norm{v_t }_{L^\infty(\T^2)} \leq \norm{\p_t X_t}_{L^\infty(\T^2)}.
$
This vector field $v_t$ is similar to the vector field $U_t$ in Theorem \ref{GSthm} where the only information we used is its uniform boundedness in $t$.
Moreover, for all $t>0$, we still have (see \cite[Theorem 4.5]{F})
$$\int_{\T^2} \rho_t(x) |\p_t \nabla P_t^{\ast}(x)|^{1+\kappa}dx\leq C(\lambda,\Lambda)~\text{where } \kappa= \kappa (\lambda,\Lambda)>0.$$
Thus, Theorem \ref{peri_thm} follows from the same arguments as in the proof of Theorem \ref{GSthm}.
\end{proof}

\section{Green's function and The Monge-Amp\`ere Sobolev inequality}
\label{auxi_sec}
In this section, we prove  Propositions \ref{G2_thm}, \ref{sob_ineq} and \ref{G1_thm}. 
Recall that in these propositions and this section, $\Omega \subset \R^2$ is a bounded convex set with nonempty interior and $\varphi \in C^2(\Omega)$ is a convex function such that
$$
\lambda\leq \det D^2\varphi \leq \Lambda
$$
for some positive constants $\lambda$ and $\Lambda$. 

Given a section $S=S_\varphi(x_0, h)\subset\subset\Omega$ of $\varphi$, we let 
$g_S(x, y)$ be Green's function of the operator $L_\varphi:=-\p_j (\Phi^{ij} \p_i)=-\Phi^{ij}\p_{ij}$ on $S$, as in (\ref{Green:S}).

To prove  Proposition \ref{G1_thm}, 
we recall the following fact regarding Green's function in 2D.
\begin{lem}
\label{smallV}
Suppose that $S= S_\varphi(x_0, h)\subset\subset\Omega$ and $S_{\varphi}(y, \eta h)\subset S_\varphi(x_0, h)$ for some $\eta\in (0, 1)$. Then
\begin{myindentpar}{1cm}
(i)  (\cite[Section 6]{L2}) For all  $x\in \p  S_{\varphi}(y,\frac{\eta}{2}h)$, we have
$g_S(x, y)\leq C(\lambda,\Lambda,\eta).$\\
(ii) (\cite[Lemma 3.2]{L}) For all $0<h_1<\frac{\eta}{4}h$, we have
$$\max_{x\in\p S_\varphi(y, h_1)} g_S(x, y)\leq C (\lambda,\Lambda) + \max_{z\in \p S_\varphi(y, 2h_1)} g_S(z, y).
$$
\end{myindentpar}
\end{lem}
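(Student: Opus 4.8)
The plan is to prove (i) first, using the Caffarelli--Guti\'errez Harnack inequality (Theorem~\ref{CGthm}) together with the a priori mass bound
$$\int_{S}g_S(x,y)\,dx\le C(\lambda,\Lambda)\,h\qquad\text{for all }y\in S,$$
and then to deduce (ii) from (i). To obtain the mass bound I would let $v$ solve $L_\varphi v=1$ in $S$ with $v=0$ on $\partial S$; then $v>0$ by the maximum principle, and after normalizing $S$ by an affine map (under which the equation for $v$ rescales as in Lemma~\ref{res_eq_lem} and, in two dimensions, $\det D^2\varphi$ is left unchanged) the Aleksandrov/ABP estimate for $L_\varphi$ on a normalized section gives $\|v\|_{L^{\infty}(S)}\le C(\lambda,\Lambda)\,h$. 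Pairing $L_\varphi g_S(\cdot,y)=\delta_y$ with $v$ and using the symmetry of $g_S$ then gives $\int_S g_S(x,y)\,dx=v(y)\le C(\lambda,\Lambda)\,h$; the only subtlety here is a truncation argument in the integration by parts, needed because in two dimensions $g_S(\cdot,y)$ lies in $W^{1,q}$ only for $q<2$.

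Granting the mass bound, fix $x\in\partial S_\varphi(y,\tfrac{\eta}{2}h)$. Using the engulfing and comparison properties of sections (\cite[Theorems 3.3.8 and 3.3.10]{G01}) I would pick $\rho=\rho(\lambda,\Lambda,\eta)>0$ with $S_\varphi(x,2\rho h)\subset S_\varphi(y,\eta h)\subset S$ and $y\notin S_\varphi(x,2\rho h)$; since $\overline{S}\subset\Omega$, also $S_\varphi(x,2\rho h)\subset\subset\Omega$, and $g_S(\cdot,y)$ is a nonnegative solution of $L_\varphi w=0$ on $S_\varphi(x,2\rho h)$. Theorem~\ref{CGthm} and the volume estimate \eqref{vol-sec1} then yield
$$g_S(x,y)\le\sup_{S_\varphi(x,\rho h)}g_S(\cdot,y)\le C(\lambda,\Lambda)\inf_{S_\varphi(x,\rho h)}g_S(\cdot,y)\le\frac{C(\lambda,\Lambda)}{|S_\varphi(x,\rho h)|}\int_S g_S(x,y)\,dx\le\frac{C(\lambda,\Lambda)\,h}{c(\lambda,\Lambda)\,\rho h}=C(\lambda,\Lambda,\eta),$$
where the cancellation of $h$ uses $|S_\varphi(x,\rho h)|\simeq\rho h$, which holds precisely because $n=2$. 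This proves (i).

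For (ii), fix $0<h_1<\tfrac{\eta}{4}h$, so that $S':=S_\varphi(y,4h_1)\subset\subset S\subset\subset\Omega$. On $S'$ I would write $g_S(\cdot,y)=g_{S'}(\cdot,y)+H$ with $H$ the $L_\varphi$-harmonic function on $S'$ having boundary values $g_S(\cdot,y)$ on $\partial S'$; this splitting is valid because $g_S(\cdot,y)-g_{S'}(\cdot,y)$ is $L_\varphi$-harmonic on $S'$ (both have pole $y$ there) and $g_{S'}(\cdot,y)=0$ on $\partial S'$, and $H\ge0$ by monotonicity of Green's functions in the domain. For $x\in\partial S_\varphi(y,h_1)$, part (i) applied to the section $S'=S_\varphi(y,4h_1)$ with $\eta$ replaced by $\tfrac12$ gives $g_{S'}(x,y)\le C(\lambda,\Lambda)$, and the maximum principle gives $H(x)\le\max_{\partial S'}H=\max_{\partial S_\varphi(y,4h_1)}g_S(\cdot,y)$. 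Finally, $g_S(\cdot,y)$ solves $L_\varphi w=0$ on $S\setminus\overline{S_\varphi(y,2h_1)}$, is nonnegative there, and vanishes on $\partial S$, so the maximum principle on this region gives $\max_{\partial S_\varphi(y,4h_1)}g_S(\cdot,y)\le\max_{\partial S_\varphi(y,2h_1)}g_S(\cdot,y)$ (note $\partial S_\varphi(y,4h_1)\subset S\setminus\overline{S_\varphi(y,2h_1)}$). Adding the two bounds gives $g_S(x,y)\le C(\lambda,\Lambda)+\max_{\partial S_\varphi(y,2h_1)}g_S(\cdot,y)$ for all $x\in\partial S_\varphi(y,h_1)$, which is (ii).

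I expect the main obstacle to be the a priori mass bound and the bookkeeping that makes it $h$-independent after Harnack: one must track how the equation $L_\varphi v=1$ and the $L^{\infty}$-norm of $v$ scale under normalization, carry out the integration by parts defining $v(y)$ despite the borderline integrability of $g_S(\cdot,y)$, and observe that it is exactly the two-dimensionality (area of a section $\simeq$ its height) that forces the factor $h$ to cancel in the Harnack step, leaving a constant depending only on $\lambda,\Lambda,\eta$.
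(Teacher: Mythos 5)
Your proposal is correct and takes essentially the same route the paper attributes to \cite{L} and \cite{L2}. Part (i) matches the paper's own sketch (the mass bound $\int_S g_S(x,y)\,dx\le C(\lambda,\Lambda)h$ followed by a Harnack-type local estimate centered at a point of $\partial S_\varphi(y,\tfrac{\eta}{2}h)$, where the two-dimensionality $|S_\varphi(x,\rho h)|\simeq\rho h$ makes the $h$ cancel), and your decomposition $g_S=g_{S'}+H$ with $S'=S_\varphi(y,4h_1)$, $H$ the $L_\varphi$-harmonic extension of the boundary values, is the standard proof of (ii), which the paper leaves to \cite[Lemma 3.2]{L}.
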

For reader's convenience, we explain how to derive Lemma \ref{smallV}(i) from \cite{L2}. By Lemma 6.1 in \cite{L2} and the volume estimates (\ref{vol-sec1}), we obtain
$$\int_{S} g_S (x, y) dx\leq C(\lambda)|S|\leq C(\lambda,\Lambda)h.$$
Applying Lemma 6.2 in \cite{L2} to $g_S(\cdot, y)$ in $S_{\varphi}(y,\eta h)$, we obtain for all $x\in \p  S_{\varphi}(y,\frac{\eta}{2}h)$ the estimate
$$g_S(x, y)\leq C(\lambda,\lambda) h (\eta h)^{-1} \leq C(\lambda,\Lambda,\eta).$$

Lemma \ref{smallV} implies the following lemma:
\begin{lem} 
\label{g_decay_lem}
Assume that $S= S_\varphi(x_0, h)\subset\subset\Omega$ and $S_{\varphi}(y, \eta h)\subset S_\varphi(x_0, h)$ for some $\eta\in (0, 1)$. Let $\tau_0=C(\lambda,\Lambda, \eta) + C(\lambda,\Lambda)$ where $C(\lambda,\Lambda, \eta)$  and $ C(\lambda,\Lambda) $ are as in Lemma \ref{smallV}.
Then for all $\tau> \tau_0$, we have
 $$\{x\in S: g_{S}(x, y)>\tau\}\subset S_{\varphi}(y, \eta h 2^{-\tau/\tau_0}).$$
\end{lem}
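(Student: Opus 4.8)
The plan is to read off the decay of the super-level sets of $g_S(\cdot,y)$ from the two a priori bounds of Lemma \ref{smallV}, iterated along a continuum of dyadic scales, with the maximum principle for $L_\varphi=-\Phi^{ij}\p_{ij}$ turning boundary bounds on small sections around $y$ into interior bounds on their complements. For a height $\rho$ with $S_\varphi(y,\rho)\subset S$, set $m(\rho):=\max_{x\in\p S_\varphi(y,\rho)}g_S(x,y)$. Since $g_S(\cdot,y)$ solves $L_\varphi g_S(\cdot,y)=0$ in $S\setminus\{y\}$, is positive there, and vanishes on $\p S$, applying the comparison principle on $S\setminus\overline{S_\varphi(y,\rho)}$ — whose boundary lies in $\p S\cup\p S_\varphi(y,\rho)$, where $g_S=0$ and $g_S\le m(\rho)$ respectively — gives $g_S(x,y)\le m(\rho)$ for all $x\in S\setminus S_\varphi(y,\rho)$. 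Hence $\{x\in S:g_S(x,y)>\tau\}\subset S_\varphi(y,\rho)$ whenever $m(\rho)\le\tau$, and the same comparison shows that $\rho\mapsto m(\rho)$ is nonincreasing. This step is routine: on compact subsets of $S\setminus\{y\}$ the matrix $\Phi=(\det D^2\varphi)(D^2\varphi)^{-1}$ is bounded and uniformly positive definite (as $\varphi\in C^2$ with $\det D^2\varphi\ge\lambda>0$), so the classical maximum principle applies and $g_S(\cdot,y)$ is continuous up to $\p S$.

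The crux is to show that $m$ grows at most linearly in the number of halvings of the scale. I put $\mu(s):=m(\eta h\,2^{-s})$ for real $s\ge 1$, which is well defined since $\eta h\,2^{-s}\le\tfrac{\eta}{2}h$ forces $S_\varphi(y,\eta h\,2^{-s})\subset S_\varphi(y,\eta h)\subset S$. For $s\in[1,2]$ one has $\eta h\,2^{-s}=\tfrac{\eta'}{2}h$ with $\eta':=2^{1-s}\eta\in[\tfrac{\eta}{2},\eta]$, so Lemma \ref{smallV}(i) applied with $\eta$ replaced by $\eta'$ bounds $\mu(s)\le C_1$ for a constant $C_1=C_1(\lambda,\Lambda,\eta)$. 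For $s>2$ one has $\eta h\,2^{-s}<\tfrac{\eta}{4}h$, so Lemma \ref{smallV}(ii) with $h_1=\eta h\,2^{-s}$ gives $\mu(s)\le C(\lambda,\Lambda)+\mu(s-1)$. Setting $\tau_0:=C_1(\lambda,\Lambda,\eta)+C(\lambda,\Lambda)$ — which has the form asserted in the lemma and satisfies $C(\lambda,\Lambda),\,C_1\le\tau_0$ — and iterating the last inequality down to the base interval $[1,2)$, I obtain $\mu(s)\le s\,\tau_0$ for every $s\ge 1$: writing $s=1+\lfloor s-1\rfloor+\theta$ with $\theta\in[0,1)$, the iterated estimate yields $\mu(s)\le\lfloor s-1\rfloor\,C(\lambda,\Lambda)+C_1\le(\lfloor s-1\rfloor+1)\,\tau_0\le s\,\tau_0$.

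Given $\tau>\tau_0$, I then take $s=\tau/\tau_0>1$, so that $m(\eta h\,2^{-\tau/\tau_0})=\mu(s)\le s\,\tau_0=\tau$, and the first step gives precisely $\{x\in S:g_S(x,y)>\tau\}\subset S_\varphi(y,\eta h\,2^{-\tau/\tau_0})$, which is the claim. The only genuinely two-dimensional input is Lemma \ref{smallV} itself, and this is where I expect the real content to sit: it is the \emph{linear} accumulation of the universal constant $C(\lambda,\Lambda)$ over $\sim s$ dyadic scales — mirroring the logarithmic blow-up of the planar Green's function at its pole — that produces the exponential rate $2^{-\tau/\tau_0}$, whereas in dimension $n\ge 3$ no comparable control of $g_S$ near the pole is available. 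The only other thing to watch is the harmless endpoint bookkeeping between the scales $\tfrac{\eta}{4}h$ and $\tfrac{\eta}{2}h$, which I handle above by invoking Lemma \ref{smallV}(i) for the small range of parameters $\eta'\in[\tfrac{\eta}{2},\eta]$ rather than just for $\eta$.
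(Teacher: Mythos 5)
Your proof is correct and follows essentially the same strategy as the paper's: iterate Lemma~\ref{smallV}(ii) over dyadic scales anchored by Lemma~\ref{smallV}(i), then convert the resulting linear-in-scale bound on $\max_{\partial S_\varphi(y,\rho)}g_S(\cdot,y)$ into the super-level-set inclusion via the maximum principle. The only cosmetic difference is that you bound $m(\rho)$ directly at every scale $\rho$ and then substitute $\rho=\eta h\,2^{-\tau/\tau_0}$, whereas the paper fixes $\tau$, extracts the critical height $h_1$ containing the super-level set, and solves the resulting inequality for $h_1$.
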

\begin{proof}[Proof of Lemma \ref{g_decay_lem}]
If $\tau>\tau_0$ then by the maximum principle, we can find $h_1<\eta h/2$ such that
\begin{equation}
\label{ytau}
\{x\in S: g_{S}(x, y)>\tau\}\subset S_{\varphi}(y, h_1).
\end{equation}
Let $m$ be a positive integer such that $\eta h/2\leq 2^m h_1<\eta h$.
Iterating Lemma \ref{smallV}(ii), we find that
$$\max_{x\in\p S_\varphi(y, h_1)} g_S(x, y)\leq m C(\lambda,\Lambda) + \max_{x\in\p S_\varphi(y, 2^m h_1)} g_S(x, y).$$
The maximum principle and Lemma \ref{smallV}(i) give
$$\max_{x\in\p S_\varphi(y, 2^m h_1)} g_S(x, y) \leq \max_{x\in\p S_\varphi(y, \eta h/2)} g_S(x, y)\leq C(\lambda,\Lambda,\eta).$$
Hence, 
$$\max_{x\in\p S_\varphi(y, h_1)} g_S(x, y)\leq m (C(\lambda,\Lambda,\eta)+ C(\lambda,\Lambda))\leq m\tau_0\leq \tau_0 \log_2 (\eta h/h_1).$$
Thus, we obtain from (\ref{ytau}) the estimate
$\tau\leq  \tau_0 \log_2 (\eta h/h_1).$
It follows that 
$h_1\leq \eta h 2^{-\tau/\tau_0}.$
Lemma \ref{g_decay_lem} now follows from (\ref{ytau}).
\end{proof}
Now we are ready to give the proof of Proposition \ref{G1_thm}.
\begin{proof} [Proof of Proposition \ref{G1_thm}] Recall that $S=S_{\varphi}(x_0, h)$ with $S_{\varphi}(x_0, 2h)\subset\subset\Omega$.\\
{\it Step 1: Special case.} We first prove 
\begin{equation}
 \int_S g_S^p(x, y) dx \leq C(\lambda,\Lambda, p) h.
 \label{center_reduce}
 \end{equation}
in the special case 
that 
$S_{\varphi}(y, \eta h)\subset S_{\varphi}(x_0, h)
$
for some universal constant $\eta\in (0,1)$ depending only on $\lambda$ and $\Lambda$. In this case, by Lemma \ref{g_decay_lem}, we find that for all $\tau>\tau_0(\lambda,\Lambda)$,
$$\{x\in S: ~ g_S(x,y)> \tau \}\subset S_{\varphi}(y, \eta h 2^{-\tau/\tau_0}).$$
Using the upper bound on the volume of sections in (\ref{vol-sec1}), we find that 
$$|\{x\in S: ~ g_S(x,y)> \tau \}| \leq C(\lambda,\Lambda) h 2^{-\tau/\tau_0}.$$
It follows from the layer cake representation and the volume estimate $|S|\leq C(\lambda,\Lambda)h$ that
\begin{eqnarray*}
\int_S g_S^p(x, y) dx&=&\int_0^{\infty} p\tau^{p-1} |\{x\in S: ~ g_S(x,y)> \tau \}| d\tau\\ &\leq& \int_0^{\tau_0} p\tau^{p-1} |\{x\in S: ~ g_S(x,y)> \tau \}| d\tau +
\int_{\tau_0}^{\infty} p\tau^{p-1} |\{x\in S: ~ g_S(x,y)> \tau \}| d\tau
\\ &\leq &\tau_0^p |S| + \int_{\tau_0}^{\infty} p\tau^{p-1} C(\lambda,\Lambda)h 2^{-\tau/\tau_0} d\tau\leq C(p,\lambda,\Lambda)h.
\end{eqnarray*}
Hence (\ref{center_reduce}) is proved. 

{\it Step 2: General case.} It remains to prove the proposition for the general case $y\in S_{\varphi}(x_0, h)$. By \cite[Theorem 3.3.10(i)]{G01}, there is a universal constant $\eta\in (0,1)$ depending 
only on $\lambda$ and $\Lambda$ such that
for all $y\in S_{\varphi}(x_0, h)$, we have 
\begin{equation}S_{\varphi}(y, \eta h)\subset S_{\varphi}(x_0, \frac{3}{2} h). 
 \label{yaway2}
\end{equation}
Thus, for any $x\in S_{\varphi}(x_0, h)$, we have
$g_{S_{\varphi}(x_0,  \frac{3}{2} h)}(x, y)\geq g_{S_{\varphi}(x_0, h)}(x, y)$ by the maximum principle.
It follows that 
\begin{equation*}
\int_{S_{\varphi}(x_0, h)} g_{S_{\varphi}(x_0, h)}^p(x, y) dx \leq \int_{S_{\varphi}(x_0,  \frac{3}{2}h)} g_{S_{\varphi}(x_0, \frac{3}{2} h)}^p(x, y) dx
\leq C(p, \lambda,\Lambda)h.
\end{equation*}
In the last inequality, we applied the estimate (\ref{center_reduce}) to the section $S_{\varphi}(x_0, \frac{3}{2} h)$ and the point $y$ in $S_{\varphi}(x_0, \frac{3}{2} h)$ 
that satisfies (\ref{yaway2}). The proof of Proposition \ref{G1_thm} is now complete.
\end{proof}
Note that, Proposition \ref{G2_thm} is closely related to \cite[Theorem 1.1(iii)]{L}. 
For reader's convenience
we provide the detailed proof of Proposition \ref{G2_thm}
 here.
 \begin{proof}[Proof of Proposition \ref{G2_thm}] Recall that $S=S_{\varphi}(x_0, h)$.
   Fix $y\in S$ and set
  $$v(x) := g_S(x, y) + 1\quad \text{for all }x \in \overline{S}.
  $$
  Then $v\geq 1$ in $S$, $v=1$ on $\p S$ and $\p_i(\Phi^{ij}v_j)=-\delta_y$ in $S$.
  Let $\hat S= S_{\varphi}(x_0, 2h)$ and extend $v$ to be $1$ in $\hat S\backslash S$.
  Then $v\geq 1$ in $\hat S$. Using the divergence-free property of $(\Phi^{ij})$, we have $\p_i(\Phi^{ij}v_j)=-\delta_y$ in $\hat S$. \\
 {\it Step 1: Integral bound for $\log v$.} We show that
   \begin{equation}\int_S \Phi^{ij} (\log v)_i (\log v)_j dx\equiv \int_S \Phi^{ij} v_i  v_j\frac{1}{ v^2} dx \leq C(\lambda,\Lambda).
   \label{log_ineq}
  \end{equation}
 Indeed,  given $w \in C^1_0 (\hat S)$, 
 multiply the inequality
  $\p_i (\Phi^{ij} v_j)\leq 0$ by $\frac{w^2}{v}\geq 0$ and then integrate by parts to get
  $$0\geq - \int_{\hat S} \Phi^{ij} v_j \p_i \left(\frac{w^2}{ v}\right)dx =- \int_{\hat S} 2\Phi^{ij}  w_i v_j \frac{w}{ v} dx +  \int_{\hat S} \Phi^{ij} v_i  v_j\frac{w^2}{ v^2} dx.$$
  By the Cauchy-Schwarz inequality, we obtain
  \begin{equation*}
   \int_{\hat S} \Phi^{ij} \frac{v_i v_j w^2}{ v^2} dx \leq \int_{\hat S} 2\Phi^{ij} \frac{w_i v_j w}{ v} dx \leq 2 \left(\int_{\hat S} \Phi^{ij}\frac{v_i v_j w^2}{ v^2}dx\right)^{1/2}
    \left(\int_{\hat S} \Phi^{ij} w_i  w_j dx\right)^{1/2}
  \end{equation*}
and therefore
\begin{equation}
  \int_{\hat S} \Phi^{ij} v_i  v_j\frac{w^2}{ v^2} dx\leq 4 \int_{\hat S} \Phi^{ij} w_i  w_j dx.
  \label{phivh}
\end{equation}
By choosing a suitable $0\leq w\leq 1$ as in the proof of Theorem 6.2 in Maldonado \cite{Mal4}, and using the volume estimates in (\ref{vol-sec1}), we 
obtain (\ref{log_ineq}). For completeness, we include a construction of $w$.

By subtracting $\varphi(x_0)+\nabla \varphi(x_0)\cdot (x-x_0)$ from $\varphi$, we can assume that $\varphi(x_0)=0$ and $\nabla \varphi(x_0)=0$. Therefore 
$\varphi\geq 0$ on $\hat S=S_{\varphi}(x_0, 2h)$. Let $\gamma:\R\rightarrow [0, 1]$ be a smooth function supported in $[0, 2]$ with $\gamma\equiv 1$ on $[0, 1]$ and $\|\gamma'\|_{L^{\infty}(\R)}\leq 10$. Let $w(x):=\gamma(\varphi(x)/h)$. Then 
$w\in C^1_0(\hat S)$ with $w\equiv 1$ on $S_{\varphi}(x_0, h)=S$
and
$$\nabla w(x) = \frac{1}{h}\gamma^{'}(\varphi(x)/h) \nabla \varphi(x).$$
Therefore, 
\begin{eqnarray*}
\int_{\hat S} \Phi^{ij} w_i w_j dx&=&\frac{1}{h^2}\int_{\hat S} (\gamma^{'}(\varphi(x)/h))^2 \Phi^{ij} \varphi_i\varphi_j dx
\leq \frac{\|\gamma'\|^2_{\infty}}{h^2} \int_{\hat S}  \Phi^{ij} \varphi_i\varphi_j dx \\
&\leq& \frac{100}{h^2}\int_{\hat S} \langle \Phi \nabla (2h-\varphi(x)), \nabla (2h-\varphi (x))\rangle dx.
\end{eqnarray*}
Integrating by parts the last term and using $\sum_{i=1}^2\p_i \Phi^{ij}=0$ for all $j=1,2$, we obtain
\begin{eqnarray*}
\int_{\hat S} \Phi^{ij} w_i w_j dx
&\leq& - \frac{100}{h^2}\int_{\hat S} \div[\Phi \nabla (2h-\varphi(x))]  (2h-\varphi (x)) dx
= \frac{100}{h^2}\int_{\hat S} \trace[\Phi D^2 \varphi]  (2h-\varphi (x)) dx\\
&=&  \frac{100}{h^2}\int_{\hat S} 2(\det D^2\varphi) (2h-\varphi (x)) dx \leq \frac{400\Lambda}{h}|\hat S|\leq C(\lambda,\Lambda).
\end{eqnarray*}
In the last inequality, we used the upper bound on volume of sections in (\ref{vol-sec1}) to get $|\hat S|\leq C(\lambda,\Lambda)h$.
Therefore, (\ref{log_ineq}) now follows from (\ref{phivh}) and the above inequalities.

{\it Step 2: $L^{1+\kappa}$ estimate for $v$.} By Proposition \ref{G1_thm}  and the inequality
$v^q(x) \leq C(q) ( g^q_S(x, y)+ 1),$
together with the volume bound on $S$, we find that 
$v\in L^q(S)$ for all $q<\infty$ with the bound
 \begin{equation}\|v\|_{L^q(S)}\leq C(\lambda,\Lambda, q) h^{\frac{1}{q}}.
  \label{Glq}
 \end{equation}
Next, we use the following inequality
$\Phi^{ij} v_{i}(x) v_{j}(x)  \geq \frac{\det D^2 \varphi |\nabla v|^2}{\Delta \varphi}$
whose simple proof can be found in \cite[Lemma 2.1]{CG97}. 
It follows from (\ref{log_ineq}) that
$$\int_S \frac{|\nabla v|^2}{\Delta\varphi}\frac{1}{v^2}dx\leq C(\lambda,\Lambda).$$
Now, for all $1<p<2$, using the Holder inequality to $|\nabla v|^p= \frac{|\nabla v|^p}{(\Delta \varphi)^{\frac{p}{2}}} \frac{1}{v^p} \left((\Delta\varphi)^{\frac{p}{2}} v^p\right)$
with exponents $\frac{2}{p}$ and $\frac{2}{2-p}$, we have
\begin{eqnarray*}
\|\nabla v\|_{L^p(S)} \leq \left[\int_S \frac{|\nabla v|^2}{\Delta\varphi}\frac{1}{v^2}dx\right]^{\frac{1}{2}} \left(\int_S(\Delta \varphi)^{\frac{p}{2-p}} 
v^{\frac{2p}{2-p}}dx\right)^{\frac{2-p}{2p}} \leq C(\lambda,\Lambda) \|(\Delta\varphi)v^2\|_{L^{\frac{p}{2-p}}(S)}^{\frac{1}{2}}.
\end{eqnarray*}
Let $\e_\ast = \e_\ast(\lambda,\Lambda)>0$ be as in (\ref{w21est}).
Let us fix any $0<\e<\e_\ast$ and 
$$p = \frac{2(1+\e)}{2+\e},~\text{that is},~ \frac{p}{2-p}= 1+\e.$$
Thus, recalling $h_0\geq h$, Lemma \ref{DPFS_lem} and (\ref{Glq}), we obtain
\begin{eqnarray*}\|\nabla v\|_{L^p(S)} \leq \|(\Delta\varphi)v^2\|^{\frac{1}{2}}_{L^{1+\e}(S)}
&\leq& \|\Delta\varphi\|^{\frac{1}{2}}_{L^{1+\e_\ast}(S)} \|v\|_{L^{\frac{2(1+\e_\ast)(1+\e)}{\e_\ast-\e}}(S)}
\\&\leq& C(\lambda,\Lambda,\e,\e_\ast) \|\Delta\varphi\|^{\frac{1}{2}}_{L^{1+\e_\ast}(S_{\varphi}(x_0, h_0))}  h^{\frac{\e_\ast-\e}{2(1+\e_\ast)(1+\e)}}
\\&\leq& C(\lambda,\Lambda,\diam(\Omega), h_0)h^{\frac{\e_\ast-\e}{2(1+\e_\ast)(1+\e)}} .
\end{eqnarray*}
The proof of Proposition \ref{G2_thm} is complete by choosing
$\kappa = p-1= \frac{\e}{2+\e}$ and $\kappa_1= \frac{\e_\ast-\e}{2(1+\e_\ast)(1+\e)}.$
\end{proof}

\begin{proof}[Proof of Proposition \ref{sob_ineq}]
Suppose that $S_\varphi(x_0, 2)\subset\subset\Omega$ and $S_\varphi(x_0, 1)$ is normalized.
Set $S:=S_\varphi(x_0, 1)$.
Let $g_S(x,y)$ be Green's function of $S$ with respect to $L_\varphi:=-\p_j(\Phi^{ij}\p_i)=-\Phi^{ij}\p_{ij}$ with pole $y\in S$, that is, $g_S(\cdot, y)$ is a positive solution of \eqref{Green:S}. 
By Proposition \ref{G1_thm}, 
for any $q>1$,  there exists a constant $K>0$, depending on $q$, $\lambda$ and $\Lambda$, such that for every $y\in S$ we have
\begin{equation}\label{dist-est-Green2}
|\{x\in S: ~ g_S(x,y)> \tau \}|\leq K \tau^{-\frac{q}{2}}~ \text{ for all }\tau>0.
\end{equation}

As the operator $L_\varphi$ can be written in the divergence form with symmetric coefficients, we  infer from Gr\"uter-Widman \cite[Theorem~1.3]{GW}
that $g_S(x,y)=g_S(y,x)$ for all $x,y\in S$. This together with (\ref{dist-est-Green2})
 allows us to deduce that,
for every $x\in S$, there holds
\begin{equation*}
|\{y\in S: ~ g_S(x,y)> \tau \}|
=|\{y\in S: ~ g_S(y,x)>\tau \}| \leq K \tau^{-\frac{q}{2}}~  \text{for all } \tau >0.
\end{equation*}
Then, one can use Lemma 2.1 in Tian-Wang \cite{TiW08} to conclude \eqref{Sob:Phi}.
\end{proof}

\section{Proofs of Lemmas \ref{DPFS_lem} and \ref{res_eq_lem}}
\label{res_proof}

\begin{proof}[Proof of Lemma \ref{res_eq_lem}]
For $x\in \tilde S:=T^{-1}(S_\varphi(x_0, h))$, we have
$$D \tilde \varphi(x) = (\det A_h)^{-2/n} A_h^{t} D\varphi (Tx), ~D^2 \tilde \varphi(x) = (\det A_h)^{-2/n} A_h^{t}D^2\varphi(Tx) A_h.$$
and 
$$ D \tilde u (x)= A_h^{t} Du(Tx),~D^2 \tilde u(x) = A_h^{t}D^2 u (Tx)A_h.$$
In the variables $y:=Tx$ and $x\in \tilde S$, we have the relation $\nabla_x= A_h^{t}\nabla_y.$ Thus, letting $\langle, \rangle$ denote the inner product on $\R^n$, we have
\begin{eqnarray}
 \label{Fh}
 \nabla_x\cdot \tilde F(x)=\langle \nabla_x, \tilde F(x) \rangle= \langle A_h^{t}\nabla_y, (\det A_h)^{\frac{2}{n}} A_h^{-1} F(Tx) \rangle&=& \langle \nabla_y, (\det A_h)^{\frac{2}{n}} F(Tx) \rangle
 \nonumber\\&=&(\det A_h)^{\frac{2}{n}} (\nabla\cdot F)(Tx).
\end{eqnarray}
The cofactor matrix $\tilde \Phi= (\tilde \Phi^{ij}(x))_{1\leq i, j\leq n}$ of $D^2 \tilde \varphi(x)$ is related to $\Phi(Tx)$ and $A_h$ by
\begin{eqnarray}\tilde \Phi(x) = (\det D^2 \tilde \varphi(x))(D^2 \tilde \varphi(x))^{-1}&=&(\det D^2 \varphi(Tx)) (\det A_h)^{2/n} A_h^{-1} (D^2 \varphi(Tx))^{-1} (A_h^{-1})^{t}\nonumber\\ &=&
(\det A_h)^{2/n} A_h^{-1} \Phi(Tx) (A_h^{-1})^{t}. 
\label{tildeU}
\end{eqnarray}
Therefore, $$\tilde \Phi^{ij} \tilde u_{ij} (x)= \trace (\tilde \Phi (x)D^2\tilde u(x))= (\det A_h)^{2/n} \trace (\Phi(Tx) D^2 u(Tx))= (\det A_h)^{2/n}  \Phi^{ij} u_{ij}(Tx)$$
and hence, recalling (\ref{main_eq}) and (\ref{Fh}),
\begin{eqnarray*}
 \tilde \Phi^{ij} \tilde u_{ij} (x) = (\det A_h)^{2/n} (\nabla\cdot F)(Tx)= \nabla\cdot \tilde F=\div \tilde F(x)~\text{in}~\tilde S.
\end{eqnarray*}
Thus, we get (\ref{eqSh1}) as asserted. 

Next, we claim that
\begin{equation}
\dist(S_\varphi(x_0, h), \p S_\varphi(x_0, 2h)) \geq \frac{c(\lambda,\Lambda, n) h^{n/2}}{[\diam(\Omega)]^{n-1}}.
\label{dist_est_h}
\end{equation}
Indeed, let $\hat{\varphi}= \varphi-h$. Then, by our assumption that $\varphi|_{\p S_\varphi(x_0, h)}=0$, we have $\hat{\varphi}=0$ on $\p S_\varphi(x_0, 2h)$. Applying the Aleksandrov maximum principle (see \cite[Theorem 1.4.2]{G01}) to
$\hat{\varphi}$ on $S_\varphi(x_0, 2h)$, we have for any $x\in S_\varphi(x_0, h)$,
\begin{eqnarray*}h^n\leq |\hat{\varphi}(x)|^n&\leq& C(n) \dist(x,\p S_\varphi(x_0, 2h)) [\diam (S_\varphi(x_0, 2h))]^{n-1} \int_{S_\varphi(x_0, 2h)}\det D^2 \hat{\varphi} ~dx
\\&\leq& C(\Lambda, n)  \dist(x,\p S_\varphi(x_0, 2h)) [\diam (\Omega)]^{n-1} |S_\varphi(x_0, 2h)|
\\&\leq & C(\lambda, \Lambda, n)  \dist(x,\p S_\varphi(x_0, 2h)) [\diam (\Omega)]^{n-1} h^{n/2}
\end{eqnarray*}
where in the last inequality we used the volume estimates in (\ref{vol-sec1}). Thus, we obtain (\ref{dist_est_h}) as claimed.

Using (\ref{dist_est_h}) and the convexity of $\varphi$, we find that
\begin{equation}
\label{Dvar_est}
\|D\varphi\|_{L^{\infty}(S_\varphi(x_0, h))}\leq \frac{h}{\dist(S_\varphi(x_0, h), \p S_\varphi(x_0, 2h))} \leq C(\lambda,\Lambda, n,\diam(\Omega)) h^{1-\frac{n}{2}}.
\end{equation}
It follows that 
 $S_\varphi(x_0, h)\supset B(x_0, c_1h^{\frac{n}{2}})$ for some constant $c_1= c_1(n,\lambda,\Lambda,\diam(\Omega))$, which combined with (\ref{normSh}) implies that 
 \begin{equation} \|A_h^{-1}\|\leq C(n,\lambda,\Lambda,\diam(\Omega))h^{-\frac{n}{2}}.
  \label{Anorm}
 \end{equation}
On the other hand, by means of the volume estimates in (\ref{vol-sec1}), we find from (\ref{normSh}) that 
\begin{equation}
C(n,\lambda,\Lambda)^{-1} h^{n/2}\leq \det A_h
\leq C(n,\lambda,\Lambda) h^{n/2}.
\label{detAh}
\end{equation}
Hence (\ref{tFbound}) follows from (\ref{def:tildes}), (\ref{Anorm}) and (\ref{detAh}). 

Finally, using
$$\|\tilde u\|_{L^{q}(\tilde S)}=(\det A_h)^{-1/q} \|u\|_{L^q(S_{\varphi}(x_0, h))}$$
together with (\ref{detAh}), we obtain (\ref{tilde_u_Lq}).
\end{proof}
\begin{proof}[Proof of Lemma \ref{DPFS_lem}]
(i) Rescaling as in \eqref{def:tildes}, we have for all $x\in S_{\varphi}(x_0, h)$
$$D^2\varphi(x) = (\det A_h)^{\frac{2}{n}} (A_h^{-1})^t D^2 \tilde\varphi (T^{-1}x) A_h^{-1}.$$
Using the inequality
$\text{trace} (AB)\leq \|A\| \text{trace} (B)$
for nonnegative definite matrices $A, B$, we
thus have
$$\Delta\varphi (x)\leq \|A_h^{-1}\|^2  (\det A_h)^{\frac{2}{n}}\Delta\tilde\varphi(T^{-1}x).$$
By the $W^{2, 1+\e}$ estimate (\ref{w21est}) applied to $\tilde\varphi$ and its normalized section $\tilde S=T^{-1}(S_{\varphi}(x_0, h))$, we have
$$\|\Delta\tilde\varphi\|_{L^{1+\e_\ast}(\tilde S)}\leq C(n,\lambda,\Lambda)$$
for some $\e_\ast=\e_{\ast}(n,\lambda,\Lambda)>0$ depending only on $n,\lambda$ and $\Lambda$.
Using (\ref{Anorm}) and (\ref{detAh}), we find that
\begin{eqnarray*}
 \|\Delta\varphi\|_{L^{1+\e_\ast}(S_{\varphi}(x_0, h))} &\leq&  \|A_h^{-1}\|^2  (\det A_h)^{\frac{2}{n}} \|\Delta\tilde\varphi\circ T^{-1}\|_{L^{1+\e_\ast}(S_{\varphi}(x_0, h))} 
 \\&=&  \|A_h^{-1}\|^2  (\det A_h)^{\frac{2}{n}+\frac{1}{1+\e_\ast} }\|\Delta\tilde\varphi\|_{L^{1+\e_\ast}(\tilde S)}
 \\&\leq&
  C(\lambda,\Lambda, n,\diam(\Omega))h^{-n} h^{1+ \frac{n}{2} \frac{1}{1+\e_\ast}} \leq   C(\lambda,\Lambda, n,\diam(\Omega)) h^{-\alpha_2}
\end{eqnarray*}
where
$$\alpha_2=\alpha_2(\lambda,\Lambda, n)= n-1- \frac{n}{2} \frac{1}{1+\e_\ast}> 0.$$
(ii) Rescaling as in \eqref{def:tildes}, we have for $x\in S_{\varphi}(x_0, h)$
\begin{equation}D\varphi(x) = (\det A_h)^{\frac{2}{n}} (A_h^{-1})^t D \tilde\varphi (T^{-1}x).
 \label{DD1}
\end{equation}
Suppose that $x, y\in S_{\varphi}(x_0, h/2)$. Then $T^{-1}x, T^{-1}y\in S_{\tilde \varphi}(\tilde x_0, (\det A_h)^{-2/n} h)$. Applying 
the $C^{1,\alpha}$ estimate for the 
Monge-Amp\`ere equation, due to Caffarelli \cite{C1}, to $\tilde\varphi$, we have
\begin{equation}|D \tilde \varphi (T^{-1}x)-D \tilde \varphi (T^{-1}y)|\leq C(\lambda,\Lambda, n)|T^{-1}x-T^{-1}y|^{\alpha}.
 \label{DD2}
\end{equation}
where $\alpha=\alpha (n,\lambda,\Lambda) \in (0,1)$.
In terms of the function $\varphi$, we infer from (\ref{DD1}) and (\ref{DD2}) that 
\begin{equation}|D\varphi(x)-D\varphi(y)|\leq C(\lambda,\Lambda, n) (\det A_h)^{2/n} \|A_h^{-1}\|^{1+\alpha}|x-y|^{\alpha}.
 \label{DD3}
\end{equation}
Using the volume estimates (\ref{vol-sec1}), we obtain from (\ref{DD3}) and (\ref{Anorm})
the following estimate: 
$$|D\varphi(x)-D\varphi(y)|\leq C(\lambda,\Lambda, n, \diam(\Omega)) h^{-\alpha_1}\||x-y|^{\alpha}~\text{for all }x, y\in  S_{\varphi}(x_0, h/2) $$
where
$\alpha_1= -1+\frac{n}{2}(\alpha+1)>0.$
\end{proof}

{\bf Acknowledgments.} The author would like to thank the anonymous referee for the pertinent comments and the careful reading of the paper 
together with his/her constructive criticisms.

\bibliographystyle{plain}

\end{document}